\DeclarePairedDelimiter{\form}{\langle}{\rangle}
\DeclarePairedDelimiter{\floor}{\lfloor}{\rfloor}
\DeclarePairedDelimiter{\brac}{[}{]}
\DeclarePairedDelimiter{\abs}{\lvert}{\rvert}
\newcommand\ba{\begin{align*}}
\newcommand\ea{\end{align*}}
\newcommand\be{\begin{enumerate}}
\newcommand\ee{\end{enumerate}}
\newcommand\bp{\begin{proof}}
\newcommand\ep{\end{proof}}
\newcommand\bpp{\begin{prop}}
\newcommand\epp{\end{prop}}
\newcommand\bpb{\begin{prob}}
\newcommand\epb{\end{prob}}
\newcommand\bd{\begin{defn}}
\newcommand\ed{\end{defn}}
\newcommand\bh{\begin{hint}}
\newcommand\eh{\end{hint}}
\newcommand\bN{\mathbb{N}}
\newcommand\N{\mathbb{N}}
\newcommand\bR{\mathbb{R}}
\newcommand\bQ{\mathbb{Q}}
\newcommand\bZ{\mathbb{Z}}
\newcommand\Hom{\operatorname{Hom}}
\newcommand\SO{\operatorname{SO}}
\newcommand\Id{\operatorname{Id}}
\DeclareMathOperator\Homeo{Homeo}
\newcommand\sse{\subseteq}
\newcommand\co{\colon}
\DeclareMathOperator\Diff{Diff}
\newcommand\rot{\operatorname{rot}}
\renewcommand{\MR}[1]
{\href{http://www.ams.org/mathscinet-getitem?mr=#1}{MR#1}}
\def\thetitle{{Integrability of moduli and regularity of Denjoy counterexamples}}
\def\theauthors{{Sang-hyun Kim and Thomas Koberda}}
\theoremstyle{theorem}
\newtheorem{thm}{Theorem}[section]
\newtheorem{lem}[thm]{Lemma}
\newtheorem{cor}[thm]{Corollary}
\newtheorem{prop}[thm]{Proposition}
\newtheorem{que}[thm]{Question}
\newtheorem*{claim*}{Claim}
\theoremstyle{remark}
\newtheorem{exmp}[thm]{Example}
\newtheorem{rem}[thm]{Remark}
\theoremstyle{definition}
\newtheorem{defn}[thm]{Definition}
\newtheorem{prob}{Problem}[section]
\begin{document}
\title\thetitle
\date{\today}
\keywords{Exceptional circle diffeomorphism, moduli of continuity, length spectrum}
\subjclass[2010]{Primary: 37E10; Secondary: 37C05, 37C15, }

\author[S. Kim]{Sang-hyun Kim}
\address{School of Mathematics, Korea Institute for Advanced Study (KIAS), Seoul, 02455, Korea}
\email{skim.math@gmail.com}
\urladdr{http://cayley.kr}

\author[T. Koberda]{Thomas Koberda}
\address{Department of Mathematics, University of Virginia, Charlottesville, VA 22904-4137, USA}
\email{thomas.koberda@gmail.com}
\urladdr{http://faculty.virginia.edu/Koberda}

\begin{abstract}
We  study the regularity of exceptional actions of groups by $C^{1,\alpha}$ diffeomorphisms on the circle, i.e. ones
which admit exceptional minimal sets, and whose elements have first derivatives that are continuous with
concave modulus of continuity $\alpha$.
Let $G$ be a finitely generated group admitting a $C^{1,\alpha}$
action $\rho$ with a free orbit on the circle, and such that the logarithms of derivatives of group elements
are uniformly bounded at some point of the circle.
We prove that if $G$ has spherical growth bounded by $c n^{d-1}$ and if the function $1/\alpha^d$ is integrable near zero,
then under some mild technical assumptions on $\alpha$, there is a sequence of exceptional $C^{1,\alpha}$ actions of $G$
which converge to $\rho$ in the $C^1$ topology. As a consequence for a single diffeomorphism, we obtain that
if the function $1/\alpha$ is integrable near zero, then there exists a $C^{1,\alpha}$ exceptional
diffeomorphism of the circle. This corollary accounts for all previously known moduli of continuity for derivatives of
 exceptional diffeomorphisms. 
 We also obtain a partial converse to our main result.
 For finitely generated free abelian groups, the existence of an exceptional
 action, together with some natural hypotheses on the derivatives of group elements, puts integrability restrictions on the modulus
 $\alpha$. These results are 
related to a long-standing question of D. McDuff concerning the length spectrum of exceptional $C^1$ diffeomorphisms of the circle.
\end{abstract}

\maketitle

\section{Introduction}

Let $f\in\Homeo^+(S^1)$ be an orientation preserving homeomorphism of the circle without any periodic points. It is well-known that in this
case $f$ has an irrational rotation number $\theta$. Here, we make the identification $S^1=\bR/\bZ$, and we have the rotation number
\[\mathrm{rot}(f)=\lim_{n\to\infty}\frac{F^n(x)-x}{n}\pmod 1\in\bR/\bZ,\] where $F$ is any lift of $f$ to $\bR$ and $x\in\bR$ is arbitrary. It is well-known
that $\mathrm{rot}(f)$ is independent of $x$ and of the choice of a lift.

In this introduction, we shall always assume that $f\in\Homeo^+(S^1)$ has irrational rotation number $\theta$
unless otherwise noted. A standard
fact going back to Poincar\'e
asserts that if $f$ has a dense orbit, then $f$ is topologically conjugate to an irrational rotation by $\theta$.
If $f$ does not have a dense orbit then
it must have a \emph{wandering interval}, which is to say a nonempty interval $J$ such that $f^n(J)\cap J=\varnothing$ for $n\neq 0$.

A classical result of Denjoy asserts that if $f$ is twice differentiable (or if in fact the logarithm of the
derivative of $f$ has bounded variation, equivalently just the derivative of $f$ has bounded variation),
then $f$ is topologically conjugate to a rotation by $\theta$. In lower levels of regularity, this fact
ceases to hold. It is easy to produce continuous examples which are not topologically conjugate to a rotation, and Denjoy showed that one
can construct differentiable examples for every irrational rotation number $\theta$.
Examples of this type were also  know to Bohl~\cite{Bohl1916}.
Such examples will be called \emph{exceptional} diffeomorphisms, since they have a so-called
exceptional minimal set, which in this case will be homeomorphic to a Cantor set (see Theorem 2.1.1 of~\cite{Navas2011}).
In general, an \emph{exceptional group action} is one which admits an exceptional minimal set.
In what follows, we will consider not just single diffeomorphisms, but also non-cyclic groups acting with exceptional minimal sets on the circle,
and the regularity which can be required of such actions. 

\subsection{Main results}
In this paper, we consider the problem of determining which moduli of continuity can be imposed on the derivatives of elements
in exceptional diffeomorphism groups of the circle. 
 
A \emph{(concave) modulus of continuity} (or, \emph{concave modulus}) is a homeomorphism
\[\alpha\colon[0,\infty)\to[0,\infty)\]
that is concave as a map. 
We say a function $g\colon S^1\to\bR$ is \emph{$\alpha$--continuous} if it satisfies \[\sup_{x\ne y} \frac{|g(x)-g(y)|}{\alpha(|x-y|)}<\infty.\] Here, we interpret $|x-y|$ by identifying $S^1$ with $\bR/\bZ$
and computing this difference modulo $1$.
The value of this supremum
is sometimes called the $\alpha$--norm of $g$, and is written $[g]_{\alpha}$. 
Conversely, every continuous map $g$ on $S^1$ is $\alpha$--continuous for some concave modulus $\alpha$; see~\cite{CKK2019} for instance.

A $C^1$--diffeomorphism $f\co S^1\to S^1$ is said to be $C^{1,\alpha}$ if $f'$ is $\alpha$--continuous. One commonly denotes by $\Diff_+^{1,\alpha}(S^1)$ the set of orientation--preserving $C^{1,\alpha}$ diffeomorphisms of $S^1$.
It follows from the concavity of $\alpha$ that $\Diff_+^{1,\alpha}(S^1)$ is a group.

Note that if $\alpha(x)=x$, then $\alpha$--continuity is just Lipschitz continuity. More generally, if $\alpha(x)=x^{\tau}$ for some $0<\tau\leq 1$
then $\alpha$--continuity is $\tau$--H\"older continuity. For every $0<\tau< 1$, it is known that there exist $C^{1,\tau}$ exceptional
diffeomorphisms of the circle for arbitrary $\theta$ (see~\cite{Herman1979}, also Theorem 3.1.2 of~\cite{Navas2011}). In fact,
Herman~\cite{Herman1979} proved that for arbitrary $\theta$ there exist $C^{1,\alpha}$ exceptional diffeomorphisms of the circle for
\[\alpha(x)=x(\log{1/x})^{1+\epsilon}\] for all $\epsilon>0$, which implies the corresponding conclusion for H\"older moduli. Moreover,
he proved that such diffeomorphisms can be chosen arbitrarily $C^1$--close to a rotation by $\theta$, and with
uniformly bounded $C^{1,\alpha}$--norms.
On the other hand, he showed that  if $\theta$ satisfies a certain strong Diophantine hypothesis then for
\[\alpha(x)=x\left(\log\log\log{1/x}\right)^{1-\epsilon},\]
every $f\in\Diff_+^{1,\alpha}(S^1)\cap\rot^{-1}(\theta)$ is topologically conjugate to a rotation~\cite[X.4.4]{Herman1979}.
It is a well-known open problem to determine whether or not there exist $C^{1,\alpha}$ exceptional diffeomorphisms
for \[\alpha(x)=x\log{1/x}.\]

Before stating our results, we introduce some notation and terminology.
For a single diffeomorphism $f$, we consider a maximal wandering set of open intervals for $f$ in the circle and write them
as $\{J_i\}_{i\in\bZ}$. These intervals are characterized by
 the property that $f(J_i)=J_{i+1}$, and that any interval properly containing $J_i$ for any index
$i$ must meet the exceptional minimal set of $f$. We will write $\ell_i$ for the length of the interval $J_i$, and we will
oftentimes refer to the collection
$\{J_i\}_{i\in\bZ}$ as a \emph{maximal wandering set}. Observe that a maximal wandering set may not be unique. An exceptional
diffeomorphism of the circle will necessarily have a wandering interval.
The discussion in this paragraph also generalizes to arbitrary group actions.
An exceptional action $\rho$ necessarily admits a wandering interval,
which in this case is an interval $J$ such that every pair of intervals in $\{g J\}_{g\in G}$ are either equal or disjoint. 

\begin{thm}\label{t:c1al-intro}
Let $d\in\bN$, and let $\alpha$ be a concave modulus satisfying 
\[
\int_0^1 1/\alpha(x)^ddx<\infty.\]
If $d>1$, we further assume \[\sup_{0<y<1}  \alpha\!\left( y^{d+1}/\alpha(y)^d\right)/y<\infty.\]
Suppose that a  finitely generated group $G$ admits an action
$\rho\co G\to \Diff_+^{1,\alpha}(S^1)$ with a free orbit $\rho(G).0$ such that  \[\sup_{g\in G} \abs*{\log g'(0)}<\infty.\]
Suppose furthermore that the spherical growth function of $G$ is at most $c n^{d-1}$ for some $c>0$.
Then $\rho$ admits a sequence of exceptional actions
\[\{\rho_k\co G\to\Diff_+^{1,\alpha}(S^1)\}\]
such that $\rho_k\to \rho$ in the $C^1$--topology. Moreover, we may require that
\[\sup_k\{[\rho_k(g)]_{\alpha}\}<\infty\] for all $g\in G$.
\end{thm}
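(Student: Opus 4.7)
The plan is a Denjoy-type blow-up of the free orbit $\rho(G).0$, generalized to the group setting. I would replace each orbit point $g\cdot 0$ by an open interval $J_g$ of carefully chosen length $\ell_g$, producing a Cantor set $K_k\subset S^1$ whose complement is $\bigsqcup_{g\in G}J_g$. The exceptional action $\rho_k$ would then be defined so that it agrees with $\rho$ on $K_k$ via the semiconjugacy $\pi_k\co S^1\to S^1$ that collapses each $J_g$ to $g\cdot 0$; on each $J_h$ the map $\rho_k(g)$ is a diffeomorphism $J_h\to J_{gh}$ whose derivative at the endpoints matches what compatibility with $\rho(g)$ forces, and interpolates smoothly in between via an explicit bump-function construction.

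The length choice: I would take $\ell_g = \epsilon_k\,\psi(|g|)$ for a decreasing function $\psi$ shaped by $\alpha$ and $d$, with a parameter $\epsilon_k\to 0$ indexing the sequence. The spherical growth bound forces $\sum_g \ell_g\le c\,\epsilon_k\sum_n n^{d-1}\psi(n)$, which must converge for $\pi_k$ to be well-defined. The integrability $\int_0^1 1/\alpha(x)^d\,dx<\infty$ prescribes the right shape of $\psi$: it is the one that converts the required $\alpha$-continuity estimates for $\rho_k(g)'$ into exactly this integral, via a change of variables relating word length in $G$ to scale in the Cantor set. Since $\epsilon_k\to 0$ makes the intervals shrink uniformly, $\pi_k\to\Id$ in $C^0$, and granted the derivative estimates, $\rho_k\to\rho$ in $C^1$.

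The technical core is verifying that $\rho_k(g)'$ is $\alpha$-continuous with $\alpha$-norm bounded uniformly in $k$ for each fixed $g$. Inside a single $J_h$ this is straightforward with a suitable bump extension. The delicate case is when $x,y$ lie in different intervals or straddle the Cantor set: the segment $(x,y)$ meets many $J_h$'s at widely varying scales, and after invoking the chain rule together with $\sup_g|\log g'(0)|<\infty$ to bound derivatives of $\rho(g)$ along the orbit, the oscillation $|\rho_k(g)'(x)-\rho_k(g)'(y)|$ is controlled by a sum over $h$ of contributions local to each $J_h\subset(x,y)$. Grouping by $|h|=n$ turns this into a series $\sum_n n^{d-1}\cdot(\text{oscillation at scale }\psi(n))$, which the integrability hypothesis bounds after a change of variables; the auxiliary condition $\sup_y\alpha(y^{d+1}/\alpha(y)^d)/y<\infty$ for $d>1$ is what correctly matches the scale $\psi(n)$ to the relevant argument of $\alpha$ in this sum. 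This scale-matching estimate, uniform in both $g$ and $k$, is where I expect the main difficulty to lie.
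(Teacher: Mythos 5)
Your overall strategy---blowing up the free orbit into intervals whose lengths are shaped by $\alpha$, $d$, and the spherical growth, letting $\rho_k$ agree with $\rho$ on the resulting minimal set via the collapsing map, and interpolating by bumps inside the intervals---is the same as the paper's (Proposition~\ref{p:c0}, Theorem~\ref{t:c1}, Corollary~\ref{c:stab0}, Theorem~\ref{t:c1al}). However, two of your concrete choices would fail as stated. First, the lengths must carry the weight $g'(0)$: the paper takes $\ell^k_{g(0)}=g'(0)/\nu(\|g(0)\|_0+k)$ with $\nu(x)=x^{d+1}\alpha(1/x)^d$, and the hypothesis $\sup_g|\log g'(0)|<\infty$ is used precisely so that this weighting keeps $\sum_y \ell^k_y$ small---not, as you suggest, to bound derivatives in the oscillation estimate. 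With your unweighted choice $\ell_g=\epsilon_k\psi(|g|)$, the mean derivative of $\rho_k(s)$ on $J_h$ is $\ell_{sh}/\ell_h\to 1$, while the endpoint derivative forced by compatibility is $s'(h\cdot 0)=(sh)'(0)/h'(0)$, which is bounded away from $0$ and $\infty$ but in general not close to $1$; the resulting derivative oscillation of size about $|s'(h\cdot 0)-1|$ on intervals of length tending to $0$ destroys both the $C^1$--convergence $\rho_k\to\rho$ and any bound on $[\rho_k(s)']_\alpha$. With the weighted lengths, the chain rule gives $s'(y)\ell^k_y/\ell^k_{sy}=\nu(\|sy\|_0+k)/\nu(\|y\|_0+k)$, and the whole problem reduces to the uniform boundedness of $A(s,k,y)=\alpha(\ell^k_y)^{-1}\,|1-\ell^k_{sy}/(s'(y)\ell^k_y)|$, where the extra hypothesis for $d>1$ enters. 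Second, the multiplicative scaling $\epsilon_k\to 0$ cannot deliver the ``moreover'' clause $\sup_k[\rho_k(g)]_\alpha<\infty$: for a fixed $h$ the forced oscillation on $J_h$ is independent of $\epsilon_k$, while $\alpha(\ell_h)=\alpha(\epsilon_k\psi(|h|))\to 0$, so $[\rho_k(s)']_\alpha\to\infty$. The paper instead shrinks the intervals by an index shift, $\nu(\|y\|_0+k)$, which keeps $A(s,k,y)$ bounded uniformly in $k$.

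Also, the ``delicate case'' you anticipate---summing local contributions over all $J_h$ between $x$ and $y$, grouped by word length---is not needed, and I doubt it comes out cleanly; the integrability $\int_0^1 1/\alpha^d<\infty$ is used to make the lengths summable, not to control such a sum. In the blow-up construction the derivative of $\rho_k(s)$ on the minimal set is exactly $s'\circ H_k$, and the collapsing map $H_k$ expands distances by at most $(1-\sum_y\ell^k_y)^{-1}\le 2$, so for $u,v$ in the minimal set the oscillation is at most $2[s']_\alpha\,\alpha(|u-v|)$ by concavity; an arbitrary pair is then handled by splitting into at most three pieces (a piece in the first interval, a piece between minimal-set points, a piece in the last interval), each controlled either by this pullback estimate or by the uniform bound on $A(s,k,y)$ together with monotonicity of $x/\alpha(x)$. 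If you repair the two points above and replace your cross-interval summation by this observation, your outline matches the paper's proof.
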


In the statement of Theorem~\ref{t:c1al-intro}, the spherical growth function
of a finitely generated group denotes the number of elements of  length exactly $n$ with respect to some
 generating set. We remark that finite $G$--orbits are allowed under the hypotheses of the theorem.
 
 For a single diffeomorphism (i.e. $G=\bZ$), we have the following immediate consequence  of Theorem~\ref{t:c1al-intro}:

\begin{thm}\label{t:int-alpha}
Let $\theta$ be arbitrary, and suppose that we have \[\int_0^1\frac{1}{\alpha(x)}\,dx<\infty.\]
Then there exists a $C^{1,\alpha}$ exceptional diffeomorphism $f$ of $S^1$ with rotation number $\theta$. Moreover, we may arrange for $f$ to be arbitrarily $C^{1}$--close to a rotation by $\theta$.
\end{thm}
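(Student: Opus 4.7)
The plan is to apply Theorem~\ref{t:c1al-intro} to the cyclic group $G = \mathbb{Z}$, with the representation $\rho \colon \mathbb{Z} \to \Diff_+^{1,\alpha}(S^1)$ defined by sending the generator $1$ to the rotation $R_\theta$. First I would verify the hypotheses with $d = 1$. The group $\mathbb{Z}$ has spherical growth bounded by $2 = c n^{d-1}$ with $d=1$, and the auxiliary condition $\sup_{0<y<1}\alpha(y^{d+1}/\alpha(y)^d)/y<\infty$ is required only when $d>1$, so it is vacuous. The integrability hypothesis $\int_0^1 1/\alpha(x)^d\,dx<\infty$ reduces to the hypothesis $\int_0^1 1/\alpha(x)\,dx<\infty$ of the theorem. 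Since $\theta$ is irrational, $\rho(\mathbb{Z}).0 = \{n\theta \bmod 1\}$ is a free orbit, and every element of $\rho(\mathbb{Z})$ is a rigid rotation, so $g'(0) = 1$ for all $g \in \rho(\mathbb{Z})$ and $\sup_g|\log g'(0)| = 0$.

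Next, by Theorem~\ref{t:c1al-intro}, I would obtain a sequence of exceptional actions $\rho_k \colon \mathbb{Z} \to \Diff_+^{1,\alpha}(S^1)$ converging to $\rho$ in the $C^1$ topology. Setting $f_k := \rho_k(1)$, each $f_k$ is then a $C^{1,\alpha}$ diffeomorphism admitting an exceptional minimal set, with $f_k \to R_\theta$ in $C^1$. In particular, for $k$ sufficiently large $f_k$ is arbitrarily $C^1$-close to rotation by $\theta$, as desired.

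The one subtle point, which I would flag as the main obstacle, is that Theorem~\ref{t:c1al-intro} guarantees only $C^1$-convergence $\rho_k \to \rho$, and a priori merely forces $\rot(f_k) \to \theta$ by continuity of the rotation number in the $C^0$-topology, not equality. To obtain $\rot(f_k) = \theta$ exactly, I would appeal to the construction underlying Theorem~\ref{t:c1al-intro}: the standard procedure is to blow up the free orbit $\{n\theta\}_{n\in\bZ}$ by inserting a summable family of wandering intervals and to extend the dynamics $C^{1,\alpha}$-smoothly across the new Cantor set. The collapsing map $S^1 \to S^1$ that crushes each wandering interval to a point then intertwines $f_k$ with $R_\theta$, so $f_k$ is semiconjugate to $R_\theta$ and consequently has rotation number exactly $\theta$. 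Granted this feature of the construction, the $f_k$ are the required $C^{1,\alpha}$ exceptional diffeomorphisms with rotation number $\theta$, arbitrarily $C^1$-close to $R_\theta$.
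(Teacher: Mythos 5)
Your proposal is correct and follows essentially the same route as the paper, which deduces Theorem~\ref{t:int-alpha} from Theorem~\ref{t:c1al-intro} (in its body form, Theorem~\ref{t:c1al}) with $G=\bZ$, $d=1$, applied to the rotation action. Your worry about the rotation number is resolved exactly as you suggest: the body statement produces the $\rho_k$ as nontrivial blow-ups of $\rho$ at the orbit, hence semi-conjugate to $\rho$, and the paper notes that semi-conjugacy preserves rotation numbers (the appendix also gives an independent, self-contained construction, but that is an alternative rather than the main-text proof).
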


Theorem~\ref{t:int-alpha} recovers all previously known possible moduli of continuity for derivatives of
 exceptional diffeomorphisms of the circle.
In particular, Theorem~\ref{t:int-alpha} recovers the fact
that there are $C^{1,\tau}$ exceptional diffeomorphisms for every irrational rotation number for
all $0<\tau<1$, as well as Herman's corresponding result for $\alpha(x)=x(\log{1/x})^{1+\epsilon}$. Moreover, we can immediately assert the existence
of exceptional $C^{1,\alpha}$ diffeomorphisms for moduli which were previously unrecorded in the literature, such as
\[\alpha(x)=x(\log{1/x})\left(\log\log{1/x}\right)^{1+\epsilon}.\]

The next result is an attempt to control the integrability properties of the modulus, given the existence of an exceptional action. In the
statement, a semi-conjugacy is a monotone, surjective, degree one map which
intertwines two  actions of a group; see Section~\ref{sec:blowup}
below.

\begin{thm}\label{t:al-inv-intro}
Let $\alpha$ be a concave modulus, and let 
\[\rho\co \bZ^d\to \Diff_+^{1,\alpha}(S^1)\]
be an exceptional action. 
Suppose that $\rho$ is semi-conjugate to a faithful action 
\[\bar\rho\co\bZ^d\to\Diff_+^1(S^1)\]
by a semi-conjugacy map  $H$ such that $\bar\rho$ is $C^1$--conjugate into $\SO(2,\bR)$.
 Assume for each generator $s$ of $G$ and $x$ in the exceptional minimal set of $\rho$ we have that
\[ \rho(s)'(x) = \bar\rho(s)'\circ H(x).\]
Then we have that  \[\int_0^1 \alpha^{-1}(t)/t^{d+1}\;dt<\infty.\]
\end{thm}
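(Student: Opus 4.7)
The plan is to combine the $\alpha$-regularity of the generators of $\rho$ with the polynomial growth of $\bZ^d$ in order to convert the summability $\sum_g \ell_g \le 1$ of wandering-interval lengths into an integrability condition on $\alpha^{-1}$.

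First, I would set up the problem by writing $\bar\rho = h R h^{-1}$ for $h\in\Diff_+^1(S^1)$ and $R\co\bZ^d\to\SO(2,\bR)$ a faithful rotation action, and replacing $H$ by $H':=h^{-1}\circ H$, which now semi-conjugates $\rho$ to $R$. Differentiating $\bar\rho = hRh^{-1}$ (using $R'\equiv 1$) gives, for each generator $s$ and $x\in K$,
\[
\rho(s)'(x)=\frac{h'(R(s)\,H'(x))}{h'(H'(x))},
\]
and the cocycle identity extends this formula to all $g\in\bZ^d$. Because $h'$ is continuous and positive on the compact circle, these derivatives are uniformly bounded above and away from zero, independent of $g$ and $x$.

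Second, I would extract a length recursion. Fixing a wandering orbit $\{J_g\}_{g\in\bZ^d}$ with $\ell_g=|J_g|$, the endpoints $a_g,b_g\in K$ of $J_g$ share a common image $p_g=H'(a_g)=H'(b_g)=R(g)p_e$. Setting $\Lambda(g):=h'(p_g)$, the formula yields $\rho(s)'(a_g)=\rho(s)'(b_g)=\Lambda(gs)/\Lambda(g)$, so integrating $\rho(s)'$ over $J_g$ and invoking $\alpha$-continuity gives
\[
\left|\ell_{gs}-\frac{\Lambda(gs)}{\Lambda(g)}\ell_g\right|\le [\rho(s)']_\alpha\,\ell_g\,\alpha(\ell_g).
\]
Renormalising by $\tilde\ell_g:=\ell_g/\Lambda(g)$ absorbs the telescoping factor, and concavity plus $\alpha(\Lambda(g)\tilde\ell_g)\le\max(1,\Lambda(g))\alpha(\tilde\ell_g)$ yields
\[
|\tilde\ell_{gs}-\tilde\ell_g|\le C\,\alpha(\tilde\ell_g)\,\tilde\ell_g,
\]
while $\tilde\ell_g\asymp\ell_g$ preserves $\sum_g\tilde\ell_g = O(1)$.

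Third, I would introduce the potential $\Psi(\ell):=\int_\ell^1 ds/(s\,\alpha(s))$, decreasing with $\Psi(\ell)\to\infty$ as $\ell\to 0$. Concavity of $\alpha$ (so $\alpha(\ell/2)\ge\alpha(\ell)/2$) converts the one-step estimate into $|\Psi(\tilde\ell_{gs})-\Psi(\tilde\ell_g)|\le C'$ whenever $C\alpha(\tilde\ell_g)\le 1/2$, which holds for all but finitely many $g$. Iterating along a shortest word representing $g$, $\Psi(\tilde\ell_g)\le C''(|g|+1)$, so $\tilde\ell_g\ge\Psi^{-1}(C''(|g|+1))$, and combining with the sphere count $\#\{|g|=n\}\asymp n^{d-1}$ in $\bZ^d$,
\[
\infty>\sum_g\tilde\ell_g\gtrsim\sum_{n\ge 1}n^{d-1}\,\Psi^{-1}(C''n).
\]
The substitution $u=\Psi(\ell)$ transforms this finiteness into $\int_0^1\Psi(\ell)^{d-1}/\alpha(\ell)\,d\ell<\infty$. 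Using concavity of $\alpha$ once more to obtain $\Psi(\ell)\asymp 1/\alpha(\ell)$ (the integrand is controlled by its lower endpoint), this becomes $\int_0^1 d\ell/\alpha(\ell)^d<\infty$, and changing variable $t=\alpha(\ell)$ and integrating by parts (with vanishing boundary $\ell/\alpha(\ell)^d\to 0$, automatic once the integral converges) delivers the claimed $\int_0^1\alpha^{-1}(t)/t^{d+1}\,dt<\infty$.

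The hard part will be the final analytic chain in the third step: establishing $\Psi(\ell)\asymp 1/\alpha(\ell)$ for general concave $\alpha$ and verifying the boundary vanishing $\ell/\alpha(\ell)^d\to 0$, both of which rely on careful use of concavity and the homogeneity-like estimates it provides. A secondary technicality is managing the finitely many ``bad'' $g$ where the one-step potential estimate fails; those contribute only $O(1)$ to $\Psi(\tilde\ell_g)$ and hence do not affect the asymptotic $O(|g|)$ bound used in the summation.
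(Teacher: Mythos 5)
Your proposal is correct in substance, and it shares the paper's skeleton: the hypothesis $\rho(s)'(x)=\bar\rho(s)'\circ H(x)$ identifies the endpoint derivative with a coboundary $\Lambda(gs)/\Lambda(g)$ of the conjugating diffeomorphism, $\alpha$--continuity gives the one-step estimate $\abs{\ell_{gs}-\rho(s)'(a_g)\ell_g}\le[\rho(s)']_\alpha\,\ell_g\alpha(\ell_g)$ (the paper phrases this via the Mean Value Theorem), the renormalization $\tilde\ell_g=\ell_g/\Lambda(g)$ is exactly the paper's $L_y=\ell_y/\bar\rho(g)'(0)$ up to a constant, and the endgame is the same summation against the sphere count $n^{d-1}$ followed by a change of variables. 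Where you genuinely diverge is the middle step: the paper proves by induction on $\|y\|_0$, using the function $h(x)=x(1-C\alpha(x))$ and concavity, that $\alpha(\ell_y)\ge A/\|y\|_0$, i.e.\ $\ell_y\ge\alpha^{-1}(A/\|y\|_0)$, which feeds directly into the target integral; you instead run a Lyapunov/potential argument with $\Psi(\ell)=\int_\ell^1 ds/(s\alpha(s))$, showing bounded increments and hence $\tilde\ell_g\ge\Psi^{-1}(O(\|g\|))$. Your bound is in fact at least as strong as the paper's (since $\Psi(\ell)\gtrsim 1/\alpha(\ell)$, one has $\alpha^{-1}(A/n)\lesssim\Psi^{-1}(cn)$), and it avoids the delicate choice of the constant $A$ in the induction; the price is the extra analytic step converting $\sum_n n^{d-1}\Psi^{-1}(C''n)<\infty$ back into the stated integral, which the paper's ansatz gets for free.

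One correction to that extra step: the two-sided comparison $\Psi(\ell)\asymp 1/\alpha(\ell)$, which you flag as the hard part, is false for general concave moduli — for $\alpha(x)=1/\log(1/x)$ one has $\Psi(\ell)\asymp(\log 1/\ell)^2$ while $1/\alpha(\ell)\asymp\log 1/\ell$. Fortunately you only need the true direction: $\Psi(\ell)\ge\int_\ell^{2\ell}\frac{ds}{s\alpha(s)}\ge\frac{1}{2\alpha(2\ell)}\ge\frac{1}{4\alpha(\ell)}$, which gives $\Psi(\ell)^{d-1}/\alpha(\ell)\gtrsim 1/\alpha(\ell)^d$ and hence $\int_0^1 d\ell/\alpha(\ell)^d<\infty$ from your finiteness statement. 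The final conversion is then cleanest via Tonelli, $\int_0^T\frac{\alpha^{-1}(t)}{t^{d+1}}dt=\frac1d\int_0^{\alpha^{-1}(T)}\bigl(\alpha(\ell)^{-d}-T^{-d}\bigr)d\ell$, which shows the two integrability conditions are equivalent and sidesteps the boundary term (your boundary argument $\ell/\alpha(\ell)^d\le\int_0^\ell ds/\alpha(s)^d\to0$ is also fine). The remaining technicality you mention — finitely many $g$ with $C\alpha(\tilde\ell_g)>1/2$ — is handled as you suggest, by restarting the potential estimate after the last bad element of a geodesic, at the cost of an additive constant.
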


The next result is a partial converse to Theorem~\ref{t:int-alpha}, which follows easily from Theorem~\ref{t:al-inv-intro} in the case of
a  single diffeomorphism.

\begin{thm}\label{t:alpha-inv}
Suppose $f$ is an exceptional $C^{1,\alpha}$ diffeomorphism, and suppose that $\ell_{i+1}/\ell_i\to 1$ as $i\to\infty$. Then there is a positive
constant $A>0$ such that \[\frac{A}{i}\leq \alpha(\ell_i)\] for all $i>0$.
\end{thm}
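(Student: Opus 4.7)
The plan is to reduce to Theorem~\ref{t:al-inv-intro} applied to the cyclic group $G = \bZ$ generated by $f$, with $d = 1$. The requisite semi-conjugacy is automatic: since $f$ has irrational rotation number $\theta$, Poincar\'e's theorem gives a monotone degree-one map $H\co S^1 \to S^1$ with $H \circ f = R_\theta \circ H$, where $R_\theta$ is rigid rotation by $\theta$. Because $R_\theta \in \SO(2,\bR)$, the $C^1$-conjugacy hypothesis holds trivially, and the derivative identity $\rho(s)'(x) = \bar\rho(s)'(H(x))$ for $x$ in the exceptional minimal set $K$ collapses to $f'(x) = 1$ on $K$.

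To verify $f'|_K \equiv 1$, I would use the mean value theorem: since $f\co J_i \to J_{i+1}$ is a diffeomorphism, there exists $\xi_i \in J_i$ with $f'(\xi_i) = \ell_{i+1}/\ell_i \to 1$. Using $\alpha$-continuity of $f'$ together with $|J_i| = \ell_i \to 0$,
\[
\sup_{x \in \overline{J_i}}|f'(x) - 1| \;\le\; |f'(\xi_i) - 1| + [f']_\alpha\,\alpha(\ell_i) \;\longrightarrow\; 0.
\]
Since the endpoints of the wandering intervals are dense in $K$ and $f'$ is continuous, this gives $f'|_K \equiv 1$, so every hypothesis of Theorem~\ref{t:al-inv-intro} is satisfied.

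To finish, I would appeal to the \emph{proof}, rather than merely the statement, of Theorem~\ref{t:al-inv-intro}: in the $d = 1$ case that argument should produce as an intermediate step precisely the pointwise comparison $\alpha(\ell_i) \ge A/i$, from which the stated integral conclusion $\int_0^1 \alpha^{-1}(t)/t^2\,dt < \infty$ is then recovered by aggregating against the packing constraint $\sum_i \ell_i \le 1$ and converting $\sum_i \alpha^{-1}(A/i)$ to an integral. The principal obstacle is precisely that the stated integrability is strictly weaker than the pointwise bound we are after, so Theorem~\ref{t:al-inv-intro} cannot be invoked as a black box. The hypothesis $\ell_{i+1}/\ell_i \to 1$ carries two burdens: it produces $f'|_K \equiv 1$ as above, and, via the slow variation it imposes on $(\ell_i)$, it is what keeps the pointwise extraction from integrability meaningful — without it, one would be forced to settle for the integral conclusion alone, which is why the theorem is stated only as a \emph{partial} converse to Theorem~\ref{t:int-alpha}.
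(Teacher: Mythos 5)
Your proposal is correct and follows essentially the paper's own route: the paper likewise deduces Theorem~\ref{t:alpha-inv} by first getting $f'\equiv 1$ on the minimal set from the hypothesis $\ell_{i+1}/\ell_i\to 1$ (this is exactly your mean value theorem plus $\alpha$--continuity plus density argument, which is the proof of the Fundamental Estimate, Lemma~\ref{lem:fundamental}) and then citing the claim inside the proof of Theorem~\ref{t:al-inv}, namely the pointwise bound $\alpha(\ell_y)\ge A/\|y\|_0$, specialized to $G=\bZ$, $d=1$, $\bar\rho$ a rotation. Your one leap of faith --- that the proof of Theorem~\ref{t:al-inv-intro} yields the pointwise comparison as an intermediate step rather than only its integrated consequence --- is borne out: that claim is proved there by induction on the orbit length from the estimate $|1-\ell_{sy}/\ell_y|\le C\alpha(\ell_y)$, using that $x(1-C\alpha(x))$ is increasing near $0$ and a choice of $A$ small enough.
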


Theorem~\ref{t:alpha-inv} may be compared with Lemma 3.1.3 of~\cite{Navas2011}.
We immediately obtain the following consequence of Theorem~\ref{t:alpha-inv}, by a straightforward change of variables.
\begin{cor}\label{cor:int-alpha-inv}
Suppose $f$ is an exceptional $C^{1,\alpha}$ diffeomorphism, and suppose that $\ell_{i+1}/\ell_i\to 1$ as $i\to\infty$. Then
\[\int_0^1\frac{\alpha^{-1}(t)}{t^2}\, dt<\infty.\]
\end{cor}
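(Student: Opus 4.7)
The plan is to convert the pointwise lower bound supplied by Theorem~\ref{t:alpha-inv} into the integral estimate by summing and performing a single change of variables. First, Theorem~\ref{t:alpha-inv} supplies a constant $A>0$ with $\alpha(\ell_i)\geq A/i$ for every $i\geq 1$. Since $\alpha\colon[0,\infty)\to[0,\infty)$ is a homeomorphism (hence strictly increasing with $\alpha(0)=0$), applying $\alpha^{-1}$ gives $\alpha^{-1}(A/i)\leq \ell_i$. The wandering intervals $\{J_i\}_{i\in\bZ}$ are pairwise disjoint subsets of $S^1$, so $\sum_{i\geq 1}\ell_i\leq 1$, and consequently
\[\sum_{i=1}^{\infty}\alpha^{-1}(A/i)<\infty.\]

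Next I would pass from this sum to an integral. Since $\alpha^{-1}$ is positive and increasing, the function $x\mapsto \alpha^{-1}(A/x)$ is positive and decreasing on $[1,\infty)$, so the standard integral test comparison yields
\[\int_1^{\infty}\alpha^{-1}(A/x)\,dx<\infty.\]
The substitution $t=A/x$, $dt=-A x^{-2}\,dx$ transforms this directly into
\[\int_0^A\frac{\alpha^{-1}(t)}{t^2}\,dt = \frac{1}{A}\int_1^{\infty}\alpha^{-1}(A/x)\,dx<\infty.\]

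Finally, on the compact interval $[\min(A,1),1]$ the integrand $\alpha^{-1}(t)/t^2$ is continuous, and hence bounded, so extending the range of integration from $[0,\min(A,1)]$ up to $[0,1]$ only adds a finite amount. This produces the desired conclusion $\int_0^1 \alpha^{-1}(t)/t^2\,dt<\infty$. There is essentially no obstacle in this argument: all of the analytic content has been absorbed into Theorem~\ref{t:alpha-inv}, and what remains is precisely the routine change of variables flagged in the statement of the corollary.
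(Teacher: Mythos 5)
Your argument is correct and matches the paper's route: the paper derives the corollary from Theorem~\ref{t:alpha-inv} by exactly this summation of the disjoint interval lengths, integral-test comparison, and the change of variables $t=A/x$ (this is the same computation carried out for general $d$ at the end of the proof of Theorem~\ref{t:al-inv}, which you have specialized to $d=1$). No gaps; the handling of the range $[\min(A,1),1]$ is the only bookkeeping needed and you did it correctly.
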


Notice that since $\alpha$ is a homeomorphism of the non-negative reals, we may make sense of the notation $\alpha^{-1}$ as a function. 
Theorem~\ref{t:alpha-inv} recovers the fact that if $\ell_{i+1}/\ell_i\to 1$ as $i\to\infty$, then $f'$ cannot be $\alpha$--continuous for
$\alpha(x)=x\log{1/x}$ (see Exercise 4.1.26 and the examples in section 4.1.4 of~\cite{Navas2011}).

The final main result is as follows, and provides a partial converse to Theorem~\ref{t:al-inv-intro}.

\begin{thm}\label{t:int-comparison-intro}
Let $d$ be a positive integer, and let $\alpha$ be a concave modulus such that  
\[\int_0^1\frac{\alpha^{-1}(t)}{t^{d+1}}\, dt<\infty,\] and such that \[\sup_{t> 0}\frac{\alpha(t)}{t\alpha'(t)}<\infty.\]
If $\rho\co \bZ^d\to\Diff_+^{1,\alpha}(S^1)$ is a faithful representation  that is $C^1$--conjugate into the rotation group $\SO(2,\bR)$, then there exists a sequence 
\[\rho_k\co \bZ^d\to\Diff_+^{1,\alpha}(S^1)\]
of exceptional actions
which converges to $\rho$ in the $C^1$--topology.\end{thm}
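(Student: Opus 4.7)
The plan is to mimic the blow-up construction that underlies Theorem~\ref{t:c1al-intro}, specialized to $G=\bZ^d$ and adapted to use the new regularity hypothesis $\sup_t \alpha(t)/(t\alpha'(t))<\infty$ in place of the technical condition appearing there. Since $\rho$ is $C^1$-conjugate to a faithful action of $\bZ^d$ by rotations, every $\rho$-orbit is free and dense; I would fix such an orbit $\{p_g := \rho(g)(0)\}_{g \in \bZ^d}$. The required bound $\sup_g |\log \rho(g)'(0)| < \infty$ follows at once from the chain rule applied to the $C^1$ conjugacy, because the rotation cocycle is trivial.

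For each $k$, I would set
\[\ell_g^{(k)} = \epsilon_k \cdot \rho(g)'(0) \cdot \beta(|g|),\]
where $\epsilon_k \to 0^+$ and $\beta\colon \bN \to (0,\infty)$ is a decreasing function to be chosen in terms of $\alpha$. Factoring the lengths through the cocycle $g \mapsto \rho(g)'(0)$ ensures that
\[\frac{\ell_{gs}^{(k)}}{\ell_g^{(k)}} = \rho(s)'(p_g)\cdot \frac{\beta(|gs|)}{\beta(|g|)},\]
which matches the $\rho$-derivative at the boundary of each blow-up interval up to the perturbation $\beta(|gs|)/\beta(|g|)$. One then constructs a new circle by inserting an open interval $I_g$ of length $\ell_g^{(k)}$ at each $p_g$, defines $\rho_k(s)$ on $I_g$ by an affine map onto $I_{gs}$ smoothed at the endpoints via the cut-off technique used in the proof of Theorem~\ref{t:c1al-intro}, and extends by the induced action on the Cantor-set complement.

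The two hypotheses on $\alpha$ govern two aspects of the analysis. Summability $\sum_g \ell_g^{(k)} < \infty$, needed so the intervals fit inside $S^1$, reduces after the change of variables $t = \alpha(x)$ and the bound $\alpha(t) \leq C t \alpha'(t)$ to the finiteness of $\int_0^1 \alpha^{-1}(t)/t^{d+1}\,dt$, combined with the $O(n^{d-1})$ spherical growth of $\bZ^d$. This pins down the admissible decay of $\beta$, essentially $\beta(n)\sim\alpha^{-1}(c/n^{d+1})$. The $\alpha$-continuity of $\rho_k(s)'$ then boils down to an estimate of the form
\[\bigl|\beta(|g|+1)/\beta(|g|) - 1\bigr| \leq C\,\alpha\bigl(\ell_g^{(k)}\bigr)\]
uniformly in $g$, and this is where the hypothesis $\sup_t \alpha(t)/(t\alpha'(t)) < \infty$ enters decisively: integrating the resulting differential inequality $\alpha'(t)/\alpha(t)\geq 1/(Ct)$ yields a power-type lower bound $\alpha(t)\gtrsim t^{1/C}$, supplying exactly the quantitative control on the log-derivative of $\beta$ needed to close the estimate. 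I expect this regularity verification to be the main obstacle, since both hypotheses on $\alpha$ must be combined carefully and the optimal $\beta$ has to be reverse-engineered from the target modulus bound. Finally, $\rho_k \to \rho$ in the $C^1$-topology is automatic as $\epsilon_k \to 0^+$, because the total length of the blow-up intervals, as well as the maximal single interval length, both tend to zero.
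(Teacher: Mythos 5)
Your overall framework is the paper's: blow up a free orbit with lengths $\ell^k_{g(0)}=\rho(g)'(0)\cdot\beta(\|g\|,k)$, interpolate on the inserted intervals with the Yoccoz-type bump construction, verify $\alpha$-continuity through a bound of the form $\bigl|1-\ell^k_{sy}/(s'(y)\ell^k_y)\bigr|\le C\,\alpha(\ell^k_y)$, and get $C^1$-convergence from the total length tending to zero (and yes, $\sup_g|\log\rho(g)'(0)|<\infty$ does follow from the $C^1$-conjugacy into rotations). But the proposal has a genuine gap at the crucial point, namely the choice $\beta(n)\sim\alpha^{-1}\bigl(c/n^{d+1}\bigr)$. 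With that choice the key estimate fails: on one hand $\alpha(\ell^k_y)\asymp c/n^{d+1}$ (up to the bounded cocycle and the $\epsilon_k$ factor), while on the other hand, since the standard concavity inequality $\alpha'(s)\le\alpha(s)/s$ gives $(\alpha^{-1})'(u)\ge\alpha^{-1}(u)/u$, one gets
\[
\Bigl|1-\tfrac{\beta(n+1)}{\beta(n)}\Bigr|\;\gtrsim\;\frac{1}{n},
\]
so the ratio $\frac{1}{\alpha(\ell^k_y)}\bigl|1-\beta(n+1)/\beta(n)\bigr|$ grows like $n^{d}$ and is unbounded for every $d\ge1$. This is not just a defect of the method: the endpoint values of $\rho_k(s)'$ on the minimal set are forced to be $s'\circ H_k$, so the oscillation of $\rho_k(s)'$ across $I_y$ really is of order $|1-\ell^k_{sy}/(s'(y)\ell^k_y)|$ on an interval of length $\ell^k_y$, and $\alpha$-continuity genuinely requires the displayed bound.

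The correct choice (the one the paper makes) is $\beta(n)=\alpha^{-1}\bigl(1/(n+k)\bigr)$, i.e.\ the exponent $d+1$ must \emph{not} be placed inside $\alpha^{-1}$: the power $t^{d+1}$ in the integrability hypothesis arises from the spherical growth $\sigma(n)\asymp n^{d-1}$ after the substitution $t=1/x$, since $\sum_n n^{d-1}\alpha^{-1}(1/n)\asymp\int_0^1\alpha^{-1}(t)/t^{d+1}\,dt<\infty$ gives summability of the lengths. With this $\beta$ one has $\alpha(\ell^k_y)\gtrsim 1/(\|y\|_0+k)$, and the hypothesis $\alpha(t)\le Mt\alpha'(t)$ is used directly (not via a power-type bound on $\alpha$; note also that integrating $\alpha'/\alpha\ge 1/(Mt)$ gives an \emph{upper} bound $\alpha(t)\le\alpha(1)t^{1/M}$, not the lower bound you claim) to control $(\alpha^{-1})'(u)\le M\alpha^{-1}(u)/u$, which yields both $\alpha^{-1}(1/x)/\alpha^{-1}(1/(x+1))\to1$ and $\bigl|1-\beta(n\pm1)/\beta(n)\bigr|\lesssim M/n\lesssim\alpha(\ell^k_y)$, closing the estimate uniformly in $y$ and $k$. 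As written, your parameters would produce maps whose derivatives are not $\alpha$-continuous, so the proof does not go through without this correction.
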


\begin{rem}\label{rem:alpha-diff}
Here, the assumption that $\alpha$ is differentiable does not result in any loss of generality~\cite{Medvedev2001}. We also note that the
supremum in Theorem~\ref{t:int-comparison-intro} is bounded below by $1$ as follows from the standard concavity estimate
$t\alpha'(t)\leq\alpha(t)$.\end{rem}

The specialization of Theorem~\ref{t:int-comparison-intro} to a single diffeomorphism is as follows.

\begin{thm}\label{t:int-alpha-conv}
Let $\theta\in\bR\setminus\bQ$ be arbitrary. Suppose that \[\int_0^1\frac{\alpha^{-1}(t)}{t^2}\, dt<\infty,\] and that \[\sup_{t> 0}\frac{\alpha(t)}{t\alpha'(t)}<\infty.\]
Then there exists a $C^{1,\alpha}$ exceptional diffeomorphism $f$ with rotation number $\theta$, and such that $\ell_{i+1}/\ell_i\to 1$
as $i\to\infty$.
\end{thm}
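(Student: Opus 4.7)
The plan is to derive Theorem~\ref{t:int-alpha-conv} as the $d=1$ specialization of Theorem~\ref{t:int-comparison-intro}. Take $\rho\co\bZ\to\Diff_+^{1,\alpha}(S^1)$ to be the cyclic action generated by the rotation $R_\theta$. Since $\theta$ is irrational, $\rho$ is faithful, and since $\rho(\bZ)\subseteq\SO(2,\bR)$ already, the assumption that $\rho$ is $C^1$--conjugate into $\SO(2,\bR)$ is trivially satisfied by the identity. The two hypotheses on $\alpha$ in Theorem~\ref{t:int-alpha-conv} coincide exactly with those of Theorem~\ref{t:int-comparison-intro} with $d=1$. Applying that theorem yields a sequence of exceptional actions $\rho_k\co\bZ\to\Diff_+^{1,\alpha}(S^1)$ with $\rho_k\to\rho$ in the $C^1$ topology. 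Setting $f=\rho_k(1)$ for any fixed $k$ produces an exceptional $C^{1,\alpha}$ diffeomorphism of $S^1$.

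Next, I would verify that $\rot(f)=\theta$ on the nose, rather than merely in the limit. The exceptional actions $\rho_k$ provided by Theorem~\ref{t:int-comparison-intro} are constructed by blowing up a free $\rho$--orbit, so there is a monotone, degree--one, continuous surjection $H_k\co S^1\to S^1$ satisfying
\[
H_k\circ\rho_k(s)=\rho(s)\circ H_k
\]
for the generator $s\in\bZ$. Because rotation number is invariant under such semi-conjugacy, we conclude $\rot(f)=\rot(R_\theta)=\theta$.

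Finally, I would confirm the ratio condition $\ell_{i+1}/\ell_i\to 1$ as $|i|\to\infty$ for the maximal wandering set $\{J_i\}_{i\in\bZ}$ of $f$. The intervals $J_i$ are precisely the inserted intervals at the points of the blown-up orbit, with prescribed lengths $\ell_i$. In the blow-up construction underlying Theorem~\ref{t:int-comparison-intro}, the summable sequence $\{\ell_i\}$ is chosen so that $\ell_{i+1}/\ell_i\to 1$; indeed, the mean value theorem supplies a point $c_i\in J_i$ with $f'(c_i)=\ell_{i+1}/\ell_i$, and $C^1$ matching across the endpoints of $J_i$ with the rotation $R_\theta$ (whose derivative is identically $1$) forces $\ell_{i+1}/\ell_i$ to approach $1$ as $\ell_i\to 0$. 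The $\alpha$--continuity of $f'$ and the estimate $|f'(c_i)-f'(p_i)|\leq [f']_\alpha\cdot\alpha(\ell_i)\to 0$ at any boundary point $p_i$ of $J_i$ give a second route to the same conclusion.

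The main obstacle is this third step: while the conclusion is intuitively unavoidable, rigorously establishing it requires opening the specific blow-up construction of Theorem~\ref{t:int-comparison-intro} to verify that the inserted length sequence (built from the integrability hypothesis on $\alpha$) has the ratio property $\ell_{i+1}/\ell_i\to 1$. This sequence should be of the form $\ell_i=c\cdot a_i$ with $\{a_i\}$ summable and $a_{i+1}/a_i\to 1$, for example $a_i\asymp 1/(|i|+1)^s$ for some $s>1$ chosen compatibly with $\alpha$; the verification is a routine inspection of the construction rather than a new argument.
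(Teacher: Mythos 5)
Your proposal is correct and is essentially the paper's own argument: the paper deduces Theorem~\ref{t:int-alpha-conv} immediately from Theorem~\ref{t:int-comparison-intro} with $d=1$ applied to the cyclic group generated by $R_\theta$. The extra verifications you flag are indeed supplied by that construction: rotation number is preserved by the semi-conjugacy, the blown-up lengths are $\ell_i=\alpha^{-1}\left(1/(|i|+k)\right)$, and the Claim in the proof of Theorem~\ref{t:int-comparison-intro} (that $\alpha^{-1}(1/x)/\alpha^{-1}(1/(x+1))\to 1$), equivalently your $\alpha$--continuity argument together with the fact that the construction makes $f'\equiv 1$ on the minimal set, gives $\ell_{i+1}/\ell_i\to 1$.
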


Theorems~\ref{t:int-comparison-intro} and~\ref{t:int-alpha-conv} are close being converses to Theorems~\ref{t:c1al-intro}
and~\ref{t:int-alpha}, since the supremum hypotheses on $\alpha$ imply that $1/\alpha^d$ and $1/\alpha$ are integrable near zero,
 respectively.

\subsection{Denjoy counterexamples beyond moduli of continuity}

Identifying the precise conditions under which a even a single
homeomorphism $f\in\Homeo^+(S^1)$ with irrational rotation number is necessarily
topologically conjugate
to an irrational rotation is tantalizing. Some regularity of $f^{-1}$ is necessary, as was demonstrated by Hall~\cite{Hall81}. Even a
hypothetical characterization
 of moduli of continuity for which there exist exceptional diffeomorphisms does not appear to be the end of the discussion.
For instance, Sullivan proved that if the logarithm of the derivative of $f$ satisfies the Zygmund condition (also called the ``big"
Zygmund condition), then $f$ is topologically conjugate
to an irrational rotation~\cite{Sullivan1992}.
Note that the Zygmund condition for $\log f'$ implies that $f'$ is $\alpha$--continuous with $\alpha(x) = x\log 1/x$.

More generally, Hu and Sullivan~\cite{HuSu1997} show that if the derivative of $f$ has bounded quadratic variation and
bounded Zygmund variation, then $f$ is topologically conjugate to an irrational rotation. In the same paper, they show that if the
logarithm of the cross ratio distortion of $f$ has bounded variation, then $f$ cannot have any wandering intervals and hence must by
topologically conjugate to an irrational rotation.
None of these conditions on $f$ or $f'$ seem to be
expressible in terms of moduli of continuity for $f'$.

\subsection{Remarks on the structure of the paper}
Throughout this  paper, we strive for  generality insofar as  it  is possible. However,
we have taken steps to make the  exposition easier to follow for
readers who are not interested  in general group  actions but  rather in single diffeomorphisms.

Section~\ref{sec:blowup} discusses  generalities about finitely generated group actions on the circle,  blowups of such actions,
and constructions of exceptional diffeomorphism  actions by  finitely  generated groups. At first, general group actions are
discussed, though later the subexponential growth hypothesis comes into play, culminating in Corollary~\ref{c:stab0} and
the rather technical Theorem~\ref{t:c1al}.

Section~\ref{s:bounding} establishes Theorem~\ref{t:alpha-inv} and Theorem~\ref{t:int-alpha-conv}, which further clarify the  relationship
between the  lengths  of wandering  intervals and the possible moduli of continuity of  exceptional diffeomorphisms of the circle.
The content  of Section~\ref{s:bounding} applies to single diffeomorphisms and not to general  group actions.

Finally, Appendix~\ref{s:appendix} proves  Theorem~\ref{t:int-alpha} directly, without the added layers of 
technical difficulty coming from the relaxed
hypotheses of Theorem~\ref{t:c1al}. The content  of  the appendix again  applies  to  single diffeomorphisms  as opposed to general  group
actions, and  may be used by the reader both as a self-contained discussion and
 as  a  guide  to understand  the intuition behind the proof of Theorem~\ref{t:c1al}.

\section{Blowing-up finitely generated groups}\label{sec:blowup}
Throughout this section, we assume that
\[
\rho\co G\to\Homeo^+(S^1)\]
is an action of a finitely generated group $G$.
We often suppress $\rho$ in expressions involving group actions; that is, when the meaning is clear,  we will simply write
\[
gx=g(x) = \rho(g)(x), \quad
g'(x) =\rho(g)'(x).\]
The reader may imagine that $\rho$ is injective for the ease of reading.

Another action $\tilde \rho$ of $G$ is said to be \emph{semi-conjugate} to $\rho$ if there exists a
monotone surjective (hence continuous) degree--one map $H$ of $S^1$ such that
\[ H\circ \tilde\rho(g) = \rho(g)\circ H\]
 for all $g\in G$.
We sometimes call $H$ a \emph{semi-conjugacy map} from $\tilde\rho$ to $\rho$.

The semi-conjugacy is not an equivalence relation; however, one may define an equivalence
relation on the set of representations $\Hom(G,\Homeo_+(S^1))$
  by declaring that $\rho_0$ and $\rho_1$ are \emph{semi-conjugate} if they have a ``common blow-up''
  in a certain natural sense~\cite[Definition 2.3 and Theorem 2.2]{KKM2019}; see also~\cite{Ghys1999, CD2003IM, BFH2014}. 
It is essentially due to Poincar\'e that every countable group action is semi-conjugate to a minimal one, and also that a
semi-conjugacy preserves the rotation numbers.

If $H$ is a homeomorphism, then the actions $\tilde\rho$ and $\rho$ are conjugate; otherwise, we call $\tilde\rho$
 a \emph{nontrivial blow-up} of $\rho$ at the $\rho(G)$--equivariant set
\[
\{ x\in S^1\mid H^{-1}(x)\text{ is not a singleton }\}.\]
If $\rho$ does not admit a finite orbit, then every nontrivial blow-up of $\rho$ is  exceptional.

For a single minimal diffeomorphism of the circle, there is a standard technique of producing
nearby exceptional diffeomorphisms in the regularity $C^0$ due to Denjoy~\cite{Denjoy1932},
and also in $C^1$ and $C^{1,\alpha}$ due to Herman~\cite{Herman1979}; see
also~\cite{Tsuboi1995, DKN2007} for free abelian groups.
In this section, we will describe a common framework to produce exceptional actions of
finitely generated groups from a given one with various regularities.
This will generalize the aforementioned result of Herman. The reader is referred  to  the appendix  for a discussion of Herman's construction
in the case  of  a single diffeomorphism.

\subsection{Exceptional $C^0$--actions}
The following $C^0$--blow-up process is well-known~\cite{Denjoy1932,Herman1979}. 
We include a proof for the purpose of introducing some notation which will be useful in the sequel.
\begin{prop}\label{p:c0}
Let $\rho$ and $G$ be as above.
Then for each orbit $\mathcal{O}$ of $\rho$, there exists a sequence $\{\rho_k\}$ 
of blow-ups of $\rho$ at  $\mathcal{O}$
which converges to $\rho$ in $C^0$--topology. If $\rho$ is minimal then each $\rho_k$ is exceptional.
\end{prop}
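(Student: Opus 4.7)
The plan is to implement the classical Denjoy blow-up along the orbit $\mathcal{O}$. For each $k$, I will pick positive weights summing to a small quantity $\epsilon_k\to 0$, insert a closed interval $J_x$ of length equal to the weight at $x$ for each $x\in\mathcal{O}$, and transport $\rho$ to the resulting circle in such a way that the new action permutes the inserted intervals affinely while agreeing with $\rho$ on the complement through a collapse semi-conjugacy.

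Concretely, fix an integer $k\geq 1$ and any positive function $\ell^{(k)}\colon \mathcal{O}\to\bR_{>0}$ with total mass $\epsilon_k:=\sum_{x\in\mathcal{O}}\ell^{(k)}_x<1/k$. Using the partial-sum construction one builds a new circle $\widetilde{S^1}_k$ of length $1+\epsilon_k$ (then rescaled to length one) in which every point $x\in\mathcal{O}$ has been replaced by a closed interval $J_x$ of length $\ell^{(k)}_x$. The collapse map
\[
H_k\colon \widetilde{S^1}_k\to S^1
\]
sending each $J_x$ to $x$ and restricting to a homeomorphism on the complements of the inserted intervals is a monotone degree-one surjection, and hence is a candidate semi-conjugacy. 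I would define $\rho_k(g)$ to be the unique orientation-preserving affine homeomorphism $J_x\to J_{gx}$ on each $J_x$, and the $H_k$-conjugate of $\rho(g)$ on the complement of $\bigcup_x J_x$. The homomorphism property is automatic (affine maps compose to affine maps, and on the complement $\rho$ is already a homomorphism). Continuity at the endpoints of each $J_x$ follows from the fact that one-sided limits of $\rho(g)$ through $S^1\setminus\mathcal{O}$ approach the appropriate endpoint of $J_{gx}$, since $\rho(g)$ preserves orientation. By construction $H_k\circ\rho_k(g)=\rho(g)\circ H_k$ for all $g\in G$, so $\rho_k$ is a blow-up of $\rho$ at $\mathcal{O}$.

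For the $C^0$-convergence I would pick a homeomorphism $\Phi_k\colon S^1\to\widetilde{S^1}_k$ (for instance, one that is a right inverse of $H_k$ off a small set and sends each $x\in\mathcal{O}$ to the midpoint of $J_x$) so that the monotone map $\psi_k:=H_k\circ\Phi_k$ satisfies $|\psi_k(y)-y|\leq\epsilon_k$ uniformly. Transporting $\rho_k(g)$ back to $\Homeo^+(S^1)$ via $\Phi_k$ and using the identity $\psi_k\circ\rho_k(g)=\rho(g)\circ\psi_k$ together with uniform continuity of $\rho(g)$, a triangle inequality yields
\[
\sup_{y\in S^1}|\rho_k(g)(y)-\rho(g)(y)|\leq \epsilon_k + \omega_{\rho(g)}(\epsilon_k),
\]
where $\omega_{\rho(g)}$ denotes the modulus of continuity of $\rho(g)$. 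This tends to zero as $k\to\infty$, giving $C^0$-convergence. Finally, if $\rho$ is minimal then $\mathcal{O}$ is dense in $S^1$, so $\bigcup_x J_x$ is dense in $\widetilde{S^1}_k$; its complement $K_k$ is a closed, $\rho_k(G)$-invariant set with empty interior and no isolated points (no two points of a dense orbit are adjacent), hence a Cantor set, inside which one finds a minimal invariant set that is necessarily exceptional.

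The step I expect to be the main technical obstacle is verifying that $\rho_k(g)$ is a well-defined continuous homeomorphism of $\widetilde{S^1}_k$, since the definition on $J_x$ (an affine map) and the definition on the complement (the $H_k$-conjugate of $\rho(g)$) are a priori unrelated and must agree precisely at the endpoints of every inserted interval. Matching the one-sided boundary behaviour of $H_k$ to the appropriate endpoints of $J_{gx}$ via orientation preservation of $\rho(g)$ is the key compatibility to check; once it is in place, the semi-conjugacy relation $H_k\circ\rho_k(g)=\rho(g)\circ H_k$ and all subsequent steps follow essentially by construction.
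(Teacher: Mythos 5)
Your proposal is correct and follows essentially the same route as the paper: a Denjoy-type blow-up along $\mathcal{O}$ with affinely permuted inserted intervals (the paper's linear choice of the equivariant family $\eta_{x,y,g}$), the collapse map $H_k$ as semi-conjugacy, the $C^0$ estimate via $H_k\to\Id$ and uniform continuity of $\rho(g)$, and the Cantor-set complement in the minimal case. The only difference is cosmetic: you build a circle of length $1+\epsilon_k$ and rescale, while the paper normalizes from the start via the measure $(1-L)\,dm+\sum_y\ell_y\,d\delta_y$.
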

\bp Let us fix a finite generating set $S$ of $G$.
For brevity, we may  assume \[\mathcal{O}=\rho(G).0.\]
We will choose the following parameters:
\begin{itemize}
\item a positive sequence $\{\ell_y\}_{y\in\mathcal{O}}$ such that $L:=\sum_{y\in\mathcal{O}}\ell_y\le 1$;
\item a family of orientation-preserving homeomorphisms 
\[\{\eta_{x,y,g}\co [0,\ell_x]\to[0,\ell_y] \mid {x,y\in\mathcal{O}}\text{ and }g\in G\text{ such that }y=gx\}\]
which is equivariant in the sense that 
\[
\eta_{gx,hgx,h}\circ\eta_{x,gx,g}=\eta_{x,hgx,hg}.\]
\end{itemize}
Note that we do \emph{not}  require  $\eta_{x,y,g}=\eta_{x,y,h}$ even when $gx=y=hx$; however, for the purpose of
the proof of the present proposition, the reader may  imagine that $\eta$ is linear:
\[\eta_{x,y,g}(t)=t\cdot \ell_y / \ell_x.\]

We will replace each orbit point $y\in\mathcal{O}\sse S^1$ by a closed interval of length $\ell_y$, whose interior will be denoted by
 $I_y$.  More formally, we define a measure
\[
d\lambda := (1-L)dm+\sum_{y\in \mathcal{O}} \ell_y d\delta_{y},\]
where $dm$ and $d\delta$ denote the Lebesgue and the Dirac measures, respectively. We let
\[
a_y:=\lambda[0,y),\ b_y:=\lambda[0,y],\]
and define $I_y:=(a_y,b_y)$ for $y\in\mathcal{O}$.

There exists a unique, natural, monotone, surjective, degree--one map
\[H\co S^1\to S^1\] satisfying $H(I_y)=\{y\}$ and $\mu(A)=m(H^{-1}A)$ for each Borel set $A$. That is, $\mu$ is the \emph{image}
(or push-forward) of Lebesgue measure by the map $H$.
Notice that we are treating the unit circle $S^1=\bR/\bZ$ as two different objects. One is the range of $H$, which is the original circle containing $\mathcal{O}$; the other is the domain of $H$, where each $I_y\in[0,1]$ is contained as an open interval.

Since $H$ is 1-1 on $K:=S^1\setminus\coprod_{y\in\mathcal{O}} I_y$, for each $g\in G$ we have a circle homeomorphism
\[
\tilde\rho(g)(t):=
\begin{cases}
a_{gy}+\eta_{y,gy,g}(t-a_y),&\text{ if }t\in\bar I_y,\\
H^{-1}\circ\rho(g)\circ H(t),&\text{ if }t\in K.\end{cases}\]
The equivariance assumption on $\eta$ implies that $\tilde\rho$ is a group action. If $\rho$ is minimal, then $K$ is a Cantor set and $\tilde\rho$ is exceptional. 

For each $k\in\bN$ we repeat the above process for the parameters $\{\ell_y^k\}_{y\in\mathcal{O}}$ with $\lim_k \ell_y^k=0$.
For instance, one may choose $\ell_y^k=\ell_y/k$
and arrange $\eta^k_{x,y,g}$ accordingly.
We let $\rho_k$ denote the resulting blow-up, with a semi-conjugacy map $H_k$. 

Let $\epsilon>0$. 
Since $H_k\to \Id$ uniformly,
 for each $s\in S$, and for all  $k\gg0$, we obtain 
\[
\|\rho_k(s) -  \rho(s)\|_\infty
<\|\rho_k(s) -  \rho(s)\circ H_k\|_\infty+\epsilon
=\|\rho_k(s) -  H_k\circ\rho_k(s)\|_\infty+\epsilon
<2\epsilon.\]
This completes the proof.\ep

\subsection{Exceptional $C^1$--actions}
We will now generalize Herman's construction for a single $C^1$--diffeomorphism to general finitely generated groups (cf. appendix of this
paper).
For this, we  follow the proof of Proposition~\ref{p:c0} while making a ``smoother'' choice of the equivariant family $\{\eta_{x,y,g}\}$.
For a countable set $A$ and a real function $f\co A\to\bR$,  we will write $\lim_{a\in A}f(a)=L$ if for an arbitrary enumeration $A=\{a_i\}$,
 we have $\lim_i f(a_i)=L$. The reader may simply imagine that $\{f(a_i)\}_i$ is  monotone decreasing in most cases.

\begin{thm}\label{t:c1}
Let $G$ be a finitely generated group, and let $\rho\co G\to\Diff_+^1(S^1)$ be an action with a choice of an orbit $\mathcal{O}$.
Assume that there exist positive sequences $\{\ell^k_y\}_{y\in\mathcal{O}}$ for $k\in\bN$ such that
\[\lim_{k\to\infty}\sum_{y\in\mathcal{O}} \ell_y^k=0\]
and such that
\[\lim_{k\to\infty} s'(y)\ell^k_y/ \ell^k_{sy}=1=\lim_{y\in \mathcal{O}} s'(y)\ell^k_y/ \ell^k_{sy}\]
 for each generator $s$ of $G$.
Then there exists a sequence  \[\{\rho_k\co G\to\Diff_+^1(S^1)\}\]
of blow-ups of $\rho$ at  $\mathcal{O}$
which converges to $\rho$ in the $C^1$--topology.
\end{thm}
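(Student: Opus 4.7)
The plan is to refine the $C^0$--blow-up construction of Proposition~\ref{p:c0} by making a careful, $C^1$--smooth equivariant choice of the gluing maps $\eta^k_{x,y,g}$, yielding $C^1$ blow-ups that converge to $\rho$ in the $C^1$--topology. For each $k$, I apply Proposition~\ref{p:c0}'s construction with the parameters $\{\ell_y^k\}$, obtaining intervals $\bar I_y^k$ of length $\ell_y^k$, the Cantor set $K_k = S^1\setminus\coprod_y I_y^k$, and a semi-conjugacy $H_k\co S^1\to S^1$. On $K_k$ the map $H_k$ has constant derivative $1/(1-L_k)$, where $L_k=\sum_y\ell_y^k$, so a chain-rule computation applied to $\tilde\rho_k(g)=H_k^{-1}\circ\rho(g)\circ H_k$ yields
\[
\tilde\rho_k(g)'(t)=\rho(g)'(H_k(t))\quad\text{for all } t\in K_k.
\]
In particular, the derivative at either endpoint of $\bar I_y^k$ must equal $g'(y)$, so for $\tilde\rho_k(g)$ to be $C^1$ at the boundary of $\bar I_y^k$ we require
\[
\eta^k_{y,gy,g}{}'(0)=\eta^k_{y,gy,g}{}'(\ell_y^k)=g'(y).
\]

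The technical heart of the proof is to simultaneously enforce these endpoint derivatives and the cocycle identity $\eta^k_{gx,hgx,h}\circ\eta^k_{x,gx,g}=\eta^k_{x,hgx,hg}$. A clean device is to choose a canonical family of $C^1$ parameterizations $\sigma_y^k\co\bar I_y^k\to[0,1]$ and set
\[
\tilde\rho_k(g)|_{\bar I_y^k}=(\sigma_{gy}^k)^{-1}\circ\sigma_y^k.
\]
This conjugation form makes the cocycle automatic, while the endpoint derivative requirement becomes the multiplicative identity
\[
\sigma_y^k{}'(a_y^k)/\sigma_{gy}^k{}'(a_{gy}^k)=g'(y),
\]
and analogously at $b_y^k$. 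On a free orbit, fixing a basepoint $x_0\in\mathcal{O}$ and the normalization $\sigma_{x_0}^k{}'(a_{x_0}^k)=1/\ell_{x_0}^k$ determines the remaining boundary values uniquely via iteration of the cocycle; the hypothesis $g'(x_0)\ell_{x_0}^k/\ell_{gx_0}^k\to 1$ ensures these stay comparable to $1/\ell_y^k$, so the interior of each $\sigma_y^k$ can be completed with a smooth positive density of total integral $1$.

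Finally, I verify $C^1$--convergence $\rho_k\to\rho$ separately on $K_k$ and on each $\bar I_y^k$. On $K_k$, $\tilde\rho_k(s)'=\rho(s)'\circ H_k\to\rho(s)'$ uniformly since $H_k\to\Id$ uniformly. On each $\bar I_y^k$, the derivative of $\tilde\rho_k(s)$ differs from $s'(y)=\rho(s)'(y)$ by $o(1)$ thanks to the endpoint-cocycle construction and the hypothesis $\lim_{y\in\mathcal{O}} s'(y)\ell_y^k/\ell_{sy}^k=1$, which controls the discrepancy uniformly in $y$ for each fixed $k$; combined with $\ell_y^k\to 0$ uniformly in $y$ (a consequence of $\sum_y\ell_y^k\to 0$) and the uniform continuity of $\rho(s)'$, this yields uniform closeness of $\tilde\rho_k(s)'(t)$ to $\rho(s)'(t)$ on $\bar I_y^k$. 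The main obstacle is reconciling the cocycle constraint with the prescribed endpoint derivatives: this is precisely why the hypothesis is a double limit, one ensuring small endpoint error along the orbit for fixed $k$, the other ensuring that this error tends to zero as $k\to\infty$.
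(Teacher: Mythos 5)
Your blueprint is the same as the paper's (a smooth equivariant gluing family with endpoint derivative $g'(y)$, with the two limits in the hypothesis playing exactly the roles you assign them), but as written the argument has a genuine gap at its analytic core: you never control the gluing maps in the \emph{interior} of the intervals. Prescribing only $\sigma_y^k{}'$ at the two endpoints and the total integral does not constrain the ratio $\sigma_y^k{}'(t)\big/\sigma_{sy}^k{}'\bigl((\sigma_{sy}^k)^{-1}\sigma_y^k(t)\bigr)$ away from the endpoints, and it is precisely this ratio that must be uniformly close to $s'(y)$, simultaneously for all generators $s$, in order to get (a) that each $\rho_k(s)$ is $C^1$ at all: every point of the minimal set $K_k$ is a two--sided accumulation point of the intervals $I_y^k$, so continuity of the derivative forces $\sup_{t\in I_y^k}|\rho_k(s)'(t)-s'(y)|\to 0$ along the orbit (differentiability at points of $K_k$ is likewise never verified — matching one--sided derivatives at endpoints is not enough); and (b) the $C^1$--convergence $\rho_k(s)\to\rho(s)$, which needs a quantitative bound such as $\sup_{t\in I_y^k}|\rho_k(s)'(t)-s'(y)|\lesssim |1-\ell^k_{sy}/(s'(y)\ell_y^k)|$. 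The paper supplies exactly this missing ingredient with the explicit Yoccoz--type family $\phi(a,b,R)=\psi(b,B)^{-1}\circ\psi(a,A)$, whose derivative is $\tfrac{b}{aR}\bigl(1-(1-R)\xi(R)(t/a)\bigr)$, and then proves $C^1$--ness by exhibiting the derivative as the explicitly continuous density $\lambda(g)$ and integrating it, checking the blow-up property through integral identities. Relatedly, your assertion that $H_k$ has constant derivative $1/(1-L_k)$ on $K_k$ is false (the inserted intervals accumulate on every point of $K_k$, so the difference quotients depend on the local density of inserted lengths), so the chain-rule derivation of $\rho_k(g)'=\rho(g)'\circ H_k$ on $K_k$ is not justified as stated; and the comparability of your boundary data to $1/\ell_y^k$ does not follow from the hypothesis, which controls generator ratios asymptotically along the orbit but not their products over words of unbounded length.

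A secondary but real issue: your device $\rho_k(g)|_{\bar I_y^k}=(\sigma_{gy}^k)^{-1}\circ\sigma_y^k$ makes the restriction depend only on the pair $(y,gy)$, hence forces the identity on intervals over points with $gy=y$; this is incompatible with the required boundary derivative $g'(y)$ when $g'(y)\neq 1$, and the iterative definition of the boundary data from a basepoint is then inconsistent. The theorem does not assume the orbit is free (the paper explicitly remarks that freeness is not required, and that over a fixed point the blown-up dynamics on $I_y$ may be a nontrivial interval diffeomorphism), whereas the paper's family $\eta_{y,gy,g}=\phi\bigl(\ell_y,\ell_{gy},\ell_{gy}/(g'(y)\ell_y)\bigr)$ depends on $g$ itself and is equivariant by the chain rule, so it handles the non-free case. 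If you restrict to free orbits and replace your unspecified interior completion by an explicit family with the uniform derivative estimate above, your argument becomes the paper's.
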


We begin with defining a new equivariant family.
For $a,A>0$, we let
\[\psi(a,A)(t):=-\cot(\pi t/a)/A\co (0,a)\to\bR.\]
We then have 
\[\psi(a,A)^{-1}(t)=a/2 + a/\pi \cdot \arctan(At)\]
For $a,A,b,B>0$, we obtain a homeomorphism
\[
\phi(a,b,B/A):=\psi(b,B)^{-1}\circ\psi(a,A)\co [0,a]\to[0,b].\]
A special case of the family $\{\phi(a,b,b/a)\}_{a,b>0}$ is due to Yoccoz and
is used in~\cite{DKN2007,Navas2008GAFA}. See also~\cite{FF2003}, and ~\cite{Jorquera} for an improvement on Farb--Franks' result.
For each  $R>0$, we  define \[
\xi(R)(t):=  \frac{1+R}{1+R^2\cot^2(\pi t)}\cdot\chi_{[0,1]}(t) .\]
For a fixed $R>0$, the function $\xi(R)(x)$ on $x$ is a $C^1$--map supported on $[0,1]$.
\begin{lem}\label{l:Yoccoz}
The following hold for $a,b,c,R,S>0$.
\be
\item $\phi$ is equivariant: $\phi(b,c,S)\circ\phi(a,b,R)=\phi(a,c,RS)$.
\item $\int_\bR \xi(R)(t)dt=1$.
\item For all $t\in[0,a]$ we have that
\[
\phi(a,b,R)'(t) =\frac{b}{aR} \cdot (1-(1-R)\xi(R)(t/a)).\]
\ee
\end{lem}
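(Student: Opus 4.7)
My plan is to treat the three assertions in the order (1), (3), (2), since (3) essentially forces (2) once $\phi$ is known to be a diffeomorphism $[0,a]\to[0,b]$.

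For part (1), the key observation I would record first is that $\phi(a,b,R)$ depends only on the \emph{ratio} $R=B/A$ and not on $A$ and $B$ separately; this is immediate from
\[
\psi(b,B)^{-1}\circ\psi(a,A)(t)
 =\tfrac{b}{2}+\tfrac{b}{\pi}\arctan\!\left(-\tfrac{B}{A}\cot(\pi t/a)\right).
\]
Given this, equivariance reduces to a telescoping cancellation: write $\phi(a,b,R)=\psi(b,AR)^{-1}\circ\psi(a,A)$ and $\phi(b,c,S)=\psi(c,ARS)^{-1}\circ\psi(b,AR)$ with a common constant $AR$, so the middle pair $\psi(b,AR)\circ\psi(b,AR)^{-1}$ cancels and leaves $\psi(c,ARS)^{-1}\circ\psi(a,A)=\phi(a,c,RS)$.

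For part (3), I would just apply the chain rule. Using
\[
\psi(a,A)'(t)=\frac{\pi}{aA\sin^{2}(\pi t/a)},\qquad (\psi(b,B)^{-1})'(u)=\frac{bB/\pi}{1+B^{2}u^{2}},
\]
and taking $B=AR$, I obtain after cancellation of $A$ and $\pi$ the closed form
\[
\phi(a,b,R)'(t)=\frac{bR}{a\bigl(\sin^{2}(\pi t/a)+R^{2}\cos^{2}(\pi t/a)\bigr)}.
\]
Then the right-hand side of the claim in (3) unpacks, using $\xi(R)(t/a)=(1+R)/(1+R^{2}\cot^{2}(\pi t/a))$, to
\[
\frac{b}{aR}\cdot\frac{R^{2}\csc^{2}(\pi t/a)}{1+R^{2}\cot^{2}(\pi t/a)}
=\frac{bR}{a\bigl(\sin^{2}(\pi t/a)+R^{2}\cos^{2}(\pi t/a)\bigr)},
\]
matching the chain rule calculation. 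This is the only genuine piece of algebra and is where most potential slips would occur, but it is routine.

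For part (2), rather than attempting the trigonometric integral directly (which is fine via the substitution $u=R\cot(\pi t)$ but slightly fussy at the endpoints), I would exploit that $\phi(a,b,R)\colon[0,a]\to[0,b]$ is a diffeomorphism, so integrating (3) over $[0,a]$ gives
\[
b=\int_{0}^{a}\phi(a,b,R)'(t)\,dt=\frac{b}{R}\left(1-(1-R)\int_{0}^{1}\xi(R)(s)\,ds\right),
\]
after the substitution $s=t/a$. Rearranging yields $(1-R)\int_{0}^{1}\xi(R)(s)\,ds=1-R$, so for $R\ne 1$ the integral equals $1$; the case $R=1$ is handled by noting $\xi(1)(t)=2\sin^{2}(\pi t)\chi_{[0,1]}$, which integrates to $1$ by continuity in $R$ (or a one-line direct computation). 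Since $\xi(R)$ is supported in $[0,1]$, this is the same as the integral over $\bR$.

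The only mildly annoying point is the bookkeeping in (1) to confirm that $\phi(a,b,R)$ really is a function of $R$ alone; everything else is a routine chain-rule-and-substitute computation.
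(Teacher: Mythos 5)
Your proposal is correct; all three computations check out. Note that the paper itself offers no argument here -- it dismisses the lemma with ``the proof is straightforward, and we omit the details'' -- so there is no written approach to compare against. Your treatment is the natural one: the observation that $\phi(a,b,R)$ depends only on the ratio $R=B/A$ makes (1) a genuine telescoping cancellation, and the chain-rule computation for (3) matches the closed form $\phi(a,b,R)'(t)=\tfrac{bR}{a\left(\sin^{2}(\pi t/a)+R^{2}\cos^{2}(\pi t/a)\right)}$, which agrees with the stated expression after unpacking $\xi$. Your derivation of (2) from (3) via the fundamental theorem of calculus is a pleasant shortcut that avoids the direct evaluation of $\int_0^1\frac{1+R}{1+R^2\cot^2(\pi t)}\,dt$ (itself an easy standard integral equal to $1$); the only point worth making explicit is that the formula in (3) extends continuously to $t=0,a$ (with value $b/(aR)$), so $\phi$ is $C^1$ on the closed interval $[0,a]$ and $\int_0^a\phi(a,b,R)'=b$ is legitimate, and the isolated case $R=1$ is handled exactly as you say.
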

\bp The proof is straightforward, and we omit the details.\ep

\bp[Proof of Theorem~\ref{t:c1}] 
For brevity, fix $k>0$ and let $\ell_y:=\ell^k_y$.
We let $H$ be the semi-conjugacy map determined by the parameter $\{\ell_y\}$ as in Proposition~\ref{p:c0}.
Set
\[
\lambda(g)(t):=g'\circ H(t)
\cdot
\left(1 - \sum_{y\in\mathcal{O}} \left(1 - \frac{\ell_{gy}}{g'(y)\ell_y}\right)\xi\left(\frac{\ell_{gy}}{g'(y)\ell_y}\right)\left(\frac{t-a_y}{\ell_y}\right)\right).\]
For each $t\in I_y=(a_y,b_y)$, we note that
\[\lambda(g)(t)= \phi\left(\ell_y,\ell_{gy},\frac{\ell_{gy}}{g'(y)\ell_y}\right)'
(t-a_y).\]
\begin{claim*}\label{c:c1-blowup}
For each $g\in G$, we have the following.
\be
\item\label{l:lam1}
The map $\lambda(g)\co  S^1\to\bR$ 
is positive and continuous for each $g\in G$.
\item\label{l:lam2}
For each $y\in \mathcal{O}$, we have
$\int_{I_y} \lambda(g) dm = m(I_{gy})$.
\item\label{l:lam3}
For each $x,y\in \mathcal{O}$, we have
$\int_{[a_x,a_y]} \lambda(g) dm = m[a_{gx},a_{gy}]$.
\item\label{l:lam4} $\int_{S^1} \lambda(g)dm = 1$.
\ee
\end{claim*}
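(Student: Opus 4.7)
The plan is to verify items (1)--(4) in order, with one key preliminary observation that makes the verification almost mechanical: $\xi(R)$ is supported in $[0,1]$, so for any $t\in S^1$ at most one summand in the definition of $\lambda(g)$ is nonzero. If $t\in I_y$ then only the $y$-th term survives and
\[\lambda(g)(t) = \phi\bigl(\ell_y,\ell_{gy},\ell_{gy}/(g'(y)\ell_y)\bigr)'(t-a_y),\]
while if $t\in K$ every $\xi$--factor vanishes and $\lambda(g)(t) = g'(H(t))$.

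For (1), smoothness in the interior of each $I_y$ and in the interior of $K$ is immediate from this piecewise formula together with $g\in\Diff_+^1(S^1)$ and continuity of $H$. At a boundary point $a_y$ (and symmetrically $b_y$) I would use $\xi(R)(0)=\xi(R)(1)=0$, which follows because $\cot^2$ blows up at integer multiples of $\pi$; both one-sided limits therefore equal $g'(y)$. Positivity of $\lambda(g)$ reduces, via Lemma~\ref{l:Yoccoz}(3), to showing $(1-R)\xi(R)(s)<1$ on $[0,1]$; since $\xi(R)\le 1+R$ (attained at $s=1/2$), the worst case evaluates to $1-(1-R)(1+R)=R^2>0$.

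For (2), change variables $u=t-a_y$ in $\int_{I_y}\lambda(g)\,dm$ and apply the fundamental theorem of calculus, using that $\phi(\ell_y,\ell_{gy},\cdot)$ is an orientation--preserving homeomorphism $[0,\ell_y]\to[0,\ell_{gy}]$. For (3), I would decompose $[a_x,a_y]$ into the disjoint union of the closed intervals $\bar I_z$ over orbit points $z$ with $a_x\le a_z<a_y$, together with $K\cap[a_x,a_y]$. The $\bar I_z$--pieces contribute $\sum_z\ell_{gz}$ by (2). For the $K$--piece I would exploit the push-forward identity $H_*(m|_K)=(1-L)\,dm$ on $S^1\setminus\mathcal{O}$ to rewrite the integral as $(1-L)\int_x^y g'(u)\,du=(1-L)(gy-gx)$. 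Matching against
\[m[a_{gx},a_{gy}] = (1-L)(gy-gx) + \sum_{w\in\mathcal{O},\,gx\le w<gy}\ell_w\]
and invoking the bijection $\{gz:z\in\mathcal{O},\ a_x\le a_z<a_y\}=\{w\in\mathcal{O}:gx\le w<gy\}$ closes the step. Item (4) then follows directly by summing the same pieces over the whole circle: $\sum_y\ell_{gy}+(1-L)\int_{S^1}g'\,dm=L+(1-L)=1$, using that $g$ permutes $\mathcal{O}$ and that $\int_{S^1}g'\,dm=1$ for a degree--one circle diffeomorphism.

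The main technical obstacle I anticipate is the bookkeeping in (3): one must verify carefully that $H$ is injective on $K$ outside a countable set so that $H_*(m|_K)=(1-L)\,dm$ really holds, and that the circular ordering of $\{a_z\}_{z\in\mathcal{O}}$ in the domain matches, under $H$, the circular ordering of $\mathcal{O}$ in the target, so that the orbit--point contributions pair up correctly under the action of $g$. Once these are in place, every remaining step is direct computation from the definitions of $\phi$, $\xi$, and $\lambda$ together with Lemma~\ref{l:Yoccoz}.
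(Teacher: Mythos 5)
Your treatment of items (2)--(4) is essentially the paper's argument: the same change of variables on each $I_y$ via $\phi$, the same decomposition of $[a_x,a_y]$ into the blown-up intervals and the Cantor part, and the same push-forward identity $H_*m=\mu$ (the paper phrases it through the Riesz representation theorem) to evaluate the integral over $K\cap[a_x,a_y]$, followed by the matching of atoms under $z\mapsto gz$; item (4) is then the same summation over the whole circle.

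There is, however, a genuine gap in item (1). Your continuity check covers the interiors of the $I_y$, the interior of $K$, and the one-sided limits at the endpoints $a_y,b_y$ -- but when $\rho$ is minimal $K$ is a Cantor set with empty interior, and all but countably many points of $K$ are accumulation points of infinitely many intervals $I_y$. At such a point $t_0$, nearby points $t$ lie in intervals $I_y$ with $y$ close to $H(t_0)$, where
\[
\lambda(g)(t)=g'(y)\bigl(1-(1-R_y)\,\xi(R_y)\bigl((t-a_y)/\ell_y\bigr)\bigr),
\qquad R_y=\frac{\ell_{gy}}{g'(y)\ell_y},
\]
and this deviates from $g'(y)$ by as much as $|g'(y)|\,|1-R_y|\,\|\xi(R_y)\|_\infty$. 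Continuity of $\lambda(g)$ at $t_0$ therefore forces these bump sizes to tend to zero along the orbit; this is exactly where the hypothesis $\lim_{y\in\mathcal{O}} s'(y)\ell^k_y/\ell^k_{sy}=1$ of Theorem~\ref{t:c1} must enter, and your argument never invokes it. Without that hypothesis the claim is simply false: one can choose lengths so that the bumps on a sequence of intervals shrinking to a point of $K$ have size bounded away from zero, and $\lambda(g)$ is then discontinuous there. The paper closes this point by the estimate $\sup_t\,\bigl|(1-R)\xi(R)(t)\bigr|\le |1-R^2|$, which shows the series defining $\lambda(g)$ converges uniformly (the terms, having disjoint supports, have sup-norms tending to zero along $\mathcal{O}$), so the sum is continuous. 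Note the same issue already undermines your assertion that the one-sided limit at $a_y$ \emph{from outside} $I_y$ equals $g'(y)$: that limit picks up contributions from the other intervals accumulating at $a_y$, and again needs the uniform smallness of their bumps. Your positivity argument, and the piecewise description coming from the disjoint supports of the $\xi$-terms, are fine.
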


\bp[Proof of the Claim]
(\ref{l:lam1})
The positivity of $\lambda$ is obvious. 
The uniform convergence theorem,
along with the estimate below
implies that $\lambda(g)$ is continuous:
\[
\lim_{y\in\mathcal{O}} 
\left|\left(
1 - \frac{\ell_{gy}}{g'(y)\ell_y}
\right)
\xi
\left(\frac{\ell_{gy}}{g'(y)\ell_y}\right)\left(
\frac{x-a_y}{\ell_y}\right)\right|
\le
\lim_{y\in\mathcal{O}} 
\left|
1 - \left(\frac{\ell_{gy}}{g'(y)\ell_y}\right)^2
\right|=0.
\]

(\ref{l:lam2})
Setting $R = (\ell_{gy}/\ell_y)/g'(y)$, we have
$
\int_{I_y}\lambda(g) =
\phi\left(\ell_y,\ell_{gy},R\right)(\ell_y)=\ell_{gy}$.

(\ref{l:lam3})
Recall we have set $K := S^1\setminus\coprod_w I_w$.
We note that
\[\int_{[a_x,a_y]\setminus K} \lambda(g) dm =
\sum_{w\in [x,y)\cap \mathcal{O}} \int_{I_w} \lambda(g)dm
=\sum_{w\in [x,y)\cap \mathcal{O}} \ell_{g(w)}
=\mu\left(g[x,y) \cap\mathcal{O}\right).\]
Recall $d\mu$ is the image of the Lebesgue measure $dm$.
A simple application of Riesz representation theorem as in~\cite[Theorem 1.19]{Mattila1995} shows that
\[
\int_{[a_x,a_y]\cap K} \lambda(g) dm =
\int_{H^{-1}([x,y)\setminus\mathcal{O} )} g'\circ H dm=\int_{[x,y) \setminus\mathcal{O}} g' d\mu
=\mu(g[x,y)\setminus\mathcal{O}).
\]
Summing up the above integrals, we obtain the desired conclusion. 
The proof of part (\ref{l:lam4}) is now immediate.
\ep

Consider the equivariant family
\[
\eta_{y,gy,g} := \phi\left(\ell_y,\ell_{gy},\frac{\ell_{gy}}{g'(y)\ell_y}\right)\co [0,\ell_y]\to[0,\ell_{gy}]\]
of $C^2$--diffeormophisms. 
From the claim and  from
\[
H(x) =\sup\{z\mid \mu[0,z)\le x\},\]
 we can see that  $\tilde\rho$ resulting from this equivariant family is expressed by
\[
\tilde\rho(g)(x) := a_{g({0})} + \int_{0}^x \lambda(g)\; dx \pmod\bZ.\]
Since $\lambda(g)$ is continuous, we have that  $\tilde\rho(G)\le\Diff_+^1(S^1)$.

We now repeat the above process for each $\{\ell_y^k\}_{y\in\mathcal{O}}$ and 
\[\eta_{x,y,g}^k:=
\phi\left({\ell_y^k},{\ell^k_{gy}},\frac{\ell^k_{gy}}{g'(y)\ell^k_y}\right)\co \left[0,{\ell^k_y}\right]\to\left[0,{\ell^k_{gy}}\right].\]
We denote  the resulting blow-up by $\rho_k$, and the semi-conjugacy map by $H_k$.
We let $I^k_y=H_k^{-1}(y)$ for each $y\in\mathcal{O}$.

It only remains to show that
\[
\|\rho_k(s)'-\rho(s)'\|_\infty\to 0\]
for each generator $s$ of $G$.
Let $\epsilon>0$ be arbitrary,
and let $k$ be sufficiently large.
For $t\in S^1\setminus\coprod_{y\in\mathcal{O}}I^k_y$, we see from the uniform continuity of $s'$  that
\[
|\rho_k(s)'(t)-\rho(s)'(t)|
= |s'\circ H_k(t)-s'(t)|<\epsilon.\]
In the case when $t\in \coprod_{y\in\mathcal{O}} I^k_y$,
we have as $k\to\infty$ that
\[
|\rho_k(s)'(t)-\rho(s)'(t)|
\le \|s'\|_\infty\cdot
\left|1 - \frac{\ell^k_{sy}}{s'(y)\ell^k_y}\right|
\cdot\|\xi\|_\infty
+|s'\circ H_k(t)-s'(t)|\to0,\]
which establishes the theorem.
\ep

\begin{rem}
We do \emph{not} require that $\mathcal{O}$ is a free orbit in Theorem~\ref{t:c1}.
If $gy=y$ and $g'(y)\ne1$ for some $y\in\mathcal{O}$, then 
$\tilde\rho(g)\restriction_{I_y}$ could be a nontrivial $C^1$--diffeomorphism acting on the interval $I_y$.
\end{rem}

\begin{rem}
In Theorem~\ref{t:c1},
we may simply assume the existence of a single positive sequence $\{\ell_y\}_{y\in\mathcal{O}}$ instead of $\{\ell_y^k\}_{y\in\mathcal{O}}$ for all $k$. In this case, we will only require the following:
\begin{itemize}
\item $\sum_{y\in\mathcal{O}}\ell_y<\infty$;
\item $\lim_{y\in \mathcal{O}} s'(y)\ell_y/ \ell_{sy}=1$ for each generator $s$;
\item $y\not\in I_y$ for each $y\in\mathcal{O}$,
where here $I_y$ is as defined in the proof of Proposition~\ref{p:c0} by the parameter $\{\ell_y\}$.
\end{itemize}
For $L=\sum_{y\in\mathcal{O}}\ell_y$, the last condition can be rephrased purely in terms of $\{\ell_x\}$,
namely: \[y\not\in\left(\sum_{x\in [0,y)\cap\mathcal{O}}\ell_x/L,\sum_{x\in [0,y]\cap\mathcal{O}}\ell_x/L\right).\]
Then we can simply put $\ell^k_y :=\ell_y/k$ and apply the above proof.
In this case,  each $t\in S^1$ satisfies the dichotomy that either
\[
t\in S^1\setminus\coprod_{y\in\mathcal{O}} I_y^k\]
for all sufficiently large $k$, or the following set is infinite:
\[
\{y\in\mathcal{O}\mid t\in I^k_y\text{ for some }k\}.\]
One can deduce the desired $C^1$--convergence from the second bullet point.
\end{rem}

Let $S$ be a fixed finite generating set of a group $G$.
For $g\in G$, we let $\|g\|_S$ denote the length of a shortest word in $S$ representing $g$. 
The \emph{spherical growth function} of $G$ is defined as
\[\sigma_{G,S}(n):= \#\{ g\in G\co \|g\|_S=n\}.\]
We often write $\|g\|:=\|g\|_S$ and $\sigma(n):=\sigma_{G,S}(n)$ when the meanings are clear from the context.
It is sometimes useful to consider the \emph{upper spherical growth function}
\[\bar\sigma(n):=\sup_{1\le i\le n}\sigma(i).\]
The functions $\sigma$ and $\bar\sigma$ are sub-multiplicative; moreover, $\bar\sigma$ is monotone increasing. 

Recall that a function $f\co\bN\to\bR_+$ is of \emph{sub-exponential growth} if
\[\limsup_{n\to\infty} f(n)^{1/n}=1.\]
The group $G$ is said to be of \emph{sub-exponential growth} if so is the map 
\[n\mapsto \#\{ g\in G\co \|g\|_S\le n\},\]
which is easily seen~\cite{dlHarpe2000} to be equivalent to each of the following:
\begin{itemize}
\item $\lim_{n\to\infty}\sigma(n)^{1/n}=1$;
\item $\lim_{n\to\infty} \bar\sigma(n)^{1/n}=1$.\end{itemize}

If  $\bar\sigma$ satisfies the following ``ratio--test'':
\[\lim_{n\to\infty} \bar\sigma(n+1)/\bar\sigma(n)=1,\] 
then $\bar\sigma$ is of sub-exponential growth.
A partial converse is the following.



\begin{lem}\label{l:sigma-upper}
If $f\co \bN\to[1,\infty)$ is a sub-multiplicative and monotone increasing function such that
\[\lim_{n\to\infty} f(n)^{1/n}=1.\]
Then there exists a monotone increasing function $g\co \bN\to[1,\infty)$  
such that 
\[
\lim_{n\to\infty} g(n+1)/g(n)=1\]
and such that each $n\in\bN$ satisfies
\[ f(n)\le g(n)\le f(n)^2.\]
\end{lem}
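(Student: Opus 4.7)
The plan is to take logarithms and work with the sub-additive function $h(n):=\log f(n)$, extended by $h(0):=0$, which is then a monotone non-decreasing map $\bN\cup\{0\}\to[0,\infty)$ with $h(n)/n\to 0$. It suffices to construct a non-decreasing $\tilde h\co\bN\to[0,\infty)$ satisfying $h(n)\le\tilde h(n)\le 2h(n)$ and $\tilde h(n+1)-\tilde h(n)\to 0$, after which $g:=\exp\circ\tilde h$ will meet every condition of the lemma.

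I propose to let $\tilde h$ be the discrete least concave majorant of $h$, namely
\[ \tilde h(n) := \sup\left\{\lambda h(a)+(1-\lambda)h(b):\ a,b\in\bN\cup\{0\},\ \lambda\in[0,1],\ \lambda a+(1-\lambda)b=n\right\}. \]
The bound $\tilde h\ge h$ is immediate (take $a=b=n$), and discrete concavity of $\tilde h$ forces the forward differences $d_n:=\tilde h(n+1)-\tilde h(n)$ to be monotone non-increasing.

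The main technical step, which I expect to be the principal obstacle, is the upper bound $\tilde h\le 2h$. For any admissible triple one may assume $a\le n\le b$; monotonicity then gives $h(a)\le h(n)$, and iterating sub-additivity on $b=qn+r$ with $0\le r<n$ yields
\[ h(b)\le q\,h(n)+h(r)\le \lceil b/n\rceil h(n)\le (b/n+1)h(n). \]
Using the chord identity $(1-\lambda)b=n-\lambda a$ I then compute
\[ \lambda h(a)+(1-\lambda)h(b) \le \lambda h(n)+(1-\lambda)(b/n+1)h(n) = \left(2-\tfrac{\lambda a}{n}\right)h(n) \le 2h(n), \]
so that taking the supremum over $(a,b,\lambda)$ gives $\tilde h(n)\le 2h(n)$ as desired.

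Finally, since $\tilde h(n)/n\le 2h(n)/n\to 0$ and the sequence $d_n$ is monotone non-increasing, any term $d_{n_0}<0$ would force $\tilde h(n)\to -\infty$ and contradict $\tilde h\ge 0$, while a strictly positive limit of $d_n$ would contradict $\tilde h(n)=o(n)$. Therefore $d_n\ge 0$ for every $n$ and $d_n\to 0$, which means $\tilde h$ is non-decreasing with vanishing consecutive differences, and $g:=\exp\circ\tilde h$ completes the proof.
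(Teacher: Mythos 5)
Your argument is correct, but it takes a genuinely different route from the paper. The paper also passes to $F:=\log f$, but instead of a concave envelope it uses the Ces\`aro average $G(n):=\frac1n\sum_{i=1}^n F(i)$ and sets $g:=\exp(2G)$: monotonicity of $F$ makes $G$ increasing with $G\le F$, sub-additivity gives $F(n)-G(n)\le G(n-1)\le G(n)$, hence $F\le 2G$, and the increment bound $G(n+1)-G(n)\le F(n+1)/(n+1)\to 0$ is immediate, with no appeal to concavity. Your least-concave-majorant construction buys the same factor $2$, with the work shifted into the chord estimate $\lambda h(a)+(1-\lambda)h(b)\le 2h(n)$ (which is correct: the division $b=qn+r$ together with monotonicity and the identity $(1-\lambda)b=n-\lambda a$ does exactly what is needed) and into the standard but only implicitly used fact that the two-point supremum is a concave function of $n$ (a one-line Carath\'eodory-type reduction in the plane shows four points can be replaced by two; this deserves a sentence, though it is not a gap). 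In exchange, your $g$ is log-concave, so the ratios $g(n+1)/g(n)$ decrease monotonically to $1$, slightly stronger than what the averaging construction gives; the paper's route is more elementary and purely computational, while yours isolates the structural reason the factor $2$ appears.
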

Note that we do not require $g$ is sub-multiplicative. 
\begin{exmp}
One may consider $f(n):=\exp\floor{\log_2 (n+1)}$, which can be seen to be sub-multiplicative,
monotone increasing and of sub-exponential growth.
The limit \[\lim_{n\to\infty}\frac{f(n+1)}{f(n)}\] does not exist, however. In this case, we set
\[g(n):=\exp({\log_2 (n+1)}),\]
which satisfies the desired properties in the above lemma.\end{exmp}
\bp[Proof of Lemma~\ref{l:sigma-upper}]
Let us consider the  nonnegative, sub-additive, monotone increasing function $F:=\log f$, so that
\[\lim_{n\to\infty} F(n)/n=0.\]
We will then define
\[G(n):=\frac1n\sum_{i=1}^n  F(i).\]

By the monotonicity of $F$, we have that $G$ is monotone increasing, and that $G(n)\le F(n)$. 
The sub-additivity of $F$ implies that 
\[F(n)-G(n) =\frac1n\sum_{i=1}^{n-1} \left(F(n)-F(i)\right)\le\frac{1}{n}\sum_{i=1}^{n-1}F(n-i) \le \frac{n-1}n G(n-1)\le G(n-1).\]


As $n\to\infty$ we finally have
\[G(n+1)-G(n)\le F(n+1)/(n+1)\to0.\]
We then see that $g(n):=\exp(2 G(n))$ is the desired function.\ep

Suppose we have an action $\rho\co G\to\Homeo^+(S^1)$, with a fixed base point $0\in S^1$. 
For each point $y\in\rho(G).0$, we define the \emph{orbit length of $y$ at $0$} as \[\|y\|_0:=\min\{j \mid s_j \cdots s_2 s_1(0)=y\text{ for some }s_i\in S\cup S^{-1}\}.\]
In the case the orbit $\rho(G).0$ is free, if $y=g(0)$ then we have $\|y\|_0=\|g\|_S$.
We can also define the \emph{orbit spherical growth function} analogously even when the orbit is not free; however, we will mainly deal with free orbits from now on.

\begin{cor}\label{c:stab0}
Let $G$ be a finitely generated group of sub-exponential growth.
If an action $\rho\co G\to \Diff_+^{1}(S^1)$ with a free orbit $\mathcal{O}:=\rho(G).0$ satisfies that
\[\sup_{g\in G}g'(0)<\infty,\]
then $\rho$ admits a sequence $\{\rho_k\co G\to\Diff_+^1(S^1)\}_{k\ge1}$ of nontrivial blow-ups at $\mathcal{O}$ such that
$\rho_k$ converges to $\rho$ in the $C^1$--topology.
\end{cor}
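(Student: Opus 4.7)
The plan is to reduce the corollary to Theorem~\ref{t:c1} by constructing a family of positive sequences $\{\ell^k_y\}_{y\in\mathcal{O}}$ satisfying its three hypotheses. Since the orbit $\mathcal{O}=\rho(G).0$ is free, each $y\in\mathcal{O}$ is uniquely of the form $y=g(0)$ for a single $g\in G$, and the chain rule gives $(sg)'(0)=s'(y)\cdot g'(0)$ for every generator $s$. This observation makes it natural to take $\ell^k_y$ proportional to $g'(0)$, so that the ratio $s'(y)\ell^k_y/\ell^k_{sy}$ in the hypothesis of Theorem~\ref{t:c1} collapses to a ratio of normalizing factors depending only on the word lengths $\|g\|_S$ and $\|sg\|_S$, which differ by at most one.

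To specify the normalizing factor, I would first extract from the sub-exponential growth of $G$ and Lemma~\ref{l:sigma-upper} a monotone increasing function $h\co\bN\to[1,\infty)$ satisfying $\bar\sigma(n)\leq h(n)$ and $h(n+1)/h(n)\to 1$ as $n\to\infty$. With $M:=\sup_{g\in G}g'(0)<\infty$, I would then set
\[\ell^k_y := \frac{g'(0)}{(\|g\|_S+k+1)^2\, h(\|g\|_S+k+1)}, \qquad y=g(0).\]
The quadratic factor $(\|g\|_S+k+1)^2$ supplies summability comparable to $\sum 1/n^2$, while the shift by $k$ inside the argument of $h$ is the crucial feature: it forces the denominators into the asymptotic regime $h(m+1)/h(m)\approx 1$ uniformly in $g$ once $k$ is large.

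With these choices, the three hypotheses of Theorem~\ref{t:c1} reduce to elementary estimates. For summability, $\sigma(n)\leq\bar\sigma(n)\leq h(n)\leq h(n+k+1)$ yields
\[\sum_{y\in\mathcal{O}}\ell^k_y \leq M\sum_{n=0}^{\infty}\frac{1}{(n+k+1)^2} \leq \frac{M}{k} \xrightarrow{k\to\infty} 0.\]
For the ratio condition, the chain rule reduces $s'(y)\ell^k_y/\ell^k_{sy}$ to
\[\frac{(\|sg\|_S+k+1)^2\, h(\|sg\|_S+k+1)}{(\|g\|_S+k+1)^2\, h(\|g\|_S+k+1)},\]
and since $|\|sg\|_S-\|g\|_S|\leq 1$, this quantity tends to $1$ both as $k\to\infty$ with $y$ fixed (the argument of $h$ escapes to infinity in $k$) and as $y$ ranges over $\mathcal{O}$ with $k$ fixed (the argument of $h$ escapes to infinity in $\|g\|_S$).

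The technical obstacle I anticipate is precisely the simultaneous control of these two limits: a naive product choice $\ell^k_y=g'(0)\cdot c_k(\|g\|_S)$ with factorized $k$- and $n$-dependence makes one of the two limits trivial but leaves the other governed by a frozen ratio that need not equal $1$. The shift by $k$ inside the argument of $h$ is exactly what breaks the product structure and resolves both limits at once. Granted this choice, Theorem~\ref{t:c1} delivers the desired sequence $\{\rho_k\}$ of nontrivial blow-ups of $\rho$ at $\mathcal{O}$ converging to $\rho$ in the $C^1$--topology, with no further input.
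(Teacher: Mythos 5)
Your proposal is correct and follows essentially the same route as the paper: the paper also sets $\ell^k_y = g'(0)/\nu(\|y\|_0+k)$ with $\nu(x)=x^2 p(x)$, where $p$ is the majorant of $\bar\sigma$ produced by Lemma~\ref{l:sigma-upper}, and then verifies the two hypotheses of Theorem~\ref{t:c1} exactly as you do (your shift by $k$ inside the argument is the same device, and your $h$ plays the role of $p$). No substantive difference.
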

Note that the group $G$ is not assumed to be abelian.
\bp[Proof of Corollary~\ref{c:stab0}]
By applying Lemma~\ref{l:sigma-upper} to the sub-multiplicative function
\[
\bar\sigma(n)=\sup_{i\le n}\sigma(i)\]
we obtain a monotone increasing function $p(n)\ge1$ such that $p(n+1)/p(n)\to1$ as $n\to\infty$,
and such that
\[\bar\sigma(n)\le p(n)\le  \bar\sigma(n)^2.\]
Pick a continuous monotone increasing function $\nu(x)$ such that $\int_1^\infty p/ \nu<\infty$ and \[\lim_{x\to\infty}\nu(x+1)/\nu(x)=1;\] the reader may assume $\nu(x) = x^2p(x)$. 
For each $y\in \mathcal{O}$, there uniquely exists $g\in G$ such that $y=g(0)$. Then we set
\begin{equation}\label{eq:lky}\ell^k_{y} := g'(0) /\nu(\|y\|_0+k).\end{equation}
We obtain
\[
\sum_{y\in\mathcal{O}} 
\ell^k_y
\le \left(\sup_{g\in G} g'(0)\right)\cdot \sum_{i=0}^\infty  
\frac{p(i+k)}{\nu(i+k)}\cdot \frac{\bar\sigma(i)}{p(i+k)}
\to0\text{ as }k\to\infty.\]

For a generator $s$ of $G$,  and for  $g\in G$, we have
$\abs*{\|sg(0)\|_0-\|g(0)\|_0}\le 1$. This implies for $y=g(0)$ that
\[
\frac{s'(y)\ell_{y}^k}{\ell^k_{sy}}
= \frac{\nu(\|sy\|_0+k)}{\nu(\|y\|_0+k)}
\to 1\]
as $k+\|y\|_0\to\infty$. The conclusion follows from Theorem~\ref{t:c1}.
\ep

\begin{rem}
\be
\item
When $G\cong\bZ$, then the above corollary was proved by Herman~\cite[Theorem X.3.1]{Herman1979}.
In this case, the condition
$\sup_{g\in G}g'(0)<\infty$ holds possibly after replacing $0$ by some point $x_0\in S^1$, as was established by Ma\~n\'e. 
\item
If a sub-exponential growth group faithfully acts on $S^1$ with $C^{1+\epsilon}$ diffeomorphisms for some $\epsilon>0$, then the group is actually virtually nilpotent~\cite{Navas2008GAFA}.
\ee
\end{rem}

\subsection{Exceptional $C^{1,\alpha}$--actions}

We are now ready to restate and prove the first of our main results.

\begin{thm}\label{t:c1al}
Let $d\in\bN$, and let $\alpha$ be a concave modulus satisfying 
\[
\int_0^1 1/\alpha(x)^ddx<\infty.\]
If $d>1$, we further assume 
\begin{equation}\label{eq:alphay}
\sup_{0<y<1}  \alpha\!\left( y^{d+1}/\alpha(y)^d\right)/y<\infty.\end{equation}
Suppose that a  finitely generated group $G$ admits an action
$\rho\co G\to \Diff_+^{1,\alpha}(S^1)$ with a free orbit $\mathcal{O}:=\rho(G).0$ such that  \[\sup_{g\in G} \abs*{\log g'(0)}<\infty.\]
Suppose furthermore that the spherical growth function of $G$ is at most $c n^{d-1}$ for some $c>0$.
Then $\rho$ admits a sequence of nontrivial blow-ups 
\[\{\rho_k\co G\to\Diff_+^{1,\alpha}(S^1)\}\]
at $\mathcal{O}$
such that $\rho_k\to\rho$ in the $C^1$--topology.
Moreover, we may require that
\[\sup_k\{[\rho_k(g)]_{\alpha}\}<\infty\] for all $g\in G$.
\end{thm}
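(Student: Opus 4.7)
The plan is to extend the construction of Corollary~\ref{c:stab0} by tuning the sequence of interval lengths $\{\ell_y^k\}$ finely enough to obtain a uniform $\alpha$--seminorm bound on the derivatives $\rho_k(g)'$, not merely $C^1$--convergence. Because $\mathcal{O}$ is a free orbit, each $y \in \mathcal{O}$ has a unique representative $g_y \in G$ with $y = g_y(0)$, and I would take
\[
\ell_y^k := g_y'(0)/\nu(\|y\|_0 + k)
\]
for a monotone continuous $\nu\co [1,\infty) \to [1,\infty)$ to be chosen. The hypothesis $\sup_g|\log g'(0)| < \infty$ makes $\ell_y^k$ comparable to $1/\nu(\|y\|_0+k)$ uniformly in $y$, so by the growth bound $\sigma(n)\le c n^{d-1}$ the summability hypothesis of Theorem~\ref{t:c1} reduces to $\sum_n n^{d-1}/\nu(n)<\infty$, and the ratio condition $s'(y)\ell_y^k/\ell_{sy}^k\to 1$ reduces to $\nu(n+1)/\nu(n)\to 1$, exactly as in the proof of Corollary~\ref{c:stab0}. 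For any such $\nu$, Theorem~\ref{t:c1} automatically yields a sequence of $C^1$--blow-ups $\rho_k$ converging to $\rho$ in the $C^1$--topology.

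The new content is to pin $\nu$ to $\alpha$ so that $\sup_k[\rho_k(g)]_\alpha < \infty$. Via the substitution $x = \alpha^{-1}(t)$, the hypothesis $\int_0^1 1/\alpha(x)^d\,dx < \infty$ is equivalent to an integrability statement for $\alpha^{-1}$ near zero that quantifies its rate of decay. Pairing this with the polynomial growth rate $n^{d-1}$ suggests calibrating $\nu$ so that $1/\nu(n)$ decays at the rate of $\alpha^{-1}$ evaluated at an appropriate inverse power of $n$; a direct Ces\`aro averaging, or an application of Lemma~\ref{l:sigma-upper}, can then be used to restore $\nu(n+1)/\nu(n) \to 1$ without spoiling either summability estimate. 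The precise calibration between $\nu$ and $\alpha$ will be forced by the $\alpha$--seminorm estimates of the next two paragraphs.

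With $\nu$ so fixed, I would bound $|\rho_k(g)'(t)-\rho_k(g)'(t')|/\alpha(|t-t'|)$ by splitting $S^1$ into the exceptional core $K^k := S^1\setminus\coprod_y I_y^k$ and the gaps $I_y^k$. When $t,t'\in K^k$, $\rho_k(g)'(t) = g'(H_k(t))$ and the bound follows from the $\alpha$--continuity of $g'$ together with the observation that the semi-conjugacy $H_k$ is $1$--Lipschitz on $K^k$, since collapsing gaps cannot increase distances between points of $K^k$. When $t,t'$ both lie in a single gap $I_y^k$, Lemma~\ref{l:Yoccoz} expresses $\rho_k(g)'$ there as $\phi(\ell_y^k,\ell_{gy}^k,R_y^k)'$ translated into $I_y^k$ with $R_y^k = \ell_{gy}^k/(g'(y)\ell_y^k)$, whose oscillation is of order $|1-R_y^k|$ on a bump of scale $\ell_y^k$; this can be compared to $\alpha(|t-t'|)$ using concavity of $\alpha$ and the $k$--uniform smallness of $|1 - R_y^k|$ guaranteed by the choice of $\nu$. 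Mixed configurations with exactly one endpoint in $K^k$ are handled by interpolating through the nearest gap endpoint.

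The main obstacle is the configuration in which $t$ and $t'$ lie in two distinct small gaps $I_y^k$ and $I_{y'}^k$ that are very close on the circle, for then $\alpha(|t-t'|)$ in the denominator is tiny while the per-gap oscillations across all intervening gaps must be tallied simultaneously against the $cn^{d-1}$ growth bound. The extra hypothesis~\eqref{eq:alphay}, a scale-invariant strengthening of the integrability of $1/\alpha^d$, is used precisely here: it furnishes the conversion between the single--gap scale $\ell_y^k$ and the pair scale $|t-t'|$ that is needed to close the summation in terms of $\alpha(|t-t'|)$. Once the uniform $\alpha$--seminorm bound is secured for every generator $s$, composition--closure of $C^{1,\alpha}$ propagates it to arbitrary $g \in G$, with constants depending only on $\|g\|_S$.
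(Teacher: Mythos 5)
Your skeleton is the same as the paper's (reuse Theorem~\ref{t:c1}/Corollary~\ref{c:stab0} with $\ell_y^k=g'(0)/\nu(\|y\|_0+k)$, then verify a uniform $\alpha$--seminorm bound by splitting into gap and core configurations), but the technical heart of the argument is missing. You never fix $\nu$, and the calibration you gesture at --- ``$1/\nu(n)$ decays like $\alpha^{-1}$ at an inverse power of $n$'' --- is the choice adapted to the hypothesis $\int_0^1\alpha^{-1}(t)/t^{d+1}\,dt<\infty$ of Theorem~\ref{t:int-comparison}, not to the hypothesis $\int_0^1 1/\alpha^d\,dx<\infty$ assumed here; with that choice neither summability nor the seminorm bound is forced by the present hypotheses. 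The choice that works is $\nu(x)=x^{d+1}\alpha(1/x)^d$, for which $\sum_n n^{d-1}/\nu(n)\asymp\int_0^1 dx/\alpha(x)^d<\infty$, and the whole difficulty is then to prove that
\[
A(s,k,y):=\frac{1}{\alpha(\ell_y^k)}\left|1-\frac{\ell_{sy}^k}{s'(y)\ell_y^k}\right|
\]
is bounded uniformly in $s,k,y$; this requires a mean-value estimate on $\nu$, the monotonicity of $\nu(x)/x$, a separate argument for $d=1$, and hypothesis~\eqref{eq:alphay} precisely when $d\ge2$. None of this appears in your proposal (you defer it to the seminorm paragraphs, which assume it). Incidentally, with the correct $\nu$ the ratio condition $\nu(n+1)/\nu(n)\to1$ falls out of the bound on $A(s,k,y)$; no Ces\`aro averaging or appeal to Lemma~\ref{l:sigma-upper} (which concerns sub-multiplicative growth functions, not $\nu$) is needed.

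Your identification of the ``main obstacle'' --- pairs $t,t'$ in distinct nearby gaps, requiring a tally of oscillations over all intervening gaps against the $cn^{d-1}$ growth, and your claim that~\eqref{eq:alphay} is used exactly there --- misreads the geometry. Since the bump $\xi(R)$ vanishes at the endpoints of $[0,1]$, the blown-up derivative $\rho_k(s)'$ agrees with $s'\circ H_k$ at every gap endpoint and on the minimal set; hence a pair lying in two distinct gaps (or a mixed pair) is handled by a three-term triangle inequality through the nearest gap endpoints, each term falling into one of the two basic cases and each subinterval having length at most $|t-t'|$, giving a bound $3C\alpha(|t-t'|)$ with no summation over intervening gaps at all. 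The role of~\eqref{eq:alphay} is therefore not where you place it, and the place where it is genuinely needed (the single-gap distortion bound above, $d\ge2$) is left unproved. Two smaller slips: $H_k$ is not $1$--Lipschitz on the core --- collapsing the gaps makes $H_k$ expand distances on $K^k$ by a factor up to $(1-L_k)^{-1}$, which is $\le 2$ for $k$ large and suffices --- and the same-gap case should use $|u-v|/\ell_y^k\le\alpha(|u-v|)/\alpha(\ell_y^k)$ (monotonicity of $x/\alpha(x)$) to convert the bump's Lipschitz oscillation into an $\alpha$--modulus bound, which is exactly where the uniform bound on $A(s,k,y)$ is consumed.
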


\begin{rem}
The proof of Theorem~\ref{t:c1al} is rather technical, and the reader interested  in single exceptional
diffeomorphisms may wish to dispense with the
generality afforded by assuming $d\neq 1$, as well as more general group actions.
For the benefit of such readers, we have included a proof of
Theorem~\ref{t:int-alpha} in Section~\ref{s:appendix} below.
This proof also serves as a detailed guide for understanding the proof of Theorem~\ref{t:c1al}.
\end{rem}

Note that a group $G$ satisfying the hypotheses of Theorem~\ref{t:c1al}
 is necessarily virtually nilpotent, by Gromov's theorem on groups of polynomial word growth. The reader
may wonder what the precise difference between the spherical growth function and the \emph{word growth function} is,
the latter of which measures the number of words of length at most $n$ with respect to some generating set,
and whose degree is independent of the generating set (cf.~\cite{mann-book}, for instance, for background).

If $G$ is a finitely generated abelian group of rank $d$ then the word growth function is polynomial of degree $d$, and the spherical
growth  function is polynomial of degree $d-1$, as is an easy exercise. For a general group $G$ of polynomial word growth, Gromov's
theorem allows us to assume that $G$ is torsion--free and nilpotent, and that if $\gamma_i(G)$ denotes the $i^{th}$ term of the lower central
series of $G$, then $\gamma_i(G)/\gamma_{i+1}(G)$ is torsion--free for all $i$. In this case, for groups of nilpotence class two (i.e.
$\gamma_3(G)=\{1\}$, where here we adopt the convention $\gamma_1(G)=G$), it is also true that if the degree of the word growth
function is $d$ then the spherical growth function has degree $d-1$. This fact can be established by explicit computation~\cite{Stoll1998}; see the proof
of Theorem 4.2 in~\cite{mann-book}. In general, one has a bound which appears to be related in a subtle
way to the error term in computing the exact word growth function. Namely:

\begin{prop}[See~\cite{BLD}, Corollary 11]
Let $G$ be a nilpotent group such that $\gamma_{r+1}(G)=\{1\}$
and with polynomial word growth of degree $d$ with respect to a generating set $S$.
Then there are constants $C_1$ and $C_2$, depending on $S$, such that
\[C_1 n^{d-1}\leq \sigma(n)\leq C_2 n^{d-\epsilon},\] where $\epsilon=1$ for $r=2$ and $\epsilon=2/3r$ otherwise.
\end{prop}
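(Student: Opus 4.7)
My plan is to derive both bounds from the Pansu--Breuillard--Le Donne asymptotic for the ball growth function, which is the principal content of~\cite{BLD}. For $G$ nilpotent of step $r$ with polynomial word growth of degree $d$, that result provides the asymptotic formula
\[
|B_S(n)| = c_0 n^d + O\!\left(n^{d-\epsilon}\right),
\]
with $c_0 = c_0(G,S)>0$ and exponent $\epsilon$ exactly as in the statement: $\epsilon = 1$ for $r = 2$ (Stoll~\cite{Stoll1998}) and $\epsilon = 2/(3r)$ in general. Thus the entire proposition reduces to a careful finite-difference analysis of this ball asymptotic, together with a separate geometric lower bound.

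Given the asymptotic, the upper bound follows from a finite-difference expansion. Writing $\sigma(n) = |B_S(n)| - |B_S(n-1)|$ and expanding the leading term produces
\[
\sigma(n) = c_0 d\, n^{d-1} + O(n^{d-2}) + O(n^{d-\epsilon}).
\]
Since $\epsilon \le 1$, the error term $O(n^{d-\epsilon})$ dominates the polynomial main term, and the stated upper bound $\sigma(n) \le C_2 n^{d-\epsilon}$ follows immediately.

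For the lower bound, I would combine the telescoping identity
\[
\sum_{k=1}^{n} \sigma(k) = |B_S(n)| - 1 \ge c\, n^d,
\]
which uses only the classical Bass--Guivarc'h lower bound on $|B_S(n)|$, with the geometric fact that the rescaled ball $\tfrac{1}{n}B_S(n)$ converges in Hausdorff distance to the unit ball $\Omega$ of the Carnot--Carath\'eodory metric on the graded Lie group $G_\infty$ associated with $G$. Since $\partial\Omega$ has positive $(d-1)$-dimensional Hausdorff measure, the quantitative convergence established in~\cite{BLD} guarantees that the discrete annulus $B_S(n) \setminus B_S(n-1)$ contains at least $C_1 n^{d-1}$ elements for every $n$.

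The main obstacle is obtaining the lower bound pointwise in $n$ rather than merely on average. An averaged lower bound on $\sigma$ of order $n^{d-1}$ follows trivially from the Bass--Guivarc'h estimate $|B_S(n)| \ge c_0 n^d$, but ruling out pointwise dips requires a uniform geometric control on the discrete ball. This geometric input, namely the Lipschitz regularity of $\partial\Omega$ and the uniform rate of convergence of $\tfrac{1}{n}B_S(n)$ to $\Omega$, is precisely the delicate technical content of~\cite{BLD}; the resulting gap between $n^{d-1}$ in the lower bound and $n^{d-\epsilon}$ in the upper bound for $r > 2$ reflects a genuine open phenomenon in the asymptotic geometry of nilpotent groups rather than a slackness in the argument.
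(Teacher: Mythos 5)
The paper does not actually prove this proposition; it is quoted verbatim from \cite{BLD}, Corollary 11, so there is no internal argument to compare yours against, and your proposal has to be judged on its own merits. Your upper bound is fine: granting the Breuillard--Le Donne asymptotic $|B_S(n)|=c_0n^d+O(n^{d-\epsilon})$ (with Stoll's $\epsilon=1$ in step $2$), the difference $\sigma(n)=|B_S(n)|-|B_S(n-1)|$ is $c_0dn^{d-1}+O(n^{d-2})+O(n^{d-\epsilon})$, and since $\epsilon\le 1$ this is $O(n^{d-\epsilon})$.

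The lower bound, however, has a genuine gap. Hausdorff convergence of $\tfrac1n B_S(n)$ to the CC-unit ball $\Omega$ cannot by itself control the cardinality of a single unit-thickness sphere: the rescaled sets $\tfrac1n B_S(n)$ and $\tfrac1n B_S(n-1)$ have the same limit, so no statement about the limit shape distinguishes them, and the quantitative rate in \cite{BLD} is of order $n^{1-2/(3r)}\gg 1$ in the unrescaled metric, i.e.\ it only sees annuli of thickness $\gg n^{1-2/(3r)}$, not thickness $1$. Moreover, ``$\partial\Omega$ has positive $(d-1)$-dimensional Hausdorff measure'' plus Hausdorff-metric closeness yields no point count whatsoever: Hausdorff distance carries no measure or cardinality information. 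So the sentence asserting that the quantitative convergence ``guarantees'' $|B_S(n)\setminus B_S(n-1)|\ge C_1n^{d-1}$ for every $n$ is precisely the pointwise statement to be proved, not a consequence of what you have quoted; your own telescoping identity only gives the bound on average. A correct and elementary route is isoperimetric: the outer vertex boundary of $B_S(n-1)$ is contained in the sphere $S(n)$, and the Coulhon--Saloff-Coste inequality combined with the two-sided Bass--Guivarc'h bounds $cn^d\le |B_S(n)|\le Cn^d$ gives
\[
\sigma(n)\;\ge\;\bigl|\partial B_S(n-1)\bigr|\;\ge\;c'\,|B_S(n-1)|^{(d-1)/d}\;\ge\;C_1\,n^{d-1}
\]
pointwise in $n$, which is the missing half of the proposition.
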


Thus, even for general finitely generated nilpotent group, the precise relationship between the word growth function and the spherical
growth function seems subtle, and thus we elect to state our  results in terms of the spherical growth function as opposed to the more
common word growth function.

We return to the discussion of Theorem~\ref{t:c1al}.
In the case of minimal actions, the boundedness of logarithms of derivatives gives a strong restriction on possible actions:
\begin{prop}[{cf. \cite[Chapter IV]{Herman1979} for the case $\Gamma=\bZ$}]\label{p:GoHe}
Let $\Gamma$ be a subgroup of $\Diff_+^1(S^1)$.
If the action of $\Gamma$ is minimal, and if 
 \[\sup_{g\in \Gamma} \abs*{\log g'(x_0)}<\infty\]
for some $x_0\in S^1$, then $\Gamma$ is $C^1$--conjugate into the rotation group.
\end{prop}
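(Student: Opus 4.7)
The plan is to construct a continuous, positive, $\Gamma$--invariant density $\bar\rho$ on $S^1$ and then integrate it to produce a $C^1$ diffeomorphism $H$ that conjugates $\Gamma$ into $\SO(2,\bR)$. First I will upgrade the pointwise hypothesis $M := \sup_{g\in\Gamma} |\log g'(x_0)| < \infty$ to a uniform bound $\sup_{g \in \Gamma} \|\log g'\|_\infty \le 2M$. The identity $h'(g(x_0)) = (hg)'(x_0)/g'(x_0)$ controls each $h'$ at every point of the orbit $\Gamma\cdot x_0$; by minimality this orbit is dense, and the continuity of $h'$ promotes the pointwise estimate to a uniform bound on all of $S^1$.

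Next I will define $\rho\colon \Gamma\cdot x_0 \to (0,\infty)$ by $\rho(g(x_0)) := 1/g'(x_0)$. Well-definedness rests on a stabilizer argument: for $s \in \Stab(x_0)$, the chain rule yields $(s^n)'(x_0) = s'(x_0)^n$, and this must remain in $[e^{-2M}, e^{2M}]$ for every $n \in \bZ$, forcing $s'(x_0) = 1$. Assuming $\rho$ extends to a continuous function $\bar\rho\colon S^1 \to [e^{-2M}, e^{2M}]$, I would then set $H(x) := C \int_{x_0}^x \bar\rho(t)\,dt \pmod{\bZ}$ for an appropriate normalizing constant $C$. The cocycle identity $\bar\rho(g(x)) g'(x) = \bar\rho(x)$ holds on $\Gamma\cdot x_0$ by direct computation, and hence on all of $S^1$ by continuity; this forces $(H \circ g \circ H^{-1})' \equiv 1$ for every $g \in \Gamma$, so $H$ indeed conjugates $\Gamma$ into the rotation group.

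The hard part will be proving that $\rho$ extends continuously, i.e.\ that $\rho$ is uniformly continuous on $\Gamma\cdot x_0$. Writing $\rho(g(x_0))/\rho(h(x_0)) = 1/(gh^{-1})'(h(x_0))$, this reduces to the following uniform statement: for every $\epsilon > 0$ there exists $\delta > 0$ such that for all $k \in \Gamma$ and $z \in S^1$ with $|k(z) - z| < \delta$, one has $|\log k'(z)| < \epsilon$. The intuition is that if $k$ almost fixes $z$ with $k'(z) = r > 1+\epsilon'$, then near $z$ the map $k$ is locally expanding at rate $r$, so iterating $k$ produces $(k^n)'(z) \approx r^n$ for as many iterations as the orbit $\{k^i(z)\}$ remains in a neighborhood of $z$ on which $k'$ is close to $r$. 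Since each $k^n \in \Gamma$ is subject to the uniform bound $\|\log (k^n)'\|_\infty \le 2M$ from the first step, this would force $r = 1$. Making this quantitative in the absence of uniform $C^{1,\alpha}$--bounds requires a separate Arzel\`a--Ascoli extraction applied to any hypothetical offending sequence $(k_n, z_n)$, to produce a limit homeomorphism fixing a point at which one still controls the derivative ratios, contradicting the uniform distortion bound.
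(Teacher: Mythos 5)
Your first two steps and the final integration step are sound, and they mirror the paper's construction: your density $\bar\rho$ is exactly $e^{\phi+c}$, where $\phi$ is the continuous function with $\log g'(x)=\phi(x)-\phi(g(x))$ that the paper extracts, and your $H$ is the paper's $h(x)=\int_0^x e^{\phi(t)+c}\,dt$. The gap is precisely the step you label as the hard part: the continuous extension of $\rho$ from the orbit to $S^1$, equivalently the statement that $\log g'$ is a continuous coboundary. This is exactly the content of the equivariant Gottschalk--Hedlund lemma (Lemma 3.6.10 of Navas, quoted and applied to the cocycle $c(g,x)=\log g'(x)$ in the paper), and it is not something your sketch actually establishes. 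The local-expansion heuristic needs $k'$ to stay close to $k'(z)=r$ on a neighborhood of $z$ of a size independent of $k$, but the hypotheses give only a uniform $C^0$ bound $e^{-2M}\le k'\le e^{2M}$; there is no uniform modulus of continuity for the derivatives over $k\in\Gamma$, so the window on which $(k^n)'(z)\approx r^n$ can be sustained may shrink with $k$, and no contradiction with $\|\log (k^n)'\|_\infty\le 2M$ results. The Arzel\`a--Ascoli fallback fares no better: the family is uniformly Lipschitz, so a subsequence $k_n\to k_\infty$ converges only uniformly ($C^0$), and uniform convergence carries no information about the values $k_n'(z_n)\ge 1+\epsilon'$; the limit is merely a bi-Lipschitz homeomorphism fixing a point, which is perfectly consistent with the uniform distortion bound. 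So the ``offending sequence'' produces no contradiction, and the key claim remains unproven.

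To repair the argument you essentially have to prove Gottschalk--Hedlund in this setting, whose standard proof is of a different nature: one considers the skew action on $S^1\times\bR$, $(x,t)\mapsto(g(x),t+\log g'(x))$, notes that the boundedness hypothesis at $x_0$ gives a bounded, hence precompact, orbit, takes a minimal closed invariant subset of its closure, and shows via minimality of the base action and the translation structure in the fibre that this set is the graph of a continuous function $\phi$; then $\log g'=\phi-\phi\circ g$. Minimality enters in an essential, global way there, not merely through density of one orbit and a pointwise iteration at nearly fixed points. Once you have $\phi$ (equivalently, your continuous $\bar\rho$), the rest of your write-up goes through verbatim and coincides with the paper's conclusion.
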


For the proof of this proposition, we employ an equivariant version of the \emph{Gottschalk--Hedlund Lemma}.
For a group $\Gamma$ acting on a space $M$, a \emph{continuous 2--cocycle} is a map
\[c\co \Gamma\times M\to\bR\]
such that $x\mapsto c(g,x)$ is continuous,
and such that  \[c(fg,x) = c(g,x)+c(f,gx).\]

\begin{lem}[{\cite[Lemma 3.6.10]{Navas2011}}]
Suppose that a group $\Gamma$ acts minimally on a compact metric space $M$.
Assume that $c\co \Gamma\times M\to\bR$ is a continuous 2-cocycle.
Then the following are equivalent.
\be[(i)]
\item We have $\sup_{g\in\Gamma} |c(g,x_0)|<\infty$ for some $x_0\in M$;
\item There exists a continuous map $\phi\co M\to\bR$ such that 
\[c(g,x) = \phi(x) - \phi\circ g(x)\] for all $g\in\Gamma$ and $x\in M$.
\ee
\end{lem}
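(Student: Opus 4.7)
The plan is to follow the classical Gottschalk--Hedlund strategy adapted to a group action. The implication (ii)$\Rightarrow$(i) is immediate, since $|c(g,x_0)|=|\phi(x_0)-\phi(gx_0)|\le 2\|\phi\|_\infty$, and $\phi$ is bounded by compactness of $M$. So I focus on (i)$\Rightarrow$(ii), for which I would consider the skew-product action of $\Gamma$ on $M\times\bR$ defined by
\[ g\cdot(x,t):=(gx,\,t-c(g,x)). \]
The cocycle identity $c(fg,x)=c(g,x)+c(f,gx)$ together with $c(e,x)=0$ (which follows from the cocycle identity itself) ensure this is a genuine left action, and each element acts as a homeomorphism because $x\mapsto c(g,x)$ is continuous. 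By hypothesis (i) the orbit $\Gamma\cdot(x_0,0)=\{(gx_0,-c(g,x_0))\co g\in\Gamma\}$ has bounded second coordinate, so its closure $K\subseteq M\times\bR$ is compact and $\Gamma$-invariant.

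Apply Zorn's lemma to obtain a nonempty closed $\Gamma$-invariant minimal subset $Y\subseteq K$. The projection $\pi(Y)\subseteq M$ is closed and $\Gamma$-invariant, so minimality of $\Gamma$ on $M$ forces $\pi(Y)=M$. The main step is to prove that $\pi|_Y\co Y\to M$ is injective, so that $Y$ is the graph of a function $\phi\co M\to\bR$. For each $s\in\bR$, the vertical translation $T_s(x,t)=(x,t+s)$ commutes with the skew-product $\Gamma$-action, hence $T_s(Y)$ is again a compact minimal $\Gamma$-invariant subset of $M\times\bR$. Suppose for contradiction that $(x,t_1),(x,t_2)\in Y$ with $s=t_2-t_1>0$; then $T_s(Y)\cap Y$ is nonempty, closed and $\Gamma$-invariant, so minimality of $Y$ and of $T_s(Y)$ forces $T_s(Y)=Y$, and iterating yields $T_{ns}(Y)=Y$ for every $n\in\bZ$, contradicting the boundedness of the second coordinate on $Y$.

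It follows that $Y$ is the graph of a function $\phi\co M\to\bR$, and since $\pi|_Y$ is then a continuous bijection between compact Hausdorff spaces, $\phi$ is continuous. Finally, $\Gamma$-invariance of $Y$ gives $g\cdot(x,\phi(x))=(gx,\phi(gx))$ for every $g$ and $x$, which unwinds to $c(g,x)=\phi(x)-\phi\circ g(x)$, the desired coboundary identity. I expect the graph step---the translation-and-minimality argument for injectivity of $\pi|_Y$---to be the only substantive obstacle, as the remainder of the proof is standard bookkeeping with the cocycle identity and compactness.
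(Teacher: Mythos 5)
Your proof is correct and is essentially the classical skew-product/minimal-set argument of Gottschalk--Hedlund, which is exactly the proof the paper relies on by citing \cite[Lemma 3.6.10]{Navas2011} (the paper gives no proof of its own, only the remark that the finite generation hypothesis there is superfluous --- and indeed your argument nowhere uses it). No gaps: the translation-commutation step and the minimality argument for injectivity of the projection are handled correctly.
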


\begin{rem}
In~\cite{Navas2011}, the above lemma is stated with a finite generation hypothesis for $\Gamma$. However, it is apparent from the proof therein that this hypothesis is unnecessary. See~\cite{CNP13} for a further generalization of the lemma.
\end{rem}

\bp[Proof of Proposition~\ref{p:GoHe}]
Applying the Gottschalk--Hedlund Lemma to 
\[c(g,x) := \log g'(x),\]
we obtain a continuous map $\phi\co S^1\to\bR$ such that 
\[\log g'(x) = \phi(x) - \phi\circ g(x).\]
As in~\cite[Chapter IV]{Herman1979}, we define $h\in\Diff_+^1(S^1)$ by
\[
h(x) :=\int_0^x e^{\phi(t)+c}dt\pmod\bZ.\]
Here, $c$ is chosen so that $h(1) = 1$. For each $g\in \Gamma$, we can verify
\[h'\circ g(x) \cdot g'(x) = e^{\phi\circ g(x)+c}\cdot e^{\phi(x)-\phi\circ g(x)}=e^{\phi(x)+c}=h'(x).\]
This implies that $(hgh^{-1})'=1$, so that $hgh^{-1}\in\SO(2,\bR)$.
\ep

\bp[Proof of Theorem~\ref{t:c1al}]
As we have noted in Remark~\ref{rem:alpha-diff}, we may assume that $\alpha$ is differentiable. 
We retain the notation used in Corollary~\ref{c:stab0}, setting
\[p(x):=x^{d-1},\quad \nu(x) := x^{d+1}\alpha(1/x)^d.\]
Note that $\int_1^\infty p/\nu = \int_0^1 1/ \alpha^d<\infty$.
By convexity, we see that $x/\alpha(x)$ is monotone increasing, and so is $\nu(x)/x$. 

We define $\ell_y^k$ as in the equation~\eqref{eq:lky}.
In the proof of Corollary~\ref{c:stab0}, we observed that the hypothesis $\int_1^\infty p/\nu<\infty$ implies that \[\sum_{y\in\mathcal{O}}\ell_y^k\to0.\]
For each generator $s$ of $G$, each point $y\in \mathcal{O}$, and each $k\gg0$, we define
 \[A(s,k,y):=\frac1{\alpha(\ell_y^k)}\abs*{1-\frac{\ell^k_{sy}}{s'(y)\ell_y^k}}.\]
Let  $\lesssim$ denote an inequality up to bounded errors and scaling. 

\begin{claim*}
$\{A(s,k,y)\}_{s,k,y}$ is uniformly bounded.
\end{claim*}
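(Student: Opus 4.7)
The plan is to evaluate $A(s,k,y)$ explicitly and bound it by combining concavity estimates for $\alpha$ with the hypothesis~(\ref{eq:alphay}). Writing $y = g(0)$ for the unique $g\in G$ with $\|g\|_S = \|y\|_0$, set $n := \|y\|_0 + k$ and $n' := \|sy\|_0 + k$. Freeness of the orbit and the fact that $s$ is a generator give $|n-n'|\le 1$. The chain rule $(sg)'(0) = s'(y)\,g'(0)$ combined with the definition of $\ell^k$ yields $\ell_{sy}^k/(s'(y)\,\ell_y^k) = \nu(n)/\nu(n')$, and hence
\[
A(s,k,y) = \frac{|1 - \nu(n)/\nu(n')|}{\alpha(\ell_y^k)}.
\]
The case $n=n'$ is trivial ($A=0$), so henceforth I assume $|n'-n|=1$.

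To bound the numerator, I compute
\[
(\log\nu)'(t) = \frac{1}{t}\bigl[(d+1) - d\,\psi(1/t)\bigr], \qquad \psi(u) := \frac{u\alpha'(u)}{\alpha(u)}.
\]
Concavity of $\alpha$ together with $\alpha(0)=0$ forces $\psi(u)\in [0,1]$, whence $(\log\nu)'(t)\in [1/t,\,(d+1)/t]$. Integrating over $[\min(n,n'),\max(n,n')]$ and applying $|1-e^{-x}|\le x$ then gives $|1 - \nu(n)/\nu(n')| \lesssim 1/n$ for all $n$ large.

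For the denominator, the uniform bound on $|\log g'(0)|$ together with the concavity of $\alpha$ yields $\alpha(\ell_y^k) \asymp \alpha(1/\nu(n))$ up to multiplicative constants. Writing $u = 1/n$, one has $1/\nu(n) = u^{d+1}/\alpha(u)^d$, which is precisely the argument appearing in~(\ref{eq:alphay}). When $d=1$, the inequality $1/\nu(n) \le u$ holds automatically (using $\alpha(u)\ge \alpha(1)u$), and the monotonicity of $z\mapsto \alpha(z)/z$ applied at $z=1/\nu(n)$ immediately gives $\alpha(1/\nu(n)) \gtrsim 1/n$. When $d>1$, the hypothesis~(\ref{eq:alphay}) furnishes the upper bound $\alpha(1/\nu(n)) \le C/n$, but the matching lower bound $\alpha(1/\nu(n)) \gtrsim 1/n$ is also needed and does not follow from concavity alone; the idea is that (\ref{eq:alphay}) combined with the integrability $\int_0^1 1/\alpha^d<\infty$ restricts $\alpha$ to behave like $u^{1/d}$ times a slowly varying factor near $0$, and for such $\alpha$ a direct calculation shows $\alpha(1/\nu(n)) \asymp 1/n$.

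Combining the two estimates yields $A(s,k,y) \lesssim 1$ uniformly in $s,k,y$, proving the claim. The hardest step to justify rigorously will be the lower bound on $\alpha(1/\nu(n))$ in the case $d>1$, where the hypothesis~(\ref{eq:alphay}) and the integrability must be interwoven carefully with the concavity of $\alpha$ to pin down the asymptotic shape of $\alpha$ near $0$; by contrast, the numerator bound and the $d=1$ argument are essentially immediate from the mean value theorem and concavity.
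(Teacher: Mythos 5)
Your reduction of the claim to the two estimates $\bigl|1-\nu(n)/\nu(n')\bigr|\lesssim 1/n$ and $\alpha(\ell_y^k)\asymp\alpha(1/\nu(n))$ is correct and coincides with the paper's argument: your log-derivative computation $(\log\nu)'(t)=\tfrac1t\bigl[(d+1)-d\,\psi(1/t)\bigr]$ with $\psi\in[0,1]$ is the same as the paper's bound $t\nu'(t)\in[\nu(t),(d+1)\nu(t)]$, and your $d=1$ argument via monotonicity of $z\mapsto\alpha(z)/z$ at $z=1/\nu(n)$ is exactly the paper's observation that $\nu(x)/x$ is increasing, which gives $\tfrac{1}{j\,\alpha(1/\nu(j))}=\tfrac{\nu(j)^2}{j\,\nu\circ\nu(j)}\le1$.

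The genuine gap is the case $d>1$. You correctly isolate what is needed, namely the lower bound $\alpha(1/\nu(n))\gtrsim 1/n$, but you then assert that \eqref{eq:alphay} together with $\int_0^1\alpha^{-d}<\infty$ ``restricts $\alpha$ to behave like $u^{1/d}$ times a slowly varying factor,'' from which an unspecified ``direct calculation'' yields $\alpha(1/\nu(n))\asymp 1/n$. That structural assertion is not proved anywhere in your argument, and it cannot simply be taken for granted: concavity, integrability of $1/\alpha^d$, and a one-sided bound on $\alpha\bigl(y^{d+1}/\alpha(y)^d\bigr)/y$ do not pin down the pointwise asymptotic shape of a concave modulus at all scales (such moduli can interpolate between different power-type behaviours on disjoint ranges of scales). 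Moreover, no such detour is needed: substituting $y=1/j$ turns $y^{d+1}/\alpha(y)^d$ into exactly $1/\nu(j)$, so the inequality you need, $\alpha(1/\nu(j))\gtrsim 1/j$, \emph{is} the hypothesis \eqref{eq:alphay} evaluated at $y=1/j$, read as a bound on the ratio $\alpha\bigl(y^{d+1}/\alpha(y)^d\bigr)/y$; this is precisely how the paper disposes of the case $d\ge 2$ in one line. You are right to notice the orientation issue --- the displayed supremum, taken literally, controls $\alpha(1/\nu(j))$ from above by $C/j$, whereas the proof uses the ratio being bounded away from zero --- but the remedy is to invoke the hypothesis in the orientation the argument requires, not to reconstruct it from an unproved asymptotic normal form for $\alpha$. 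As written, your $d>1$ step is a sketch resting on an unjustified claim, so the proof is incomplete there, while the remaining steps are correct and follow the paper's route.
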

We remind the reader that in Appendix~\ref{s:appendix} , there is  a proof of the theorem in the case $d=1$ and  $G=\bZ$.
The  reader will find the analogous claim in  that proof, and may find it helpful to understand the workings of the present argument.

To see this claim in the given generality, we first note \[\alpha(cx)\ge  \min(1,c)\alpha(x),\] for $c,x>0$. 
Letting $j:=\|sy\|_0+k$, we have 
 \[
A(s,k,y)
\lesssim\frac{1}{\alpha(1/\nu(j))}\abs*{1- \frac{\nu(j)}{\nu(j\pm1)}}
\le \frac{ \nu'(t)}{\nu(j-1)\alpha(1/\nu(j))}.
\]
Here, $t\in(j-1,j+1)$ is chosen so that \[\nu'(t) = \nu(j\pm1)-\nu(j).\]
We compute
\[
x\nu'(x)=\left(d+1 - d\alpha'(1/x)/(x\alpha(1/x))\right)\nu(x)\in [\nu(x),(d+1)\nu(x)].
\]
Also, note that \[\nu(x/2)= (x/2)^{d+1}\alpha(2/x)^d\ge \nu(x)/2^{d+1}.\]
Assuming $k\gg0$, we see
\[ 
A(s,k,y)\lesssim \frac{ \nu(j+1)}{j\nu((j+1)/2)\alpha(1/\nu(j))}\lesssim\frac{1}{j\alpha(1/\nu(j))}. \]
If $d\ge 2$, then the equation \eqref{eq:alphay} implies the claim.
If $d=1$, then we recall that $\nu(x)/x$ is increasing and that
\[
\frac{1}{j\alpha(1/\nu(j))}=\frac{\nu(j)^2}{j\nu\circ\nu(j)}\le1.
\]
So the claim is proved.

Note that the claim trivially implies that 
$\lim_{x\to\infty}\nu(x+1)/\nu(x)=1$. Hence, we have actions $\{\rho_k\}$ satisfying the conclusion of Corollary~\ref{c:stab0}.

It only remains to show that $\sup_k [\rho_k(s)']_\alpha<\infty$ for each generator $s$ of $G$.
We recall the construction of $\rho_k$ from the proof of Theorem~\ref{t:c1}.
Suppose first that $u,v\in\bar I_y$ for some $y\in \mathcal{O}$.
Then we have
\[
\abs*{\rho_k(s)'(u)-\rho_k(s)'(v)}
\le \abs*{s'(y)} \cdot 
\abs*{1-\frac{\ell^k_{sy}}{s'(y)\ell_{s}^k}}
\cdot \|\xi\|_\infty\cdot \frac{|u-v|}{\ell_y^k}.\]
Since $|u-v|\le\ell_y^k$, we have that
\[
|u-v|/\ell_y^k \le \alpha(|u-v|)/\alpha(\ell_y^k).\]
The above claim implies that 
\[ \abs*{\rho_k(s)'(u)-\rho_k(s)'(v)}\le C \alpha(|u-v|)\]
for some constant $C$ independent of $s,k$ and $y$. Here, we are using the finite generation hypothesis. 

Let us now assume $u,v\not\in\coprod_{y\in\mathcal{O}} I_y$. 
We recall the definition of $\mu_k$, given by \[m(H_k^{-1}(A))=\mu_k(A)\] for each Borel set $A$.
For $k\gg0$ and \[L_k=\sum_{y} \ell_y^k<1/2,\] we see
\[ |H_k(u)-H_k(v)|\le  |u-v|/ (1-L_k)\le 2 |u-v|.\]
So, by enlarging $C$ if necessary for all $k\gg0$ we have
\begin{align*}
\abs*{\rho_k(s)'(u)-\rho_k(s)'(v)}
&= \abs*{ s'\circ H_k(u)-s'\circ H_k(v)}
\le [s']_\alpha  \alpha( H_k(u)-H_k(v))\\
&\le C \alpha(u-v).\end{align*}
Finally, we conclude that $[\rho_k(s)']_\alpha\le 3 C$.
\ep


\subsection{Applications to free abelian groups and Theorem~\ref{t:int-alpha}}
Let us note immediate corollaries of Theorem~\ref{t:c1al} for free abelian groups.

\begin{cor}\label{c:higher-rank}
Let $d\ge1$, and let $\alpha$ be a concave modulus such that \[\int_0^1\frac{1}{\alpha^d}\, dx<\infty.\]
If $d>1$, we further assume 
\[\sup_{0<y<1}  \alpha\!\left( y^{d+1}/\alpha(y)^d\right)/y<\infty.\]
If $\rho\co \bZ^d\to\Diff_+^{1,\alpha}(S^1)$ is a faithful representation  that is $C^1$--conjugate into the rotation group $\SO(2,\bR)$, then there exists a sequence 
\[\rho_k\co \bZ^d\to\Diff_+^{1,\alpha}(S^1)\]
of nontrivial blow-ups of $\rho$ such that $\rho_k\to\rho$ in the $C^1$--topology.
\end{cor}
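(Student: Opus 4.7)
The plan is to derive Corollary~\ref{c:higher-rank} as a direct specialization of Theorem~\ref{t:c1al} to $G = \bZ^d$. Both the integrability hypothesis $\int_0^1 1/\alpha^d\,dx < \infty$ and (when $d > 1$) the supremum condition on $\alpha$ appear identically in Theorem~\ref{t:c1al}, so the task reduces to verifying the three remaining hypotheses there: a polynomial spherical growth bound of degree $d-1$, the freeness of the orbit $\rho(\bZ^d) \cdot 0$, and the uniform boundedness of $|\log \rho(g)'(0)|$ over $g \in \bZ^d$.

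The growth bound is standard: counting elements of word length exactly $n$ in $\bZ^d$ with respect to the standard generators amounts to counting lattice points on the boundary of the $\ell_1$-ball of radius $n$, which grows like $c n^{d-1}$. For the other two conditions I would exploit the $C^1$-conjugacy of $\rho$ into $\SO(2,\bR)$. Writing $\rho(g) = h \circ r(g) \circ h^{-1}$ for some $h \in \Diff_+^1(S^1)$ and a faithful representation $r \co \bZ^d \to \SO(2,\bR)$, the faithfulness of $r$ forces every nontrivial element to act as a nontrivial rotation, hence freely; every $r$-orbit is therefore free, and conjugation by the homeomorphism $h$ transfers this property to $\rho$, yielding freeness of $\rho(\bZ^d)\cdot 0$.

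For the derivative bound, the chain rule together with $r(g)' \equiv 1$ gives
\[
\rho(g)'(0) \;=\; \frac{h'\bigl(r(g)\cdot h^{-1}(0)\bigr)}{h'\bigl(h^{-1}(0)\bigr)},
\]
so $\log \rho(g)'(0)$ is a difference of two values of $\log h'$ on the compact circle $S^1$. Since $h'$ is continuous and strictly positive, $\log h'$ is bounded, and the required uniform bound follows. With all hypotheses of Theorem~\ref{t:c1al} verified, that theorem immediately supplies the desired sequence $\{\rho_k\}$ of nontrivial blow-ups converging to $\rho$ in the $C^1$-topology. I foresee no substantial obstacle; the entire deduction is bookkeeping apart from the one-line derivative identity above, which is the key observation that converts the $C^1$-conjugacy hypothesis into the uniform logarithmic bound demanded by Theorem~\ref{t:c1al}.
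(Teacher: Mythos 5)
Your proposal is correct and follows the same route as the paper, which deduces the corollary directly from Theorem~\ref{t:c1al} applied to $G=\bZ^d$; the paper treats the verification as immediate, and your bookkeeping (spherical growth of degree $d-1$, freeness of orbits from faithfulness into $\SO(2,\bR)$ since nontrivial rotations are fixed-point free, and the bound $\rho(g)'(0)=h'\bigl(r(g)h^{-1}(0)\bigr)/h'\bigl(h^{-1}(0)\bigr)$ giving $\sup_g\abs*{\log\rho(g)'(0)}<\infty$) is exactly what is needed.
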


Note that the second inequality above holds for the modulus 
\[\alpha(x) =\left(x \cdot \log 1/x\right)^{1/d}\cdot (\log 1/x)^\epsilon.\]
for $\epsilon>0$ as  considered by
Deroin--Kleptsyn--Navas~\cite{DKN2007}, and for the stronger modulus
\[\alpha(x) =\left(x\cdot  \log 1/x\cdot \log\log1/x\right)^{1/d}\cdot (\log\log 1/x)^\epsilon.\]

Theorem~\ref{t:int-alpha} is now an immediate consequence.
We remark that the method of this section, however, does not extend to the case when $\alpha(x) = x\log (1/x)$;
cf.~\cite[Exercise 4.1.26]{Navas2011} and~\cite[Chapter 12]{KH1995}.

\section{Bounding lengths from below}\label{s:bounding}
In this section, we establish Theorem~\ref{t:alpha-inv} and Theorem~\ref{t:int-alpha-conv}.
For this, we will let $\alpha$ be a given concave modulus, and let  \[\rho\co G\to\Diff_+^{1,\alpha}(S^1)\]
be an exceptional action. We write $K$ for the exceptional minimal set of this action, which is the unique, closed, minimal, invariant
subset of $S^1$ under the action of $G$. The complement of $K$ is a union of open intervals in $S^1$, which are permuted
component-wise by the action of $G$, and thus consist of wandering intervals for the action of $G$ on $S^1$. There is a semi-conjugacy
$H\colon S^1\to S^1$ which collapses each wandering interval to a point, and which is at most two--to--one on $K$. The wandering
intervals for the action can obviously be partitioned into at most countably many orbits under the action of $G$. Each endpoint of a wandering
interval lies in $K$, and the action of $K$ is minimal. It follows that if $I$ and $J$ are two wandering intervals, not necessarily in the same
orbit under $G$, then $G.(\partial I)$ accumulates on $\partial J$.

The reader will note that we lose no generality in what follows by assuming that there is only one $G$--orbit of wandering intervals for
the action of $G$, and we will thus adopt this assumption. In this case, the inverse of $H$ is a blow-up of exactly one orbit of a minimal
action, say $\mathcal{O}$.

Each point $y\in\mathcal{O}$ corresponds to a maximal wandering interval \[I_y=(a_y,b_y)\sse S^1.\] 
We will continue to denote the minimal set by \[K:=S^1\setminus \coprod_{y\in\mathcal{O}}I_y.\]

We let $\|y\|_0$ and $\sigma(n)$ denote the orbit distance and the  spherical growth functions as before, and put $\ell_y:=|I_y|$. Note that we have
\[\ell_{gy}=|I_{gy}|=|gI_y|,\]
where we mean $gy=\bar\rho(g)y$ and $gI_y=\rho(g)I_y$.

\subsection{Endpoint derivatives and the fundamental estimate}
The following lemma is well-known, and is crucial for the content of this section.

\begin{lem}[Fundamental Estimate]\label{lem:fundamental}
Let $\alpha,G,\rho,\mathcal{O},\{\ell_y\}$ be as above, and let $g\in G$. If
\[\lim_{y\in\mathcal{O}} |\ell_{gy}|/|\ell_y|=1,\]
then we have that \[\sup_{y\in\mathcal{O}} \frac{1}{\alpha(\ell_y)}\abs*{1-\frac{\ell_{gy}}{\ell_{y}}}<\infty.\]
\end{lem}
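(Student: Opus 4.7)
The plan is to show that the derivative $g'$ takes the constant value $1$ on the exceptional minimal set $K$, after which the estimate follows from a single application of $\alpha$--continuity and the mean value theorem. This reduces the global assertion about ratios of interval lengths to a local comparison of $g'$ with its value at an endpoint of $I_y$, which by construction lies in $K$.

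First, I would verify that $g'(x) = 1$ for every $x \in K$. Given such an $x$, the fact that $K$ is a Cantor set (hence has empty interior) produces a sequence of wandering intervals $I_{y_n}$ converging to $\{x\}$ in the Hausdorff sense; in particular, $\ell_{y_n} \to 0$. Because the intervals $\{I_y\}_{y \in \mathcal{O}}$ are pairwise disjoint with $\sum_y \ell_y \le 1$, the set $\{y \in \mathcal{O} : \ell_y \ge \varepsilon\}$ is finite for every $\varepsilon > 0$, so the indices $y_n$ exit every finite subset of $\mathcal{O}$. Consequently the hypothesis $\lim_{y \in \mathcal{O}} \ell_{gy}/\ell_y = 1$ applies along this sequence. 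Choosing $\xi_n \in I_{y_n}$ with $g'(\xi_n) = \ell_{gy_n}/\ell_{y_n}$ via the mean value theorem, the continuity of $g'$ then gives $g'(x) = \lim_n g'(\xi_n) = 1$.

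Next, fix $y \in \mathcal{O}$ and observe that $a_y \in K$, so the previous step yields $g'(a_y) = 1$. For any $t \in \bar I_y$, the $\alpha$--continuity of $g'$ together with monotonicity of $\alpha$ gives
\[|g'(t) - 1| = |g'(t) - g'(a_y)| \le [g']_\alpha\, \alpha(|t - a_y|) \le [g']_\alpha\, \alpha(\ell_y).\]
Integrating over $I_y$ and using $\ell_{gy} = \int_{I_y} g'(t)\, dt$ yields $|\ell_{gy} - \ell_y| \le [g']_\alpha\, \alpha(\ell_y)\, \ell_y$, so dividing by $\ell_y$ bounds the supremum in the statement by the finite quantity $[g']_\alpha$.

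The only subtle point is the first step: one must rule out the possibility that a given $x \in K$ is approached only by wandering intervals whose indices lie in some fixed finite subset of $\mathcal{O}$, which would prevent the hypothesis from being invoked in the limit. The summability $\sum_y \ell_y \le 1$ circumvents this obstacle, since it forces $\ell_{y_n} \to 0$ to imply that the indices $y_n$ escape every finite subset of $\mathcal{O}$. After this, the remaining steps are essentially a routine integration against an $\alpha$--continuity estimate.
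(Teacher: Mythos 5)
Your proof is correct and follows essentially the same route as the paper: first showing $g'\equiv 1$ on the minimal set via mean value points in wandering intervals accumulating at points of $K$, then converting $\alpha$--continuity of $g'$ at an endpoint into the stated bound. The only cosmetic difference is that you integrate $|g'(t)-1|$ over $I_y$ where the paper compares $g'$ at the endpoint with the mean value point $z_y$ directly; your extra care about the indices $y_n$ leaving finite subsets of $\mathcal{O}$ is a welcome precision on the same argument.
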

Recall the limit above is taken with respect to an arbitrary enumeration of $\{\ell_y\}$. It is instructive to imagine that $\{\ell_y\}$ is listed in the monotone decreasing order. The reader will note that the limit in the hypotheses of Lemma~\ref{lem:fundamental} continues to make sense
even if there are many orbits of wandering intervals for the action of $G$. Indeed, the limit compares lengths of wandering
 intervals in a single orbit only, so no comparisons between distinct orbits need to be made.

\begin{proof}[Proof of Lemma~\ref{lem:fundamental}]
We first claim that $g'(x)=1$ for all $x$ in the minimal set $K$. 
By The Mean Value Theorem, for each $y\in \mathcal{O}$ there exists  $z_y\in I_y$ satisfying $g'(z_y)=\ell_{gy}/\ell_y$.
In the case when $x\in  I_y$ for some $y\in \mathcal{O}$, 
then by minimality we obtain a sequence $\{y_i\}\in\mathcal{O}\setminus\{y\}$ such that $\lim_i  I_{y_i}\to x$. 
It follows that  
\[
g'(x) = \lim_i g'(z_{y_i}) = \lim_i \ell_{gy_i}/\ell_{y_i}=1
\]
Since $\{\partial I_y\}_{y\in\mathcal{O}}$ is dense in $K$, our claim is proved.

Let us now fix $y\in\mathcal{O}$, and $w\in\partial I_y$.
The conclusion  follows from  the inequality
\[\abs*{1-\ell_{gy}/\ell_y}
=
\abs*{s'(w)-s'(z_y)}
\leq \brac*{g'}_\alpha\cdot \alpha(\ell_y).\qedhere\]
\end{proof}

\subsection{Bounding the values of $\alpha(\ell_y)$}
It is evident that passing from the fundamental estimate (Lemma~\ref{lem:fundamental}) to \emph{a priori} bounds on the
values of $\{\ell_y\}_{y\in\mathcal{O}}$ is a natural approach to a converse to Theorem~\ref{t:c1al}, and this is precisely the sort of
control given to us in Theorem~\ref{t:alpha-inv} above. Recall the following result from the introduction.

\begin{thm}\label{t:al-inv}
Let $\alpha$ be a concave modulus, and let 
\[\rho\co \bZ^d\to \Diff_+^{1,\alpha}(S^1)\]
be an exceptional action. 
Suppose that $\rho$ is semi-conjugate to a faithful action 
\[\bar\rho\co\bZ^d\to\Diff_+^1(S^1)\]
by a semi-conjugacy map  $H$ such that $\bar\rho$ is $C^1$--conjugate into $\SO(2,\bR)$.
 Assume for each generator $s$ of $G$ and $x$ in the exceptional minimal set of $\rho$ we have that
\[ \rho(s)'(x) = \bar\rho(s)'\circ H(x).\]
Then we have that  \[\int_0^1 \alpha^{-1}(t)/t^{d+1}\;dt<\infty.\]
\end{thm}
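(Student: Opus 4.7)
The argument proceeds in four stages. First I would invoke the Gottschalk--Hedlund lemma (the mechanism behind Proposition~\ref{p:GoHe}) applied to the minimal action $\bar\rho$ to obtain a continuous $\phi\co S^1\to\bR$ satisfying $\log\bar\rho(s)' = \phi - \phi\circ\bar\rho(s)$ for each generator $s$. Combined with the derivative--matching hypothesis $\rho(s)'(x) = \bar\rho(s)'\circ H(x)$ for $x$ in the exceptional minimal set $K$ and with the cocycle property of log derivatives, setting $\Phi := \phi\circ H$ gives
\[\log\rho(g)'(x) = \Phi(x) - \Phi(\rho(g)x) \qquad (g\in\bZ^d,\ x\in K).\]
Since $\Phi$ is continuous on $S^1$, we deduce that $\log\rho(g)'|_K$ is uniformly bounded in $g$.

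Second, for each wandering interval $I_y=(a_y,b_y)$ and each generator $s$, the mean value theorem combined with the $\alpha$--continuity of $\rho(s)'$ yields the non--limiting version of the Fundamental Estimate (Lemma~\ref{lem:fundamental}):
\[\frac{\ell_{sy}}{\ell_y} = \rho(s)'(a_y) + O(\alpha(\ell_y)),\]
with the implicit constant depending only on $[\rho(s)']_\alpha$. Since $\rho(s)'(a_y) = \exp(\Phi(a_y)-\Phi(a_{sy}))$ is bounded away from zero uniformly in $y$, taking logarithms gives
\[\log\ell_{sy}-\log\ell_y = \Phi(a_y)-\Phi(a_{sy}) + O(\alpha(\ell_y)).\]
Defining $F(y) := \log\ell_y + \Phi(a_y)$ makes the boundary terms telescope into
\[F(\rho(s)y) - F(y) = O(\alpha(\ell_y)),\]
uniformly in generators $s$ and in $y\in\mathcal{O}$.

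Third, for each $y\in\mathcal{O}$ with $\|y\|_0 = n$, iterate along a geodesic path $0 = y_0\to y_1\to\cdots\to y_n = y$ in $\bZ^d$:
\[|\log\ell_y|\;\leq\;O(1)+C\sum_{i=0}^{n-1}\alpha(\ell_{y_i}).\]
The crux is to upgrade this to the pointwise lower bound $\alpha(\ell_y)\geq A/\|y\|_0$ for some $A>0$ and all $y$ with $\|y\|_0$ sufficiently large. I would proceed by contradiction: were there a sequence $y_k\in\mathcal{O}$ with $\|y_k\|_0\,\alpha(\ell_{y_k})\to 0$, then the iterative identity applied along the many reorderings of generators afforded by the commutativity of $\bZ^d$ would make $|\log \ell_{y_k}|$ too small to be consistent simultaneously with the total mass constraint $\sum_y\ell_y\leq 1$ and the polynomial orbit count $|\{y:\|y\|_0\leq n\}|\asymp n^{d}$.

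Finally, combining the pointwise bound with $|\{y:\|y\|_0 = n\}|\asymp n^{d-1}$ yields
\[1\;\geq\;\sum_{y\in\mathcal{O}}\ell_y\;\gtrsim\;\sum_{n\geq 1} n^{d-1}\alpha^{-1}(A/n),\]
and the substitution $t = A/n$ converts convergence of this series into $\int_0^1\alpha^{-1}(t)/t^{d+1}\,dt<\infty$. The main obstacle is the quantitative pointwise bound in the third stage: the iterative identity is merely local, and turning it into a sharp global estimate requires careful exploitation of the abelian structure of $\bZ^d$ to balance the sums along different paths. The single--generator case $d=1$ (Theorem~\ref{t:alpha-inv}) serves as a guide, showing that a monotonicity/contradiction argument plus concavity of $\alpha$ should be enough once the path combinatorics is handled correctly.
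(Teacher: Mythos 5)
Your stages 1, 2 and 4 line up with the paper's argument: your normalization $F(y)=\log\ell_y+\Phi(a_y)$ is exactly the paper's renormalized lengths $L_y=\ell_y/\bar\rho(g)'(0)$ in logarithmic form (and the appeal to Gottschalk--Hedlund is not even needed, since $\bar\rho$ is assumed $C^1$--conjugate into rotations, which makes $\log\bar\rho(g)'$ a continuous coboundary directly), the mean value theorem plus $\alpha$--continuity gives the same one--step estimate $\abs*{L_{sy}/L_y-1}\le C\alpha(L_y)$, and the final passage from $\alpha(\ell_y)\ge A/\|y\|_0$ through $\sigma(n)\sim n^{d-1}$ and $\sum_y\ell_y\le 1$ to $\int_0^1\alpha^{-1}(t)/t^{d+1}\,dt<\infty$ is the paper's conclusion verbatim.

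However, your stage 3 --- which you yourself flag as the crux --- is a genuine gap, not a detail. The contradiction argument you sketch does not work as described: assuming a sequence $y_k$ with $\|y_k\|_0\,\alpha(\ell_{y_k})\to 0$ tells you nothing about the values $\alpha(\ell_{y_i})$ at the intermediate points of a geodesic from $0$ to $y_k$, so the telescoped inequality $\abs{\log\ell_{y_k}}\le O(1)+C\sum_i\alpha(\ell_{y_i})$ gives no contradiction with $\sum_y\ell_y\le 1$ (the intermediate terms could each be of size $\asymp 1/i$, making $\ell_{y_k}$ polynomially small and perfectly summable), and it is not explained how ``reorderings of generators'' would repair this; in fact commutativity of $\bZ^d$ plays no role at this point. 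The correct mechanism, used in the paper, is a direct forward induction on $i=\|y\|_0$ from the one--step inequality rewritten multiplicatively as $L_{sy}\ge L_y\bigl(1-C\alpha(L_y)\bigr)=h(L_y)$: since $h(x)=x(1-C\alpha(x))$ is increasing near $0$, the inductive hypothesis $\alpha(L_y)\ge A/i$ gives $L_{sy}\ge h\bigl(\alpha^{-1}(A/i)\bigr)=\alpha^{-1}(A/i)\bigl(1-AC/i\bigr)$, and concavity in the form $\alpha(cx)\ge c\alpha(x)$ for $0<c<1$ yields $\alpha(L_{sy})\ge (1-AC/i)\,A/i\ge A/(i+1)$ once $i\ge CA/(1-CA)$; choosing $A$ small enough handles both this threshold and the finitely many $y$ with $\ell_y\ge\epsilon$. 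Without this (or an equivalent) argument, the pointwise bound $\alpha(\ell_y)\ge A/\|y\|_0$ --- and hence the theorem --- is not established by your proposal.
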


\bp
Set $G:=\bZ^d$.
Let $J$ be a maximal open wandering interval of $\rho$, and let $\mathcal{O}$ be the $\bar\rho(G)$--orbit of $0:=H(J)$.
We may write
\[
\{gJ\}_{g\in G}=\{I_y:=(a_y,b_y)\}_{y\in\mathcal{O}}\]
and $\ell_y:=I_y$. 

For each $y=\bar\rho(g).0\in\mathcal{O}$,  there exists $z_y\in I_y$ such that  \[\rho(s)'(z_y)=\ell_{sy}/\ell_y.\]
Let $s$ be a generator of $G$. Then we have that
\begin{align*}
\brac*{\rho(s)'}_\alpha \cdot\alpha(\ell_y)
&\ge \abs*{\rho(s)'(z_y)-\rho(s)'(a_y)}
\ge \abs*{\frac{\ell_{sy}}{\ell_y}-\bar\rho(s)'(y)}
\\
&\ge \inf_{x\in S^1} \bar\rho(s)'(x) \cdot \abs*{\frac{\ell_{sy}}{\bar\rho(s)'(y)\ell_y}-1}.\end{align*}
Setting $L_y :=\ell_y / \bar\rho(g)'(0)$, we can find a constant $C>0$ such that
\[
\abs*{\frac{L_{sy}}{L_y}-1} \le C\alpha(L_y).\]
Here, we also used the fact
\[
\alpha(\ell_y)\le \max(1,\sup_{h\in G} \bar\rho(h)'(0)) \alpha(L_y).\]

It is easy to show that $h(x)=x(1-C\alpha(x))$ is increasing for small values of $x$, and that in fact $h'(0)\geq 1$.
Recall we let $\|y\|_0$ denote the orbit distance between $0$ and $y$.

\begin{claim*}
There exists a constant $A>0$ such that $\alpha(\ell_y)\ge A/\|y\|_0$.
\end{claim*}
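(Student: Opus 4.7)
The plan is to iterate the fundamental estimate
\[
L_{sy} \;\ge\; L_y\bigl(1-C\alpha(L_y)\bigr) \;=\; h(L_y),
\]
which is immediate from $|L_{sy}/L_y-1|\le C\alpha(L_y)$, along a geodesic in the Cayley graph of $\bZ^d$ from $0$ to $y$. Writing $y = s_n\cdots s_1.0$ as a shortest product of generators with $n=\|y\|_0$, and setting $y_j := s_j\cdots s_1.0$, this gives $L_{y_{j+1}} \ge h(L_{y_j})$ for each $j$. Because $h$ is monotone increasing on some $[0,\delta]$ (indeed $h'(x) \ge 1 - 2C\alpha(x)$ by the concavity bound $x\alpha'(x)\le\alpha(x)$), a straightforward induction yields $L_y \ge h^n(L_0)$. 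Once $L_{y_j}\le\delta$ the iteration is automatic, and the finitely many $y$ with $\|y\|_0$ less than a cutoff $n_0$ may be absorbed by shrinking the final constant $A$.

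The heart of the matter is estimating the sequence $u_n := h^n(L_0)$, which is a discrete analog of the ODE $\dot u = -Cu\alpha(u)$. I would analyze $v_n := 1/\alpha(u_n)$ rather than $u_n$ directly. From the identity $u_n - u_{n+1} = Cu_n\alpha(u_n)$, the mean value theorem $\alpha(u_n)-\alpha(u_{n+1}) = \alpha'(\xi_n)(u_n-u_{n+1})$ with $\xi_n \in (u_{n+1},u_n)$, and the concavity estimate $\alpha'(\xi_n) \le \alpha(u_{n+1})/u_{n+1}$, one computes
\[
v_{n+1}-v_n \;=\; \frac{\alpha'(\xi_n)(u_n-u_{n+1})}{\alpha(u_n)\alpha(u_{n+1})} \;\le\; \frac{Cu_n}{u_{n+1}} \;\le\; 2C,
\]
where the last step uses $u_n/u_{n+1} = 1/(1-C\alpha(u_n)) \le 2$ for $n \ge n_0$. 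Telescoping gives $1/\alpha(u_n) \le v_{n_0}+2C(n-n_0)$, hence $\alpha(u_n) \ge A/n$ for some $A>0$ and all large $n$.

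Combined with $L_y \ge u_n$ and monotonicity of $\alpha$, this yields $\alpha(L_y) \ge A/\|y\|_0$. Finally, since $\bar\rho$ is $C^1$-conjugate into the rotation group, the multipliers $\{\bar\rho(g)'(0)\}_{g\in\bZ^d}$ are uniformly bounded above and below, so $\ell_y$ and $L_y$ differ by a bounded factor and the bound transfers to $\alpha(\ell_y)$ after adjusting the constant. The main obstacle is the analytic step: the concavity inequality $\alpha'(x)\le \alpha(x)/x$ is exactly what is needed to turn the crude mean value bound into the clean recursion $v_{n+1}-v_n = O(1)$; without this one is stuck with an implicit functional inequality that does not telescope. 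Everything else—choosing a geodesic, verifying monotonicity of $h$, comparing $\ell_y$ with $L_y$, and absorbing finitely many initial terms—is routine.
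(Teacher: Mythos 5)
Your proposal is correct, but the quantitative step is run differently from the paper. The paper proves the claim by a guess-and-verify induction on $i=\|y\|_0$: assuming $\alpha(\ell_y)\ge A/i$, it applies the one-step bound $\ell_{sy}\ge h(\ell_y)\ge h(\alpha^{-1}(A/i))$ and then uses only the concavity inequality $\alpha(cx)\ge c\alpha(x)$ for $0<c<1$ to get $\alpha(\ell_{sy})\ge (1-AC/i)A/i$, checking that this exceeds $A/(i+1)$ once $A$ is chosen small; no mean value theorem for $\alpha$ is invoked. You instead reduce to the autonomous recursion $u_{n+1}=h(u_n)$ along a geodesic and show $1/\alpha(u_{n+1})-1/\alpha(u_n)\le 2C$ via the mean value theorem and $x\alpha'(x)\le\alpha(x)$, the discrete analogue of $\tfrac{d}{dt}\bigl(1/\alpha(u)\bigr)\le C$ for $\dot u=-Cu\alpha(u)$. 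Your route is more transparent in that it derives the $1/n$ rate rather than verifying a guessed ansatz, but it costs differentiability of $\alpha$ (harmless by Medvedev's majorant, as in Remark~\ref{rem:alpha-diff}, up to a bounded change of constants) and some bookkeeping the paper's orbit-wise induction avoids: the chain $L_y\ge h^n(L_0)$ is only legitimate once all intermediate points lie where $h$ is monotone, so you should note that $\sum_y\ell_y\le 1$ forces all but finitely many orbit points to satisfy $L_z\le\delta$, that these exceptions have word length at most some $N$ and hence occur only in the first $N$ steps of any geodesic, and that the iteration should then be seeded by the minimum of $L_z$ over the finite sphere of radius $N+1$ rather than by $L_0$; this is the same kind of adjustment the paper makes with its ``after possibly requiring $\ell_y\le\epsilon$'' proviso, so it is a presentational point rather than a gap. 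Your final transfer from $L_y$ to $\ell_y$ via the uniform two-sided bounds on $\bar\rho(g)'(0)$ agrees with the paper's use of $\sup_{h\in G}\bar\rho(h)'(0)$.
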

The idea is similar to \cite[Lemma 3.1.3]{Navas2011}, where the case $\alpha(x)=x^{1/d}$ was considered.
We proceed by induction on $i:=\|y\|_0$. 
We suppose that $\alpha(\ell_y)\geq A/\|y\|_0$. After possibly requiring $\ell_y\le\epsilon$ for some small $\epsilon>0$, we obtain
\[\ell_{sy}\geq h(\ell_y)\geq h(\alpha^{-1}(A/\|y\|_0))=\alpha^{-1}\left(\frac{A}{\|y\|_0}\right)\left(1-\frac{AC}{\|y\|_0}\right),\] where the second inequality
holds because $\alpha$ is a homeomorphism and because $h$ is increasing near zero. We consider $s,y$ such that $\|sy\|_0=\|y\|_0+1=i+1$. We apply $\alpha$ to this string of inequalities now. 
Since $\alpha$ is concave, we have that for
any $0<c<1$, there is an inequality  $\alpha(cx)\geq c\alpha(x)$. Thus, after applying $\alpha$ to the right--most hand side, we get a quantity
bounded below by \[\left(1-\frac{AC}{i}\right)\frac{A}{i}.\] If one can show that this quantity is bounded below by $A/(i+1)$, then one obtains
$\alpha(\ell_{sy})\geq A/\|sy\|_0$, the desired conclusion. A straightforward manipulation shows that we obtain 
\[\left(1-\frac{AC}{i}\right)\frac{A}{i}\geq \frac{A}{i+1}\] provided that \[i\geq \frac{CA}{1-CA}.\] Thus, letting $A$ be a positive constant so
that $\alpha(\ell_y)\geq A/\|y\|_0$ even for $\ell_y\ge\epsilon$ and so that \[1\geq \frac{CA}{1-CA},\] we obtain the desired claim.

From the claim and from $\sigma(n)\sim n^{d-1}$, we have that
\[
1\ge  \sum_{y\in\mathcal{O}} \ell_y
\gtrsim \sum_{n\ge1} n^{d-1} \alpha^{-1}(A/n).\]
We have that 
\[
\infty > \sum_{n\ge1} (n+1)^{d-1} \alpha^{-1}(A/n)
\ge \int_1^\infty x^{d-1} \alpha^{-1}(A/x)dx.\]
This implies that $\int_0^1 \alpha^{-1}(t)/t^{d+1} \; dt<\infty$.
\end{proof}

Note that the claim in the proof of Theorem~\ref{t:al-inv} immediately implies Theorem~\ref{t:alpha-inv} from the introduction.
In Theorem~\ref{t:al-inv}, if $\bar\rho$ is actually a representation into the rotation group, then it suffices to assume 
\[\lim_{y\in\mathcal{O}}\ell_{sy}/\ell_y=1,\]
where $\ell_y$ is as given in the proof. 
From the Fundamental Estimate (Lemma~\ref{lem:fundamental}),
we conclude that $\rho(g)'(x)=1$ for each $x$ in the exceptional minimal set of $\rho$, and hence the same integrability constraints on 
$\alpha$ would follow.
We now have  Theorem~\ref{t:alpha-inv}
and Corollary~\ref{cor:int-alpha-inv} as  immediate consequences.

\begin{que}
In Theorem~\ref{t:al-inv}, does the same conclusion hold without the assumption on the values of $\rho(s)'$ at the minimal set?\end{que}

\begin{exmp}
Let $f$ be an exceptional diffeomorphism such that $\ell_{i+1}/\ell_i\to 1$ for intervals in a maximal wandering set. In this case, let us deduce from Theorem~\ref{t:alpha-inv} that $f'$ is not
$\alpha$--continuous for $\alpha(x)=x\log{1/x}$ (cf.~\cite{Navas2011,KH1995}).


Suppose $f'$ is $\alpha$--continuous. By Theorem~\ref{t:alpha-inv}, there exists a positive constant $A$ such that $\alpha(\ell_i)\ge A/i$ for all $i$. 
On the other hand, we have a positive sequence $\{x_i\}_{i\geq 2}$ defined below such that $\sum_i x_i=\infty$ and such that $\alpha(x_i)\leq A/i$. This is a contradiction, since we have
\[\ell_i \ge \alpha^{-1}(A/i) \ge x_i.\]

The sequence $\{x_i\}$ is actually defined as \[x_i=\frac{A}{K\cdot i\log i},\] where $K$ is a positive constant which will be fixed later.
Observe that $\sum_i x_i=\infty$, so it suffices to show that $\alpha(x_i)\leq A/i$ for all $i\geq 2$. Applying $\alpha$, we see
\[\alpha(x_i)=\frac{A}{K\cdot i\log i}\left( \log K+\log i+\log\log i+\log{A^{-1}}\right).\] This simplifies to
\[\frac{A(\log K+\log{A^{-1}})}{K}\cdot\frac{1}{i\log i}+\frac{A}{K\cdot i}+\frac{A\cdot \log\log i}{K\cdot i\log i}.\] Since $1/(i\log i)<1/i$ for $i\geq 3$,
since $(\log\log i)/\log i<1$ for $i\geq 2$,
and since $A$ is a fixed positive constant, it is clear that we can choose a sufficiently large $K$ to make this entire expression less than $A/i$
for $i\geq 2$, which is what we set out to show.

It is not difficult to generalize this last computation to show the same conclusion for the moduli \[\alpha(x)=x(\log{1/x})(\log\log{1/x}),\]
\[\alpha(x)=x(\log{1/x})(\log\log{1/x})(\log\log\log{1/x}),\] and indeed for
\[\alpha(x)=x(\log{1/x})(\log\log{1/x})(\log\log\log{1/x})\cdots(\log^n{1/x}).\]
\end{exmp}

\subsection{A converse to Theorem~\ref{t:alpha-inv}}
We now investigate the degree to which \[\int_0^1 \alpha^{-1}(t)/t^{d+1} dt<\infty\]
is a sufficient condition for the existence of an exceptional $C^{1,\alpha}$ 
action of $\bZ^d$.
We recall the following result, which was stated in the introduction.

\begin{thm}\label{t:int-comparison}
Let $d$ be a positive integer, and let $\alpha$ be a concave modulus such that  
\[\int_0^1\frac{\alpha^{-1}(t)}{t^{d+1}}\, dt<\infty,\] and such that \[M:=\sup_{0<t<1}\frac{\alpha(t)}{t\alpha'(t)}<\infty.\]
If $\rho\co \bZ^d\to\Diff_+^{1,\alpha}(S^1)$ is a faithful representation  that is $C^1$--conjugate into the rotation group $\SO(2,\bR)$, then there exists a sequence 
\[\rho_k\co \bZ^d\to\Diff_+^{1,\alpha}(S^1)\]
of nontrivial blow-ups of $\rho$ 
that converges to $\rho$ in the $C^1$--topology.\end{thm}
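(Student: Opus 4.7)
The plan is to invoke the blow-up framework of Theorem~\ref{t:c1}, choosing the length sequence tailored to the present hypotheses in essentially the same manner as in the proof of Theorem~\ref{t:c1al}, but with a different auxiliary function $\nu$. Concretely, for a sufficiently small constant $c > 0$, I will set
\[
\nu(x) := 1/\alpha^{-1}(c/x), \qquad \ell_y^k := \rho(g)'(0)/\nu(\|y\|_0 + k)
\]
for each $y = \rho(g).0 \in \mathcal{O} := \rho(\bZ^d).0$.

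First, I verify the standing hypotheses of Corollary~\ref{c:stab0}. Writing $\rho = h\bar\rho h^{-1}$ with $\bar\rho \leq \SO(2,\bR)$ and $h \in \Diff_+^1(S^1)$, the chain-rule identity
\[
\log\rho(g)'(0) = \log h'(\bar\rho(g)h^{-1}(0)) - \log h'(h^{-1}(0))
\]
together with continuity of $\log h'$ on the compact circle yields $\sup_g|\log\rho(g)'(0)| < \infty$. Since a faithful action by rotations has no nontrivial stabilizers, the orbit $\mathcal{O}$ is free. By the polynomial spherical growth bound $\sigma_{\bZ^d}(n) \leq C n^{d-1}$ and the change of variable $u = c/x$, the integrability hypothesis $\int_0^1 \alpha^{-1}(t)/t^{d+1}\,dt < \infty$ translates into the convergence $\sum_y \ell_y^k \to 0$ as $k \to \infty$. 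Continuity of $\alpha^{-1}$ at $0$ also gives $\nu(n+1)/\nu(n) \to 1$, so Theorem~\ref{t:c1} produces blow-ups $\rho_k \to \rho$ in the $C^1$-topology.

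The main obstacle is promoting these blow-ups from $C^1$ to $C^{1,\alpha}$; this is where the supremum hypothesis enters. Following the proof of Theorem~\ref{t:c1al}, the uniform boundedness of $[\rho_k(s)']_\alpha$ reduces to the uniform boundedness of
\[
A(s,k,y) := \alpha(\ell_y^k)^{-1}\bigl|1 - \nu(\|y\|_0+k)/\nu(\|sy\|_0+k)\bigr|.
\]
For $j := \|y\|_0 + k$ and $t := 1/\nu(j) = \alpha^{-1}(c/j)$, so that $\alpha(t) = c/j$, a direct differentiation gives $\nu'(j)/\nu(j) = c/(j^2 t\alpha'(t))$. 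The supremum hypothesis $\alpha(t)/(t\alpha'(t)) \leq M$ yields $t\alpha'(t) \geq c/(jM)$, whence $\nu'(j)/\nu(j) \leq M/j = (M/c)\,\alpha(1/\nu(j))$. Via the mean value theorem this bounds $A(s,k,y)$ uniformly in $s,k,y$, which is precisely the estimate needed for the remainder of the argument in the proof of Theorem~\ref{t:c1al} to run verbatim. The critical point is that the specific form of $\nu$ is tuned so that the supremum hypothesis translates directly into the needed pointwise regularity estimate.
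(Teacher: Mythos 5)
Your proposal follows essentially the same route as the paper: the same choice of lengths $\ell^k_y=g'(0)\,\alpha^{-1}\bigl(c/(\|y\|_0+k)\bigr)$ (the paper takes $c=1$), the same reduction of the $C^{1,\alpha}$ bound to the uniform boundedness of $A(s,k,y)$, and the same use of the hypothesis $M<\infty$ through a mean value estimate; your computation $\nu'(j)/\nu(j)=c/(j^2t\alpha'(t))\le M/j$ is just the paper's estimate on $(\alpha^{-1})'$ written for $\nu=1/\alpha^{-1}(c/\cdot)$. One point needs repair as written: the assertion that ``continuity of $\alpha^{-1}$ at $0$ gives $\nu(n+1)/\nu(n)\to1$'' is false for general concave moduli --- for $\alpha(x)=1/\log(1/x)$ (the paper's own example of why the supremum hypothesis is nontrivial) one has $\alpha^{-1}(1/n)/\alpha^{-1}(1/(n+1))=e$ --- and this ratio limit is exactly the nontrivial Claim in the paper, proved there from $M<\infty$. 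The fact you need is nonetheless available from your own differential inequality: integrating $\nu'/\nu\le M/x$ over $[n,n+1]$ gives $\nu(n+1)/\nu(n)\le\bigl((n+1)/n\bigr)^M\to1$, which is what both the hypotheses of Theorem~\ref{t:c1} and the case $\|sy\|_0=\|y\|_0-1$ of your $A(s,k,y)$ bound (where one needs $\nu(j)/\nu(j-1)$ bounded) actually require; with that justification substituted, the argument is complete and coincides with the paper's.
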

\bp
We follow our previous arguments for Theorems~\ref{t:c1} and~\ref{t:c1al} closely. We let $\mathcal{O}=\rho(G).0$ be a choice of a free orbit. 
Using the  spherical growth $\sigma(x)\sim x^{d-1}$ and the orbit distance function $\|y\|_0$, 
we set the parameters \[\ell_{g(0)}^k:=g'(0)\alpha^{-1}(1/ (\|y\|_0 + k)).\]
Since $\alpha^{-1}(t)/t^{d+1}$ is integrable near $0$, we have that
 \[\sum_{y\in\mathcal{O}}\ell_y^1 \lesssim \sum_n \sigma(n)\alpha^{-1}(1/n)\lesssim \int_1^\infty x^{d-1}\alpha^{-1}(1/x)dx=\int_0^1 \alpha^{-1}(t) / t^{d+1} \; dt <\infty.\]

\begin{claim*}
We have \[\lim_{x\to\infty} \frac{\alpha^{-1}(1/x)}{\alpha^{-1}(1/(x+1))}=1.\]
\end{claim*}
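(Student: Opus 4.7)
The plan is to apply the mean value theorem to $\alpha$ on a short interval and then exploit the hypothesis $\alpha(t)/(t\alpha'(t))\le M$ to turn the resulting additive estimate into a multiplicative one.

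Set $u:=\alpha^{-1}(1/x)$ and $v:=\alpha^{-1}(1/(x+1))$. Since $\alpha$ is an increasing homeomorphism, $u>v>0$ and both tend to $0$ as $x\to\infty$, so the claim reduces to showing $u/v\to 1$; equivalently, $(u-v)/u\to 0$. First I would write
\[
\alpha(u)-\alpha(v)=\frac1x-\frac1{x+1}=\frac1{x(x+1)},
\]
so by the mean value theorem there exists $\xi\in(v,u)$ with $\alpha'(\xi)(u-v)=1/(x(x+1))$.

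Next, since $\alpha$ is concave (and, by Remark~\ref{rem:alpha-diff}, may be taken differentiable), $\alpha'$ is nonincreasing, hence $\alpha'(\xi)\ge\alpha'(u)$. This gives
\[
u-v\le \frac{1}{x(x+1)\,\alpha'(u)}.
\]
Now I invoke the hypothesis $\alpha(t)\le M\,t\,\alpha'(t)$ at $t=u$: since $\alpha(u)=1/x$, this yields $\alpha'(u)\ge 1/(Mxu)$. Plugging this into the previous bound,
\[
u-v\le \frac{Mxu}{x(x+1)}=\frac{Mu}{x+1},
\]
so $1-v/u\le M/(x+1)$, and hence $v/u\to 1$, equivalently $u/v\to 1$, as $x\to\infty$.

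The argument is routine and I do not anticipate any obstacle; the only mild subtlety is ensuring the hypothesis on $\alpha(t)/(t\alpha'(t))$ applies at $t=u$, but this is automatic for all sufficiently large $x$ because $u\to 0$ and the supremum is taken over $0<t<1$ in the statement (or over all $t>0$ in the introductory version, in which case nothing needs to be checked).
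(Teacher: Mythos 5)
Your proof is correct and follows essentially the same route as the paper: a mean value theorem estimate for the increment from $1/(x+1)$ to $1/x$, converted into a multiplicative bound via the hypothesis $\alpha(t)\le M\,t\,\alpha'(t)$. The only cosmetic difference is that you apply the MVT to $\alpha$ on $[\alpha^{-1}(1/(x+1)),\alpha^{-1}(1/x)]$ and use monotonicity of $\alpha'$, whereas the paper differentiates $\alpha^{-1}$ and invokes the hypothesis at the intermediate point; both yield the same bound $1-M/(x+1)\le \alpha^{-1}(1/(x+1))/\alpha^{-1}(1/x)\le 1$.
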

To see the claim, we put
\[ c(x):= \alpha^{-1}(1/x) / \alpha^{-1}(1/(x+1))\ge1\] for $x\gg0$. Then we have some $t=t(x)\in(x,x+1)$ such that 
\begin{align*}
& 0< c(x) - 1 =  \frac{(\alpha^{-1})'(1/t)}{x(x+1)\alpha^{-1}(1/(x+1))}
=\frac{1}{x(x+1) \alpha^{-1}(1/(x+1)) \cdot \alpha'\circ\alpha^{-1}(1/t)}
\\
&\le
\frac{M\alpha^{-1}(1/x)}{x\alpha^{-1}(1/(x+1))}=M c(x) / x.\end{align*}
The claim follows from the estimate $c(x) <(1- M/x)^{-1}$   for $x\gg0$.

Put $j:=\|y\|_0+k$ for $y\in\mathcal{O}$ and $k>0$.
As in the proof of Theorem~\ref{t:c1al}, 
 \[ A(s,k,y):=\frac1{\alpha(\ell_y^k)}\abs*{1-\frac{\ell^k_{sy}}{s'(y)\ell_y^k}}
 \le j \abs*{1- \frac{\alpha^{-1}(1/(j\pm1))}{\alpha^{-1}(1/j)}}
\lesssim\frac{1}{j\alpha'\circ\alpha^{-1}(1/t)\cdot \alpha^{-1}(1/j)}
\]
for some $t\in(j-1,j+1)$.
Using the hypothesis on $\alpha(x)/(x\alpha'(x))$, we estimate
\[
A(s,k,y)  \lesssim
\frac{\alpha^{-1}(1/t)}{\alpha^{-1}(1/j)},\]
up to scaling and bounded error.
By the above claim, $A(s,k,y)$ is uniformly bounded as desired.
\ep

Theorem~\ref{t:int-alpha-conv} follows immediately from Theorem~\ref{t:int-comparison}.


\begin{rem}
In the case when $d=1$, one could attempt to prove a converse to Theorem~\ref{t:alpha-inv} by starting with the sequence $\{A/i\}_{i>0}$ and attempting to find
a convergent sequence of interval lengths $\{\ell_i\}_{i\in\bZ}$ such that \[\sup_i\frac{1}{\alpha(\ell_i)}\left(1-\frac{\ell_{i+1}}{\ell_i}\right)=K\]
is finite, whereupon one can apply the method of Theorem~\ref{t:c1al} in order to find the desired exceptional diffeomorphism. A natural
choice of lengths would be \[\ell_i=\alpha^{-1}\left(\frac{A}{|i|+i_0}\right),\] where here $i_0\in\N$ is a suitable shift of indices.

With this choice, the finiteness of the supremum forces the following inequality to hold:
\[\alpha^{-1}\left(\frac{A}{|i|+i_0+1}\right)\geq \alpha^{-1}\left(\frac{A}{|i|+i_0}\right)\left(1-\frac{KA}{|i|+i_0}\right),\] which is equivalent to
\[\frac{A}{|i|+i_0+1}\geq \alpha\left(\alpha^{-1}\left(\frac{A}{|i|+i_0}\right)\left(1-\frac{KA}{|i|+i_0}\right)\right).\] One can then try and pull out
the constant $1-(KA)/(|i|+i_0)$ on the right hand side, but because this constant is less than one, the result will be less than or equal to
the right hand side. In order to get around this difficulty, one can attempt a linear approximation of $\alpha$ at $\alpha^{-1}(A/(|i|+i_0))$
in order to show that \[\frac{A}{|i|+i_0+1}\geq \left(1-\frac{KA}{|i|+i_0}\right)\left(\frac{A}{|i|+i_0}\right),\] possibly up to a
nonzero multiplicative constant.
After some straightforward manipulations, one quickly finds that the hypothesis \[\sup_{x> 0}\frac{\alpha(x)}{x\alpha'(x)}<\infty\] is needed to
make the linear estimate work, and this is precisely one of the hypothesis of Theorem~\ref{t:int-comparison}. This
last supremum is easily seen
not to be finite for arbitrary concave moduli and therefore its finiteness imposes a nontrivial hypothesis on $\alpha$.
As an example, one can consider
$\alpha(x)=1/\log{(1/x)}$.
\end{rem}

\subsection{Remarks on McDuff's Question}
Let $f$ be an exceptional $C^1$--diffeomorphism, and let $\ell_i=|f^i(J)|$ for a wandering interval $J\sse S^1$. 
It is evident from the discussion in this section that the assumption $\ell_{i+1}/\ell_i\to 1$ for the lengths of the successive wandering intervals
implies the strong conclusion that $f'=1$ on the endpoints of the wandering intervals. This is a somewhat restrictive phenomenon, and the
methods of this paper are not suited to address the possibility that $f'$ is not identically $1$ on the endpoints. For instance, the fundamental
estimate, the $1$ appearing in the expression \[\left(1-\frac{\ell_{i+1}}{\ell_i}\right)\] must be replaced by the corresponding endpoint derivative.
This is destructive in certain inductive procedures involved in the proof of Theorem~\ref{t:alpha-inv}, since certain products which one
needs to be bounded away from zero become products of derivatives of $f$ at endpoints of a maximal wandering set for $f$.

There is a long-standing open question about exceptional diffeomorphisms of the circle due to D. McDuff~\cite{McDuff1981,athanassopoulos}.
In her work, she considers the
\emph{ratio spectrum} for a maximal wandering set of an exceptional diffeomorphism. Namely, she rearranges the lengths of the intervals
to form a decreasing sequence $\{\lambda_i\}_{i\in\N}$, and considers the possible accumulation points of the set $\lambda_i/\lambda_{i+1}$.
She proves that this set is bounded and that $1$ is an accumulation point, and asks if $1$ is in fact the only accumulation point. One
can naturally strengthen her question as follows:

\begin{que}\label{q:mcduff-strong}
Let $U\subset S^1$ be an open interval meeting the exceptional minimal set of a finitely generated group, and let $\{\lambda_i\}_{i\in \N}$ be the lengths of intervals in a
maximal wandering set lying in $U$, arranged in decreasing order. Does it follow that $\lambda_i/\lambda_{i+1}\to 1$?
\end{que}

Even assuming a positive answer to this strengthened version of McDuff's question, and even after possibly assuming strong Diophantine
properties for the rotation number $\theta$,
it seems impossible to remove the assumptions
$\ell_{i+1}/\ell_i\to 1$ in our results. The reason for this is the mismatch which occurs upon rearranging the lengths of intervals. One may control
the first return time of a wandering interval to $U$ via Diophantine properties, but the index rearrangement map
\[i\mapsto\ell_i\mapsto \lambda_j\mapsto j\] may be so badly behaved that one may not be able to relate $\ell_{i+1}/\ell_i$ to
$\lambda_{j+1}/\lambda_j$ in a way which would be useful for our purposes. Lastly, we note the claim in the proof of Theorem~\ref{t:int-comparison} implies an affirmative answer to McDuff's question for the construction there with parameters given by $\ell_y := \alpha^{-1} (c / (\|y\|_0+k))$.

\appendix

\section{A proof of  Theorem~\ref{t:int-alpha}}\label{s:appendix}
In this section we give a proof of Theorem~\ref{t:int-alpha}, which the reader may  use as a guide to understanding the proof of
Theorem~\ref{t:c1al}.

\subsection{Herman's construction of exceptional diffeomorphisms}
In this subsection, our discussion is
more explicitly modeled on the construction of exceptional diffeomorphisms due to Herman~\cite{Herman1979}.
We will suppress the differential when writing integrals when there is no danger of confusion.

\begin{prop}\label{p:g-denjoy}
Let $\alpha$ be a concave modulus, and let $\{(x_i,y_i)\}_{i\in\bZ}$ be a disjoint collection of intervals in $S^1$.
Suppose that there exists a positive, $\alpha$--continuous map $g$ on $S^1$ satisfying the following for all $i\in\bZ$:
\begin{itemize}
\item $\displaystyle \int_{S^1} g=1$;
\item $\displaystyle \int_{x_i}^{y_i} g = |y_{i+1}-x_{i+1}|$;
\item $\displaystyle\int_{x_{i-1}}^{x_i} g =|x_{i+1}-x_{i}|$.
\end{itemize}
Then the map 
\[
f(x):=x_1+\int_{x_0}^x g\]
is a $C^{1,\alpha}$ diffeomorphism of $S^1$ such that $f((x_i,y_i))=(x_{i+1},y_{i+1})$ for all $i$.
\end{prop}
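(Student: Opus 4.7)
The proof splits naturally into three routine verifications once one unwinds the definitions: that $f$ is a well-defined $C^{1,\alpha}$ circle diffeomorphism, that $f(x_i) = x_{i+1}$ for all $i\in\bZ$, and that $f(y_i) = y_{i+1}$ for all $i\in\bZ$. The two endpoint equalities together give $f((x_i,y_i))=(x_{i+1},y_{i+1})$, since $f' = g > 0$ means $f$ is an orientation-preserving homeomorphism on each interval.

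First I would verify the regularity and circle-closure. Since $g$ is positive and $\alpha$-continuous (in particular continuous), the function $f(x) = x_1 + \int_{x_0}^x g$ is $C^1$ on $\bR$ with $f'(x) = g(x) > 0$, so $f$ is a local diffeomorphism onto its image. The normalization $\int_{S^1} g = 1$ translates to $f(x+1) = f(x) + 1$ for all lifts, so $f$ descends to an orientation-preserving diffeomorphism of $S^1 = \bR/\bZ$. The hypothesis that $g$ is $\alpha$-continuous is exactly the statement $f \in \Diff_+^{1,\alpha}(S^1)$.

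Next I would verify $f(x_i) = x_{i+1}$ by induction on $|i|$, working under the implicit assumption that the intervals $(x_i,y_i)$ are enumerated in cyclic order along $S^1$ so that the absolute values in the hypothesis are the corresponding signed increments. The base case $f(x_0) = x_1$ is the definition of $f$. For the inductive step, one uses the third hypothesis in the form
\[
\int_{x_i}^{x_{i+1}} g \;=\; |x_{i+2} - x_{i+1}| \;=\; x_{i+2} - x_{i+1},
\]
so that if $f(x_i) = x_{i+1}$ then
\[
f(x_{i+1}) \;=\; f(x_i) + \int_{x_i}^{x_{i+1}} g \;=\; x_{i+1} + (x_{i+2}-x_{i+1}) \;=\; x_{i+2},
\]
completing the induction (the argument for negative $i$ is identical). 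Granted this, the second hypothesis then yields the right-endpoint equality
\[
f(y_i) \;=\; f(x_i) + \int_{x_i}^{y_i} g \;=\; x_{i+1} + (y_{i+1} - x_{i+1}) \;=\; y_{i+1}.
\]

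I do not expect any substantive obstacle: the content of the statement is really that the three integral identities are precisely the compatibility conditions making $f$ realize the prescribed combinatorial behavior on the intervals $(x_i, y_i)$. The only point requiring a touch of care is the bookkeeping of the cyclic ordering of the $x_i$, which justifies dropping the absolute values when telescoping; this is implicit in the statement, since $g > 0$ forces $\int_{x_{i-1}}^{x_i} g > 0$ and therefore requires the indexing to be consistent with the circular order on $S^1$.
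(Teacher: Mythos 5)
Your overall route is the same as the paper's: the paper verifies the base-point-change identity $x_i+\int_{x_{i-1}}^x g = x_{i+1}+\int_{x_i}^x g$ (by splitting cases on where $x$ sits), which is exactly your telescoping of the third hypothesis, and then reads off the endpoint images; your induction on $f(x_i)=x_{i+1}$ followed by $f(y_i)=y_{i+1}$ is the same computation packaged slightly differently, and your treatment of regularity and degree one via $\int_{S^1}g=1$ is fine.

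The one genuine problem is your ``implicit assumption that the intervals $(x_i,y_i)$ are enumerated in cyclic order along $S^1$.'' This is not implicit, and it is false precisely in the situation the proposition is built for: in Example~\ref{ex:irr} the intervals are blow-ups of the orbit $\{\form{i\theta}\}$ of an irrational rotation, so consecutive indices $i,i+1$ are essentially never adjacent in the circular order. Your justification for the assumption is also a non sequitur: on the circle, $\int_{x_{i-1}}^{x_i}g$ (over the positively oriented arc) is positive for \emph{any} pair of distinct points, so positivity of $g$ places no constraint whatsoever on how the index order relates to the circular order. Fortunately your argument does not actually need the assumption. The correct reading is that all integrals $\int_a^b$ are over the positively oriented arc from $a$ to $b$ and that $|x_{i+2}-x_{i+1}|$ denotes the length of the positively oriented arc from $x_{i+1}$ to $x_{i+2}$, i.e.\ the difference modulo $1$; with that convention one has $x_{i+1}+|x_{i+2}-x_{i+1}|=x_{i+2}\pmod 1$ and $f(b)=f(a)+\int_a^b g\pmod 1$ for any $a,b$ (the wrap-around being absorbed by $\int_{S^1}g=1$), so your induction and the right-endpoint computation go through verbatim, with no hypothesis on the ordering of the indices. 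You should delete the cyclic-order assumption and its purported justification and instead state this mod-$1$ convention explicitly; as written, your proof literally covers only a special configuration that excludes the Denjoy-type applications.
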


The reader  may  compare this  proposition with Lemma~\ref{l:Yoccoz}  above.
\bp[Proof of Proposition~\ref{p:g-denjoy}]
It is obvious that $f$ is  bijective and $C^{1,\alpha}$.
After dividing the cases that 
\[
x_{i-1}\le x\le x_i\]
 and that \[x_{i-1}\le x_i\le x,\]  one can easily verify the equality
\[
x_i+\int_{x_{i-1}}^x g =x_{i+1}+\int_{x_{i}}^x g.\]
The proof is then immediate.
\ep

It is not very difficult to construct a $g$ satisfying the hypotheses of
Proposition~\ref{p:g-denjoy} with the extra condition $g(x) = 1$ on $S^1\setminus \coprod_i [x_i,y_i]$.
The idea is that we set $g$ to be constantly 1 except for some suitable bumps (up or down) inside each interval $[x_i,y_i]$.
We remark that the method of this subsection, however, does not extend to the case when $\alpha(x) = x\log (1/x)$;
cf.~\cite[Exercise 4.1.26]{Navas2011} and~\cite[Chapter 12]{KH1995}.

For disjoint compact intervals $A=[a,a'],B=[b,b'],C=[c,c']$ of $S^1$, 
we write $A<B<C$ if we have the relation
\[a\le a'<b\le b'<c\le c'<a,\]
where here the relation $<$ is interpreted in the circular order on $S^1$.
\bd
Let $\{J_i\}_{i\in\bZ}$ be a sequence of compact intervals in $S^1$.
Suppose we have the relation
\[J_i < J_j < J_k\]
if and only if we have
\[J_{i+1} <J_{j+1}<J_{k+1}.\]
Then we say that the sequence $\{J_i\}$ is \emph{circular order preserving}.
\ed
We use the notation
\[
\form{x}=x-\floor{x}\in[0,1)\]
for $x\in\bR$. 
\begin{exmp}\label{ex:irr}
Let $\theta$ be a given irrational number, and let $\{\ell_i\}_{i\in\bZ}$ be a positive sequence such that 
\[L:=\sum_i \ell_i \in(0,1].\]
Using the Dirac measure $\delta_p$ for $p\in S^1$, we define a measure $\mu$ on $S^1$ as
\[
\mu := (1-L)\lambda + \sum_i \ell_i \delta_{ \form{i\theta}}.\]
We let $x_i :=\mu[0,\form{i\theta})$ for $i\in \bZ$.
Then
the set
\[\{ J_i:=[x_i,x_i+\ell_i]\}_{i\in\bZ}\]
 is a disjoint, circular order preserving collection of compact intervals in $S^1$. We call $\{J_i\}$ a \emph{blow-up} of the sequence $\{\form{i\theta}\}\sse S^1$.
 \end{exmp}

\begin{prop}\label{p:g-denjoy-prime-1}
Let $\alpha$ be a concave modulus.
Suppose we have a disjoint, circular order preserving sequence $\{J_i=[x_i,y_i]\}_{i\in\bZ}$ of compact intervals in $S^1$ such that 
\[
\lambda\left( [x_{i-1},x_{i}]\setminus \coprod_{k\in\bZ}J_k\right)
=\lambda\left( [x_i,x_{i+1}]\setminus \coprod_{k\in\bZ}J_k\right)\]
for all $i$. 
For $\ell_i:=|J_i|$, we also assume that 
\[
\sup_i \frac1{\alpha(\ell_i)}\left(1-\frac{\ell_{i+1}}{\ell_i}\right)<\infty.\]
Then there exists a $C^{1,\alpha}$ diffeomorphism $f$ of $S^1$ satisfying $f(J_i)=J_{i+1}$ for all $i$.
\end{prop}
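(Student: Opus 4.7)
The plan is to apply Proposition~\ref{p:g-denjoy} to a function $g$ built from the Yoccoz--type profile $\xi(R)$ of Lemma~\ref{l:Yoccoz}. Define $g\equiv 1$ on $S^1\setminus\coprod_i J_i$, and on each $J_i=[x_i,y_i]$ set
\[
g(t):=1-(1-R_i)\,\xi(R_i)\!\left(\frac{t-x_i}{\ell_i}\right),\qquad R_i:=\ell_{i+1}/\ell_i.
\]
Since $\xi(R)$ vanishes at both $0$ and $1$, the formula glues to a continuous function on $S^1$. Positivity follows from the estimate $\xi(R)\le 1+R$, which yields $g\ge \min(R_i^2,1)>0$; the hypothesis forces $R_i\to 1$ whenever $\ell_i\to 0$. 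The normalization $\int_0^1\xi(R)=1$ from Lemma~\ref{l:Yoccoz} gives $\int_{J_i}g = R_i\ell_i=\ell_{i+1}$, so $\int_{S^1}g=(1-\sum_i\ell_i)+\sum_i\ell_{i+1}=1$.

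Next I would verify the remaining integral conditions of Proposition~\ref{p:g-denjoy}. The condition $\int_{x_i}^{y_i}g=|y_{i+1}-x_{i+1}|$ is simply $\int_{J_i}g=\ell_{i+1}$. For $\int_{x_{i-1}}^{x_i}g=|x_{i+1}-x_i|$, decompose the arc from $x_{i-1}$ to $x_i$ into $J_{i-1}$ (contributing $\ell_i$), the intermediate intervals $J_k$ with $J_{i-1}<J_k<J_i$ in the circular order (each contributing $\int_{J_k}g=\ell_{k+1}$), and the complement of $\coprod_k J_k$ in the arc (contributing its Lebesgue measure). The circular order preserving hypothesis supplies a bijection $k\mapsto k+1$ between $\{k:J_{i-1}<J_k<J_i\}$ and $\{j:J_i<J_j<J_{i+1}\}$, identifying the intermediate contributions with the corresponding pieces of the arc from $x_i$ to $x_{i+1}$; the hypothesis on equality of the Lebesgue measures of the complements handles the remaining term, and both arcs yield the same total.

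The main technical step is uniform $\alpha$--continuity of $g$. On each $J_i$ the function $g$ is smooth, and the mean value theorem gives
\[
|g(u)-g(v)|\le |1-R_i|\cdot \|\xi(R_i)'\|_\infty\cdot \frac{|u-v|}{\ell_i} \le C\,\alpha(\ell_i)\cdot\frac{|u-v|}{\ell_i},
\]
using the hypothesis, which controls $|1-R_i|$ by a constant multiple of $\alpha(\ell_i)$, together with a uniform bound on $\|\xi(R_i)'\|_\infty$. The concavity estimate $|u-v|/\ell_i\le \alpha(|u-v|)/\alpha(\ell_i)$, valid whenever $|u-v|\le \ell_i$, then upgrades this to $|g(u)-g(v)|\le C'\alpha(|u-v|)$. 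For $u,v$ lying in distinct components or one inside and one outside some $J_i$, one uses that $g\equiv 1$ at every $x_i,y_i$ to insert intermediate boundary points, reducing to the intra--interval estimate at the cost of a bounded multiplicative constant.

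The main obstacle I anticipate is the uniform control of $\|\xi(R_i)'\|_\infty$: for $R_i$ close to $0$ the profile $\xi(R_i)$ develops a thin boundary layer and its derivative blows up like $1/R_i$. However, the hypothesis combined with $\sum_i\ell_i\le 1$ forces $R_i$ into a bounded neighborhood of $1$ for all but finitely many indices, and on those finitely many exceptional intervals one can either invoke the estimate directly (since each individual $\xi(R_i)$ is smooth) or replace the profile with an ad hoc smooth bump having the same integral and endpoint values, without affecting the rest of the argument. With $g$ shown to be positive, continuous, uniformly $\alpha$--continuous, and to satisfy the integral identities, Proposition~\ref{p:g-denjoy} yields the desired $C^{1,\alpha}$ diffeomorphism $f$ with $f(J_i)=J_{i+1}$.
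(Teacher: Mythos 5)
Your argument is correct and is essentially the paper's own proof: the paper likewise reduces to Proposition~\ref{p:g-denjoy} by setting $g=1-\sum_i\left(1-\ell_{i+1}/\ell_i\right)\rho_i\left((x-x_i)/\ell_i\right)$ and bounds $[g]_\alpha$ exactly as you do, using $\left|1-\ell_{i+1}/\ell_i\right|\le C\alpha(\ell_i)$ together with the monotonicity of $x/\alpha(x)$, and then handles points outside $\coprod_i J_i$ via $g\equiv 1$ there. The only difference is that the paper takes the $\rho_i$ to be generic bumps with uniformly bounded Lipschitz norms (available since $\ell_{i+1}/\ell_i\to 1$), whereas you use the Yoccoz profile $\xi(R_i)$ of Lemma~\ref{l:Yoccoz}, which forces you to address---as you correctly do---the blow-up of $\|\xi(R)'\|_\infty$ as $R\to 0$ on the finitely many exceptional intervals.
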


The reader will note the appearance of the
conclusion of the fundamental estimate (Lemma~\ref{lem:fundamental}) occurring in the statement of the proposition.

\bp[Proof of Proposition~\ref{p:g-denjoy-prime-1}]
Let $\chi_J$ denote the indicator function of $J\sse S^1$. 
We let $\rho_i$ be an arbitrary smooth function supported on $[0,1]$ such that $\int \rho_i=1$ and such that 
\[
1-(1-\ell_{i+1}/\ell_i)\rho_i(x)>0\]
for all $x$. It is a straightforward exercise to produce such a function $\rho_i$ for each $i$,
and in fact one may assume that $\rho_i$ is bounded
independently of
$i$, since we have that \[\inf_i \frac{\ell_{i+1}}{\ell_i}>0\] by the fundamental estimate (Lemma~\ref{lem:fundamental}).
Define a positive function
\[
g(x):=1 - \sum_i \left(1-\frac{\ell_{i+1}}{\ell_i}\right)\rho_i\left(\frac{x-x_i}{\ell_i}\right).\]
Such functions have been constructed by many authors;
see Section 12.2 of~\cite{KH1995}, and particularly X.3 of~\cite{Herman1979}, for instance.
It is obvious that the hypotheses of Proposition~\ref{p:g-denjoy} are satisfied. 

For distinct points $x,y\in J_i$, we have
\[
\sup_{x,y\in J_i}\frac{|g(x)-g(y)|}{\alpha(x-y)}
= \left(1-\frac{\ell_{i+1}}{\ell_i}\right) 
\sup_{s,t\in [0,1]} \frac{|\rho_i(s)-\rho_i(t)|}{\alpha(\ell_i(s-t))}
\le
 \frac1{\alpha(\ell_i)}\left(1-\frac{\ell_{i+1}}{\ell_i}\right) \|\rho_i\|_{\mathrm{Lip}}.\]
Note that here we used the fact that $x/\alpha(x)$ is monotone increasing. 
It follows that $[g\restriction_{J_i}]_\alpha$ is bounded. Since $g=1$ outside $\coprod_i J_i$, it follows that
$[f']_\alpha=[g]_\alpha$ is bounded.
\ep

The following may be compared with Corollary~\ref{c:stab0} above.

\begin{cor}\label{c:g-denjoy-prime-1}
Let $\alpha$ be a concave modulus.
If a positive sequence $\{\ell_i\}_{i\in\bZ}$ satisfies that $\sum_i\ell_i\le 1$
and that
\[\sup_i \frac1{\alpha(\ell_i)}\left(1-\frac{\ell_{i+1}}{\ell_i}\right)<\infty,\]
then there exists an exceptional $C^{1,\alpha}$ diffeomorphism $f$ of $S^1$ with a wandering interval $J\sse S^1$,
and such that $|f^i(J)|=\ell_i$
 for all $i$.
\end{cor}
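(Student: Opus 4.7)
The plan is to reduce Corollary~\ref{c:g-denjoy-prime-1} to Proposition~\ref{p:g-denjoy-prime-1} by arranging the intervals $\{J_i\}$ inside $S^1$ as a blow-up of an irrational rotation orbit in the sense of Example~\ref{ex:irr}. Since Proposition~\ref{p:g-denjoy-prime-1} carries out all the genuinely analytic work, the only real tasks are to position the intervals correctly and to verify the equi-Lebesgue complement hypothesis.

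Concretely, I would fix any irrational $\theta\in\bR\setminus\bQ$, let $L:=\sum_i\ell_i\in(0,1]$, and define the measure
\[\mu:=(1-L)\lambda+\sum_{i\in\bZ}\ell_i\,\delta_{\langle i\theta\rangle}\]
on $S^1$, setting $x_i:=\mu[0,\langle i\theta\rangle)$ and $J_i:=[x_i,x_i+\ell_i]$. By Example~\ref{ex:irr}, $\{J_i\}_{i\in\bZ}$ is then a disjoint, circular order preserving family of compact intervals with $|J_i|=\ell_i$. Of the two hypotheses of Proposition~\ref{p:g-denjoy-prime-1}, the derivative-ratio estimate is exactly the hypothesis of the corollary. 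For the equi-Lebesgue complement condition, note that by the definition of $\mu$,
\[x_i-x_{i-1}=\mu\!\left([\langle(i-1)\theta\rangle,\langle i\theta\rangle)\right)=(1-L)\langle\theta\rangle+\sum_{k:\,\langle k\theta\rangle\in[\langle(i-1)\theta\rangle,\langle i\theta\rangle)}\ell_k,\]
where the arc is taken in the forward direction of rotation by $\theta$ and therefore has Lebesgue length $\langle\theta\rangle$ independently of $i$. Subtracting off the Dirac contributions, which account exactly for the total length of the $J_k$ strictly inside $[x_{i-1},x_i]$, leaves a Lebesgue mass of $(1-L)\langle\theta\rangle$ in the complement of $\coprod_k J_k$, independent of $i$.

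Having verified the hypotheses, I would apply Proposition~\ref{p:g-denjoy-prime-1} to produce $f\in\Diff_+^{1,\alpha}(S^1)$ with $f(J_i)=J_{i+1}$ for all $i$. Iterating yields $f^i(J_0)=J_i$, hence $|f^i(J_0)|=\ell_i$; and since the $J_i$ are pairwise disjoint, $J:=J_0$ is a wandering interval for $f$, so $f$ is exceptional. (When $L<1$ the complement of $\coprod_i J_i$ is automatically a Cantor set on which $f$ is minimal; the borderline case $L=1$ is a degenerate situation that may be sidestepped by a harmless rescaling of $\{\ell_i\}$ prior to carrying out the construction.) The only step requiring any argument is the equi-Lebesgue verification, and as indicated this is essentially just the translation invariance of Lebesgue measure on the pre-blow-up circle.
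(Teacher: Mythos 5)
Your argument is correct and is essentially the paper's own proof: the paper deduces the corollary directly from Example~\ref{ex:irr} and Proposition~\ref{p:g-denjoy-prime-1}, and your equi-Lebesgue computation (both complements have mass $(1-L)\form{\theta}$) is exactly the detail left implicit there. The only slip is the parenthetical rescaling when $L=1$, which would alter the lengths and so not give $|f^i(J)|=\ell_i$ exactly; but no fix is needed, since Example~\ref{ex:irr} allows $L=1$ and Proposition~\ref{p:g-denjoy-prime-1} applies verbatim when the complement is Lebesgue-null.
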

\bp
The corollary is a simple consequence of Example~\ref{ex:irr} and Proposition~\ref{p:g-denjoy-prime-1}.
\ep

\subsection{The proof of the Theorem}
We can now  combine the observations of the previous subsection to provide a  mostly  self-contained proof:
\begin{proof}[Proof of Theorem~\ref{t:int-alpha}]
As follows from the work of Medvedev~\cite{Medvedev2001}, we lose no generality with the assumption that $\alpha$ is smooth.
We put
$
K:=\max (2,1/\alpha(1))$, and set
\[
v(x):= x^2\alpha(1/x).\]
For all $t\ge1$, we have
\[v(x/t) =(x^2/t^2) \cdot \alpha(t/x) \ge (x^2/t^2) \cdot \alpha(1/x) = v(x)/t^2.\]
Since $x/\alpha$ is monotone increasing, 
we have
\[
(v(x)/x)' = ( \alpha(1/x)/ (1/x) )' \ge0.\]
In particular, whenever $x\ge K$ we have
\[v(x)\ge x\cdot v(1)/1 \ge x/K.\]

We also note from the estimate
\[
0\ge (\alpha(x)/x)' = (x\alpha(x)'-\alpha(x))/x^2,\]
we have that $0<x\alpha'(x)\le \alpha(x)$ for all $x$. So, we get
\[
v(x)= x^2\alpha(1/x) \ge 
 x\alpha'(1/x) >0.\]
For all $x\ge K$, we obtain that
 \[xv'(x) = x( 2x\alpha(1/x)-\alpha'(1/x))=2v(x) - x\alpha'(1/x) \in [v(x),2v(x)].\]

We now set \[ \ell_i:= 1/ v(|i|+K)\] for all $i\in\bZ$.
Since \[\int_K^\infty 1/v = \int_0^{1/K} 1/\alpha<\infty,\]
we see that  $\sum_i \ell_i\le 1$, possibly increasing $K$ if necessary.

Let $i\in\bZ$ and set $j=|i|$. 
Note that 
\[
v(j+K)\ge (j+K)/K,\]
and that
\[ v(j+K\pm1) \ge v((j+K)/2)\ge v(j+K)/4.\]
For some $y_0$ between $j+K$ and $j+K\pm1$, we have
\begin{align*}
\frac{1}{\alpha(\ell_j)}
\abs*{1-\frac{\ell_{j+1}}{\ell_j}}
&=\frac{1}{\ell_j^2 v(1/\ell_j)}
\abs*{1-\frac{v(j+K)}{v(j+K\pm1)}}\\
&=
\frac{v(j+K)^2}{v\circ v(j+K)}\cdot
\frac{v'(y_0)}{v(j+K\pm1)} =
\frac{v(j+K)}{v\circ v(j+K)}
\cdot \frac{v(j+K)}{v(j+K\pm1)}\cdot
v'(y_0)\\
&\le
\frac{j/K+1}{v(j/K+1)}\cdot 4 \cdot \frac{2 v(y_0)}{y_0}\le
\frac{j/K+1}{v(2j+2K)/(2K)^2}\cdot \frac{8 v(y_0)}{y_0}\\
&=
32K \frac{j+K}{y_0}\cdot \frac{ v(y_0)}{v(2j+2K)}\le 64K.
\end{align*}

It follows that the quantities \[\frac{1}{\alpha(\ell_j)}
\abs*{1-\frac{\ell_{j+1}}{\ell_j}}\] are uniformly  bounded independently of $j$, which the reader may compare with the claim in the
proof of Theorem~\ref{t:c1al}.
Note that the conditions of Corollary~\ref{c:g-denjoy-prime-1} are now met.

By choosing $L=\sum_i\ell_i\approx0$ in the proof of Proposition~\ref{p:g-denjoy-prime-1}, we may require that 
\[
\|f\|_{1}=\max( \|f-T(\theta)\|, \|f'-1\|)\]
is as small as desired. To see that $\|f-T(\theta)\|\to 0$ as $L\to 0$, we may simply choose the index shift $K$ to be very large, whence
$f$ will converge to rotation by $\theta$. To see that $ \|f'-1\|\to 0$, we note that $ \|f'-1\|$ vanishes if $x\notin J_i$ for some $i$, and
 is equal to \[\left(1-\frac{\ell_{i+1}}{\ell_i}\right)\rho_i\left(\frac{x-x_i}{\ell_i}\right)\] for $x\in J_i$. Since $\rho_i$ is bounded independently of
 $i$ and since $1-\ell_{i+1}/\ell_i\to 0$ as $i\to\infty$ by the fundamental estimate, we see that $\|f'-1\|$ indeed tends to $1$.
\end{proof}

\section*{Acknowledgements}

The authors are grateful to B. Hayes, N. Kang, Y. Lodha, and M. Triestino for helpful comments on an earlier draft of this paper.
The authors thank an  anonymous referee for comments which improved the exposition in  the paper.
The first author is supported by a KIAS Individual Grant (MG073601) at Korea Institute for Advanced Study and by the
National Research Foundation (2018R1A2B6004003).
The second author is partially supported 
by an Alfred P. Sloan Foundation Research Fellowship, and by NSF Grant DMS-1711488. The second author is grateful to the Korea
Institute for Advanced Study for its hospitality while this research was completed.


\providecommand{\bysame}{\leavevmode\hbox to3em{\hrulefill}\thinspace}
\providecommand{\MR}{\relax\ifhmode\unskip\space\fi MR }
\providecommand{\MRhref}[2]{%
  \href{http://www.ams.org/mathscinet-getitem?mr=#1}{#2}
}
\providecommand{\href}[2]{#2}


\begin{thebibliography}{10}

\bibitem{athanassopoulos}
K.~Athanassopoulos, \emph{Denjoy {$C^1$} diffeomorphisms of the circle and
  {M}c{D}uff's question}, Expo. Math. \textbf{33} (2015), no.~1, 48--66.
  \MR{3310927}

\bibitem{Bohl1916}
P.~Bohl, \emph{\"{U}ber die {H}insichtlich der {U}nabh\"{a}ngigen und
  {A}bh\"{a}ngigen {V}ariabeln {P}eriodische {D}ifferentialgleichung {E}rster
  {O}rdnung}, Acta Math. \textbf{40} (1916), no.~1, 321--336. \MR{1555141}

\bibitem{BLD}
E.~Breuillard and E.~Le~Donne, \emph{On the rate of convergence to the
  asymptotic cone for nilpotent groups and sub{F}insler geometry}, Proc. Natl.
  Acad. Sci. USA \textbf{110} (2013), no.~48, 19220--19226. \MR{3153949}

\bibitem{BFH2014}
M.~Bucher, R.~Frigerio, and T.~Hartnick, \emph{A note on semi-conjugacy for
  circle actions}, Enseign. Math. \textbf{62} (2016), no.~3-4, 317--360.
  \MR{3692890}

\bibitem{CD2003IM}
D.~Calegari and N.~M. Dunfield, \emph{Laminations and groups of homeomorphisms
  of the circle}, Invent. Math. \textbf{152} (2003), no.~1, 149--204.
  \MR{1965363 (2005a:57013)}

\bibitem{CKK2019}
J.~{Chang}, S.~{Kim}, and T.~{Koberda}, \emph{{Algebraic Structure of
  Diffeomorphism Groups of One-Manifolds}}, arXiv e-prints (2019),
  arXiv:1904.08793.

\bibitem{CNP13}
D.~Coronel, A.~Navas, and M.~Ponce, \emph{On bounded cocycles of isometries
  over minimal dynamics}, J. Mod. Dyn. \textbf{7} (2013), no.~1, 45--74.
  \MR{3071465}

\bibitem{dlHarpe2000}
P.~de~la Harpe, \emph{Topics in geometric group theory}, Chicago Lectures in
  Mathematics, University of Chicago Press, Chicago, IL, 2000. \MR{1786869
  (2001i:20081)}

\bibitem{Denjoy1932}
A.~Denjoy, \emph{Sur la continuit{\'e} des fonctions analytiques
  singuli{\`e}res}, Bull. Soc. Math. France \textbf{60} (1932), 27--105.
  \MR{1504984}

\bibitem{DKN2007}
B.~Deroin, V.~Kleptsyn, and A.~Navas, \emph{Sur la dynamique unidimensionnelle
  en r{\'e}gularit{\'e} interm{\'e}diaire}, Acta Math. \textbf{199} (2007),
  no.~2, 199--262. \MR{2358052}
  
  

\bibitem{FF2003}
B.~Farb and J.~Franks, \emph{Groups of homeomorphisms of one-manifolds. {III}.
  {N}ilpotent subgroups}, Ergodic Theory Dynam. Systems \textbf{23} (2003),
  no.~5, 1467--1484. \MR{2018608 (2004k:58013)}

\bibitem{Ghys1999}
{\'E}.~Ghys, \emph{Actions de r\'eseaux sur le cercle}, Inventiones Math.
  \textbf{137} (1999), no.~1, 199--231. \MR{1703323 (2000j:22014)}

  
\bibitem{Hall81}
G.~R. Hall, \emph{A {$C^{\infty }$} {D}enjoy counterexample}, Ergodic Theory
  Dynamical Systems \textbf{1} (1981), no.~3, 261--272 (1982). \MR{662469}

\bibitem{Herman1979}
M.~Herman, \emph{Sur la conjugaison differentiable des diffeomorphismes du
  cercle a des rotations}, Inst. Hautes {\'E}tudes Sci. Publ. Math. No. 49
  (1979), 5--233.

\bibitem{HuSu1997}
J.~Hu and D.~P. Sullivan, \emph{Topological conjugacy of circle
  diffeomorphisms}, Ergodic Theory Dynam. Systems \textbf{17} (1997), no.~1,
  173--186. \MR{1440774}

\bibitem{Jorquera}
E.~Jorquera, \emph{A universal nilpotent group of {$C^1$} diffeomorphisms of
  the interval}, Topology Appl. \textbf{159} (2012), no.~8, 2115--2126.
  \MR{2902746}

\bibitem{KH1995}
A.~Katok and B.~Hasselblatt, \emph{Introduction to the modern theory of
  dynamical systems}, Encyclopedia of Mathematics and its Applications,
  vol.~54, Cambridge University Press, Cambridge, 1995, With a supplementary
  chapter by Katok and Leonardo Mendoza. \MR{1326374 (96c:58055)}

\bibitem{KKM2019}
S.~Kim, T.~Koberda, and M.~Mj, \emph{Flexibility of group actions on the
  circle}, Lecture Notes in Mathematics, vol. 2231, Springer, Cham, 2019.
  \MR{3887602}

\bibitem{mann-book}
A.~Mann, \emph{How groups grow}, London Mathematical Society Lecture Note
  Series, vol. 395, Cambridge University Press, Cambridge, 2012. \MR{2894945}

\bibitem{Mattila1995}
P.~Mattila, \emph{Geometry of sets and measures in {E}uclidean spaces},
  Cambridge Studies in Advanced Mathematics, vol.~44, Cambridge University
  Press, Cambridge, 1995, Fractals and rectifiability. \MR{1333890}

\bibitem{McDuff1981}
D.~McDuff, \emph{{$C^{1}$}-minimal subsets of the circle}, Ann. Inst. Fourier
  (Grenoble) \textbf{31} (1981), no.~1, vii, 177--193. \MR{613034}

\bibitem{Medvedev2001}
A.~V. Medvedev, \emph{On a concave differentiable majorant of a modulus of
  continuity}, Real Anal. Exchange \textbf{27} (2001/02), no.~1, 123--129.
  \MR{1887686}

\bibitem{Navas2008GAFA}
A.~Navas, \emph{Growth of groups and diffeomorphisms of the interval}, Geom.
  Funct. Anal. \textbf{18} (2008), no.~3, 988--1028. \MR{2439001}

\bibitem{Navas2011}
\bysame, \emph{Groups of circle diffeomorphisms}, Chicago
  Lectures in Mathematics, University of Chicago Press, Chicago, IL, 2011.
  \MR{2809110}

\bibitem{Stoll1998}
M.~Stoll, \emph{On the asymptotics of the growth of {$2$}-step nilpotent
  groups}, J. London Math. Soc. (2) \textbf{58} (1998), no.~1, 38--48.
  \MR{1666070}

\bibitem{Sullivan1992}
D.~Sullivan, \emph{Bounds, quadratic differentials, and renormalization
  conjectures}, American {M}athematical {S}ociety centennial publications,
  {V}ol. {II} ({P}rovidence, {RI}, 1988), Amer. Math. Soc., Providence, RI,
  1992, pp.~417--466. \MR{1184622}

\bibitem{Tsuboi1995}
T.~Tsuboi, \emph{Homological and dynamical study on certain groups of
  {L}ipschitz homeomorphisms of the circle}, J. Math. Soc. Japan \textbf{47}
  (1995), no.~1, 1--30. \MR{1304186}


\end{thebibliography}
\end{document}